\documentclass[10pt]{amsart}

\usepackage{amsmath}
\usepackage{amssymb}
\usepackage{bm}
\usepackage{graphicx}
\usepackage{color}
\definecolor{wine}{rgb}{0.5,0,0.15}

\usepackage{hyperref}
\hypersetup{colorlinks=true, linkcolor=blue, citecolor=magenta, urlcolor=wine}
\usepackage{url}
\usepackage{algpseudocode}
\usepackage{fancyhdr}
\usepackage{mathtools}
\usepackage{tikz-cd}
\usepackage{xy}
\input xy
\xyoption{all}
\usepackage{stmaryrd}
\usepackage{calrsfs}
\usepackage{enumitem}

\newtheorem*{maintheorem*}{Main Theorem}
\newtheorem{theorem}{Theorem}[section]
\newtheorem*{theorem*}{Main Theorem}

\newtheorem{prop}[theorem]{Proposition}

\newtheorem{lemma}[theorem]{Lemma}
\newtheorem{cor}[theorem]{Corollary}

\theoremstyle{definition}
\newtheorem{definition}[theorem]{Definition}

\newtheorem{example}[theorem]{Example}
\numberwithin{equation}{section}

\newcommand{\aaa}{\mathbb{A}}
\newcommand{\cc}{\mathbb{C}}
\newcommand{\ff}{\mathbb{F}}
\newcommand{\nn}{\mathbb{N}}
\newcommand{\pp}{\mathbb{P}}
\newcommand{\qq}{\mathbb{Q}}
\newcommand{\Q}{\mathbb{Q}}
\newcommand{\rr}{\mathbb{R}}
\newcommand{\R}{\mathbb{R}}
\newcommand{\sss}{\mathbb{S}}
\newcommand{\zz}{\mathbb{Z}}
\newcommand{\Z}{\mathbb{Z}}

\newcommand{\Za}{\mathbb{Z}[\alpha]}

\newcommand{\uu}{\mathcal{U}}

\providecommand\ldb{\llbracket}
\providecommand\rdb{\rrbracket}

\newcommand{\gp}{\mathcal{G}}
\newcommand{\qf}{\text{qf}}
\newcommand{\supp}{\text{supp} \, }

\newcommand{\OK}{\mathcal{O}_K} 
\newcommand{\mfp}{\mathfrak{p}}
\keywords{Algebraic monogenic semidomains, nonunique factorization,
atomicity, finite factorization, unit groups, Perron numbers}

\DeclareMathOperator{\rank}{rank}

\subjclass[2020]{Primary 11R04, 16Y60, 11R27, Secondary 13F15, 20M13, 11R06}

\begin{document}
\raggedbottom
	
\mbox{}
\title{Factorizations in algebraic monogenic semidomains}

\author{Jiya Dani}
\address{Department of Mathematics\\MIT\\Cambridge, MA 02139}
\email{jiyadani@mit.edu}

\author{Felix Gotti}
\address{Department of Mathematics\\MIT\\Cambridge, MA 02139}
\email{fgotti@mit.edu}

\author{Bryan Li}
\address{Department of Mathematics\\Yale University\\New Haven, CT 06511}
\email{bryan.li.bl846@yale.edu}

\author{Arav Paladiya}
\address{Department of Mathematics\\UC Berkeley\\Berkeley, CA 94720}
\email{aravp@berkeley.edu}

\date{\today}

\begin{abstract}
	We study the additive and multiplicative arithmetic of algebraic monogenic semidomains of the form $S_\alpha$. We first relate their structure to the conjugates of $\alpha$, characterize when $S_\alpha$ is a ring, and determine when its additive monoid is free. We then investigate units and prove a Dirichlet-type theorem showing that $S_\alpha^\times$ is the product of a finite cyclic group of roots of unity and a finitely generated free abelian group; in particular, the unit group is free abelian when $\alpha$ is positive. We also give a criterion that produces non-atomic algebraic monogenic semidomains. Turning to finiteness properties, we prove that $\mathbb{Z}[\alpha]$ is a finite factorization domain for every algebraic number $\alpha$, establish criteria ensuring that $S_\alpha$ is a finite factorization semidomain, and construct infinitely many positive cubic generators yielding finite factorization semidomains that are not rings. Finally, we give a membership criterion for rational elements of $\mathbb{Z}[\beta]$ for arbitrary algebraic $\beta$, and prove that, for algebraic integers $\alpha$, unique factorization ascends from $\mathbb{Z}[\alpha]$ to $\mathbb{Z}[\alpha/n]$ for every $n\in\mathbb{N}$.
\end{abstract}

\bigskip
\maketitle

\section{Introduction}
\label{sec:intro}

Classical factorization theory developed primarily in rings of integers, Dedekind domains, and closely related classes of integral domains. The paper of Anderson, Anderson, and Zafrullah~\cite{AAZ90}, published in 1990, marked the beginning of more than three decades of systematic study of factorization theory in the class of integral domains. During this period, the study of factorization continue expanding to more general areas such as commutative rings with zero-divisors and monoids that are not necessary commutative or cancellative. 
However, in the general setting of semidomains, factorization has been studied only sporadicaly, with in the past three decades. Among the notable exceptions, Cesarz, Chapman, McAdam, and Schaeffer studied elasticity in semirings defined by positive systems, including the polynomial semidomain $\rr_{\ge 0}[x]$ and certain algebraic monogenic semidomains generated by quadratic numbers~\cite{CCMS09}, while Ponomarenko investigated atomicity and finiteness properties in commutative semigroup semirings~\cite{vP15}.
\smallskip

A subsemiring of a field is called monogenic provided that it can be generated by only one element over its prime subsemiring. We let $S_\alpha$ denote the monogenic semiring generated by a complex number $\alpha$ inside $\cc$, that is,
\[
	S_\alpha := \{f(\alpha):f(x)\in\nn_0[x]\},
\]
where $\nn_0[x]$ denotes the polynomial semiring with nonnegative integer coefficients. The simultaneous appearance in 2019 of two complementary papers on monogenic semidomains has motivated an active and systematic investigation of the algebraic, arithmetic, factorization, and ideal-theoretic aspects of such semirings.On the additive side, Chapman, Gotti, and Gotti studied the arithmetic of lengths and other factorization invariants of the additive monoids underlying rational monogenic semidomains~\cite{CGG20}. Their work motivated the systematic study of the additive structure of algebraic monogenic semidomains initiated by Correa-Morris and Gotti~\cite{CG22} and continued from a valuation-theoretic perspective by Chen, Gotti, Lu, and Yao~\cite{CGLY26}. Related contributions include the work of Jiang, Li, and Zhu on primality and elasticity~\cite{JLZ23}, that of Ajran, Bringas, Li, Singer, and Tirador on additive factorizations in evaluation semirings~\cite{ABLST23}, and that most recent of Dani, Deng, Gotti, Li, Paladiya, Vulakh, and Zeng on atoms and strong atoms~\cite{DDHLPV25}.
\smallskip

On the multiplicative side, Campanini and Facchini gave a systematic treatment of factorization and ideal-theoretic properties of the polynomial semiring $\nn_0[x]$~\cite{CF19}. Note that $\nn_0[x]$ is isomorphic to the monogenic semidomain $S_\tau$ for every transcendental number $\tau \in \cc$. Motivated by their work, Gotti and Polo investigated the ascent of atomicity and standard factorization properties to polynomial and Laurent-polynomial semidomains~\cite{GP23}; they subsequently studied the ascent of several weaker, subatomic properties in the same setting~\cite{GPP23}. More recently, Gonzalez, Gotti, and Panpaliya considered the ascent of almost atomicity and quasi-atomicity to polynomial and monoid semidomains~\cite{GGP25}.
\smallskip

The Bi-UF Positive Conjecture, posed by Baeth, Chapman, and Gotti in their study of bi-atomic classes of positive semirings~\cite{BCG21}, has provided another important motivation for the factorization theory of semidomains. The conjecture asserts that $\nn_0$ is the only subsemiring of $\rr_{\ge 0}$ whose additive and multiplicative monoids are both factorial. After several years with little progress, a number of recent results have established the conjecture for increasingly broad classes of monogenic semidomains. Deng, Gotti, and Zeng proved the rational case as part of their classification of factorial rational monogenic semidomains~\cite{DGZ26}. Gotti, Graia, Han, and Liang then settled the case of quadratic algebraic generators~\cite{GGHL26}. Most recently, Bilakanti, Gotti, Kandasamy, Liang, Liu, Polo, Yang, and Yao proved the conjecture for finitely generated algebraic positive semidomains and obtained a corresponding reduction theorem for finitely generated algebraic semidomains in the complex plane~\cite{BGKLPYY26}.
\smallskip

The first systematic study of multiplicative factorizations in rational monogenic semidomains was carried out by Deng, Gotti, and Zeng~\cite{DGZ26}. Among other results, they proved that, for $q\in\qq_{>0}$, the semidomain $S_q$ is factorial if and only if
\[
	q\in\nn\cup\nn^{-1}.
\]
More generally, within the class of rational monogenic semidomains, factoriality, half-factoriality, and the Krull property are equivalent.
\smallskip

The primary purpose of the present paper is to initiate a systematic study of multiplicative factorizations in algebraic monogenic semidomains. We begin with structural results relating $S_\alpha$ to the conjugates of~$\alpha$, characterize when $S_\alpha$ is a ring, and determine when its additive monoid is free. We then study the unit group and prove a Dirichlet-type theorem: for every algebraic number~$\alpha$, the group $S_\alpha^\times$ is the direct product of a finite cyclic group of roots of unity and a finitely generated free abelian group. We next turn to atomicity and give a criterion producing algebraic monogenic semidomains whose multiplicative monoids are not atomic. The analogous question for rational monogenic semidomains was posed in~\cite{DGZ26}.\footnote{According to ongoing work of Cen et al., the analogous rational question has since been answered affirmatively.}
\smallskip

Finally, we investigate finiteness and uniqueness properties of multiplicative factorizations. We prove that $\zz[\alpha]$ is a finite factorization domain for every algebraic number~$\alpha$; consequently, every algebraic monogenic semidomain that is a ring is a finite factorization semidomain. We also establish criteria ensuring that $S_\alpha$ has the finite factorization property and construct infinitely many positive cubic generators for which $S_\alpha$ is a finite factorization semidomain but not a ring. We conclude with a rational-membership test for $\zz[\beta]$ for arbitrary algebraic $\beta$ and prove that, whenever $\alpha$ is an algebraic integer and $\zz[\alpha]$ is a UFD, every $\zz[\alpha/n]$ is a UFD.

\bigskip
\section{Preliminaries}
\label{sec:prelim}

In this section, we collect the notation and terminology used throughout the paper. Our conventions concerning commutative monoids and factorization theory are standard; see~\cite{GH06} for a comprehensive treatment.

\smallskip
\subsection{General Notation}

We let $\pp$ denote the set of rational primes, while we respect all the standard notations $\zz$, $\qq$, $\overline{\mathbb{Z}}$, $\rr$, and $\cc$ to refer to the rational integers, rational numbers, algebraic integers, real numbers, and complex numbers. We also adopt the standard notation $\ff_{q}$ to refer to the finite field of $q$ elements, where $q$ is a prime power. Lastly, it will be notationally convenient for us to let $\aaa$ denote the ring consisting of all algebraic numbers. Also, for $a,b \in \zz$ with $a \le b$, we will adopt the notation
\[
	\ldb a,b\rdb := \{n \in \zz : a \le n \le b \}
\]
for the discrete interval from $a$ to $b$. If $X\subseteq\rr$ and $r \in \rr$ then $X_{\ge r} := \{x \in X : x \ge r\}$ and, in a similar manner, we define the notation $X_{>r}$ and $X_{\le r}$. 
For a prime number $p$ and a nonzero integer $a$, the $p$-adic valuation $\nu_p(a)$ is the largest $e\in\nn_0$ such that $p^e$ divides~$a$.
\smallskip


\medskip
\subsection{Integral Domains, Orders, and Rings of Integers} 

For an integral domain $R$, we denote its field of fractions by $F(R)$ and its group of units by $R^\times$. An \emph{overring} of $R$ is a ring $T$ satisfying $R \subseteq T \subseteq \qf(R)$. 
A \emph{Dedekind domain} is a Noetherian, integrally closed domain in which every nonzero prime ideal is maximal; equivalently, every nonzero fractional ideal is a unique finite product of nonzero prime ideals with integer exponents. 
Following standard notation, we write $\operatorname{Spec}(R)$ for the set of prime ideals of~$R$.

We let $\mu(K)$ denote the set consisting of all roots of unity contained in~$K$. Observe that $\mu(K)$ is a subgroup of $K^\times$. 
Let $K$ be a number field and let $\OK$ denote its ring of integers.  For $\alpha \in K$, we denote the field norm from $K$ to~$\qq$ by $N_{K/\qq}(\alpha)$. If $K$ has $r_1$ real embeddings and $2r_2$ nonreal embeddings into~$\cc$ then $(r_1,r_2)$ is the \emph{signature} of~$K$. 

An \emph{order} in $K$ is a subring~$R$ of $\mathcal O_K$ containing~$1$ such that $R$ is a free $\zz$-module of rank $[K:\qq]$; equivalently, $R$ has finite index in $\mathcal O_K$.
\smallskip

Let $S$ be a finite set of prime ideals of $\OK$. An element $\alpha \in K$ is called an $S$\emph{-unit} if the principal fractional ideal $\alpha \OK$ is in the group generated by the prime ideals in~$S$. For the ring of rational integers $\zz$ one may take $S$ to be a finite set of prime numbers and define an $S$-unit to be a rational number whose numerator and denominator have all prime divisors in~$S$. It is well known that $\OK^\times$ is contained in the group of $S$-units. 
We say that a nonzero prime ideal $\mfp$ of $\OK$ is a \emph{denominator prime} of $\theta \in K^\times$ if $\nu_\mfp (\theta) < 0$. 
\smallskip

An \emph{archimedean place} (or \emph{infinite place}) of $K$ is an equivalence class of embeddings $\sigma \colon K \hookrightarrow \cc$, where two embeddings are identified if they are equal or complex conjugate. A \emph{real place} is represented by a single embedding into $\rr$, whereas a \emph{complex place} consists of a pair of nonreal conjugate embeddings $\sigma$ and $\bar\sigma$, where $\bar\sigma(a)=\overline{\sigma(a)}$ for $a\in K$. Thus, if $K$ has signature $(r_1,r_2)$, then it has exactly $r_1+r_2$ archimedean places. The ordinary absolute value $a\mapsto|\sigma(a)|$ does not depend on the representative of the place. For the product formula, we use the normalized value $|a|_v = |\sigma(a)|$ at a real place and $|a|_v = |\sigma(a)|^2$ at a complex place.
\smallskip

\smallskip
\subsection{Atomicity and Factorization}

Throughout this paper, the monoids considered in factorization theory are assumed to be commutative and cancellative. The multiplicative monoid of a semiring including its zero is exempt from the cancellativity convention. We normally write a monoid multiplicatively, with identity~$1$. Its group of units is denoted by $M^\times$. Two elements $a,b\in M$ are \emph{associates} if $a=ub$ for some $u\in M^\times$. We say that $a$ \emph{divides} $b$ in~$M$, and write $a\mid_M b$, if $b=ac$ for some $c\in M$. The monoid $M$ is \emph{reduced} if $M^\times=\{1\}$. When a monoid is written additively, its identity is denoted by~$0$, its group of units by $\uu(M)$, and the preceding definitions are interpreted additively. An element $a \in M\setminus M^\times$ is an \emph{atom} if $a=bc$ for some $b,c \in M$ implies that either $b$ or $c$ is a unit. The set of atoms of $M$ is denoted by $\mathcal A(M)$. The monoid $M$ is \emph{atomic} if every nonunit is a finite product of atoms, and it is \emph{antimatter} if it has no atoms. An element $p\in M\setminus M^\times$ is \emph{prime} if $p\mid_M ab$ implies that $p\mid_M a$ or $p \mid_M b$.
\smallskip

Let $M_{\mathrm{red}}:=M/M^\times$ be the reduced quotient of~$M$, and let $\mathsf Z(M)$ be the free commutative monoid on $\mathcal A(M_{\mathrm{red}})$. The natural homomorphism $\pi \colon \mathsf Z(M) \rightarrow M_{\mathrm{red}}$ is called the \emph{factorization homomorphism}. For $b \in M$, the set $\mathsf Z_M(b):=\pi^{-1}(bM^\times)$ is the set of factorizations of~$b$. An atomic monoid $M$ is a \emph{unique factorization monoid} (UFM) if $|\mathsf Z_M(b)|=1$ for every $b \in M$, a \emph{finite factorization monoid} (FFM) if $|\mathsf Z_M(b)|<\infty$ for every $b \in M$. A useful characterization that we repeatedly use is that $M$ is an FFM if and only if every element of~$M$ has only finitely many divisors up to associates~\cite{fHK92}.
\smallskip

If $z \in\mathsf Z(M)$ is a product of $\ell$ atoms of $M_{\text{red}}$ (counting repetitions) then $|z| := \ell$ is called the \emph{length} of the factorization~$z$. In addition, for each $b \in M$, the set
\[
	\mathsf L_M(b) := \{|z|:z\in\mathsf Z_M(b)\}
\]
is called the \emph{set of lengths} of~$b$. The monoid $M$ is called a \emph{bounded factorization monoid} (BFM) if $|\mathsf L_M(b)|<\infty$ for every $b \in M$. Both implications in the following diagram
\[
	\textbf{UFM} \ \Longrightarrow \ \textbf{FFM} \ \Longrightarrow \ \textbf{BFM}.
\]
are immediate consequences of the mentioned definitions.

A subset $I\subseteq M$ is an \emph{ideal} of~$M$ if $IM\subseteq I$; because $1 \in M$, this is equivalent to $IM=I$. An ideal is \emph{principal} if it has the form $aM$ for some $a\in M$. We say that $M$ satisfies the \emph{ascending chain condition on principal ideals} (ACCP) if every ascending chain of principal ideals eventually stabilizes. It is no difficult to argue that every BFM satisfies the ACCP and every monoid that satisfies the ACCP is atomic. Every monoid satisfying the ACCP is atomic. The monoid $M$ is said to be a \emph{valuation monoid} if its divisibility relation is total, that is, if $a \mid_M b$ or $b \mid_M a$ for all $a,b \in M$.
\smallskip

The \emph{Grothendieck group} $\gp(M)$ of an additively written monoid $M$ is its universal abelian group of differences. For an abelian group $G$, we set $\rank G:=\dim_\qq(\qq\otimes_\zz G)$. The \emph{rank} of $M$ is
\[
	\rank M := \rank\gp(M).
\]
Finally, a commutative monoid is \emph{free on a set $X$} if the natural homomorphism from the monoid of finitely supported functions $X \to \nn_0$ is an isomorphism; equivalently, every element has a unique expression, up to order, as a finite product of elements of~$X$. For an additively written monoid, products are replaced by sums.

\medskip
\subsection{Polynomials}

Let $R$ be a commutative ring with an identity element and let $R[x]$ denote the polynomial extension of $R$. Then let
\[
	f(x) := \sum_{i=0}^d c_i x^i \in R[x]
\]
be a nonzero polynomial with $c_d \neq 0$. We write $\deg f = d$ and $[x^i]f(x) := c_i$. The \emph{support} of~$f$ is
\[
	\supp f(x):=\{i\in\ldb 0,d\rdb:c_i\neq0\}.
\]
For a nonempty finite subset $E\subseteq\nn_0$, the notation $\gcd E$ means the greatest common divisor of its elements, with the convention $\gcd\{0\}=0$.
\smallskip

For a nonzero polynomial $g(x) \in \zz[x]$, the \emph{content} $\operatorname{cont}(g)$ is the positive greatest common divisor of its coefficients. The polynomial $g$ is \emph{primitive} if $\operatorname{cont}(g) = 1$. We use Gauss's lemma in the following standard form: if $g,h\in\zz[x]$, the polynomial $g$ is primitive, and $g$ divides $h$ in $\qq[x]$, then $g$ divides $h$ in $\zz[x]$.
\smallskip

Let $\alpha\in\aaa$. Its \emph{minimal polynomial} over~$\qq$, denoted by $m_\alpha(x)$, is the unique monic irreducible polynomial in $\qq[x]$ having $\alpha$ as a root. The complex roots of $m_\alpha(x)$ are the \emph{conjugates} of~$\alpha$. The number $\alpha$ is an algebraic integer if and only if $m_\alpha(x)\in\zz[x]$. We denote by $\omega_\alpha(x)$ the unique primitive polynomial in $\zz[x]$ with positive leading coefficient that is a rational multiple of $m_\alpha(x)$. There are unique polynomials $p_\alpha,q_\alpha\in\nn_0[x]$ with disjoint supports such that
\[
	\omega_\alpha(x)=p_\alpha(x)-q_\alpha(x).
\]
The pair $(p_\alpha,q_\alpha)$ is called the \emph{minimal pair} of~$\alpha$.
\smallskip

For a nonzero real polynomial, a \emph{variation of sign} is a sign change between consecutive nonzero coefficients. Descartes' rule of signs states that the number of positive real roots, counted with multiplicity, is at most the number of sign variations and differs from it by an even nonnegative integer.

We also use the following form of P\'olya's theorem. If $F\in\rr[X_1,\dots,X_n]$ is homogeneous and strictly positive on the standard simplex
\[
	\Delta_{n-1}:=\big\{(x_1,\dots,x_n) \in \rr_{\ge0}^n : x_1 + \dots+ x_n = 1 \big\},
\]
then, for all sufficiently large $N \in \nn$, the polynomial
\[
	(X_1 + \dots + X_n)^N F(X_1, \dots, X_n)
\]
has strictly positive coefficients~\cite{gP28}.

\medskip
\subsection{Commutative Semirings}

By a \emph{semiring} we mean a set $S$ equipped with binary operations $+$ and $\cdot$ such that the following conditions hold:
\begin{enumerate}
	\item $(S,+)$ is a monoid with identity~$0$,
	\smallskip
	
	\item $(S,\cdot)$ is a commutative semigroup with identity~$1$, and
	\smallskip
	
	\item multiplication distributes over addition. 
\end{enumerate}
Thus, all semirings in this paper are commutative and have an identity. We write $rs$ instead of $r \cdot s$. A semiring is a \emph{ring} if every element has an additive inverse. An \emph{integral domain} is a nonzero commutative ring with no nonzero zero divisors.
\smallskip

The \emph{characteristic} of a semiring $S$, denoted by $\operatorname{char}S$, is the least positive integer $n$ such that $n1=0$, if such an integer exists, and is~$0$ otherwise. The image of the canonical homomorphism $\nn_0\to S$, given by $n\mapsto n1$, is the \emph{prime subsemiring} of~$S$ and is denoted by~$\sss$.

A subset $T\subseteq S$ is a \emph{subsemiring} if $0,1\in T$ and $T$ is closed under addition and multiplication. A map $\varphi\colon S\to T$ between semirings is a \emph{semiring homomorphism} if it preserves $0$, $1$, addition, and multiplication. A bijective semiring homomorphism is a semiring \emph{isomorphism}. An \emph{ideal} of $S$ is an additive submonoid $I\subseteq S$ such that $SI\subseteq I$; the principal ideal generated by $a\in S$ is $aS$.
\smallskip

A semiring $S$ is a \emph{semidomain} if it is isomorphic to a subsemiring of an integral domain. In particular, a semidomain is additively cancellative, satisfies $1\neq0$, and has no nonzero zero divisors. The Grothendieck group of the additive monoid $(S,+)$ carries a unique multiplication extending that of~$S$; the resulting integral domain is denoted by $\mathcal D(S)$ and is called the \emph{Grothendieck domain} of~$S$. The inclusion $S\hookrightarrow\mathcal D(S)$ is universal among semiring homomorphisms from $S$ to integral domains. The \emph{field of fractions} of~$S$ is the field $\qf(\mathcal D(S))$.
\smallskip

For a semidomain $S$, we set $S^* := S\setminus\{0\}$ and regard $S^*$ as a multiplicative monoid. The units of $S$ form the group $S^\times=(S^*)^\times$. We call $S$ \emph{atomic}, a \emph{unique factorization semidomain} (UFS), a \emph{finite factorization semidomain} (FFS), or a \emph{bounded factorization semidomain} (BFS) when $S^*$ has the corresponding monoid property. If $S$ is an integral domain, we use the traditional terms \emph{unique factorization domain} (UFD), \emph{finite factorization domain} (FFD), and \emph{bounded factorization domain} (BFD).
\smallskip

The characteristic of a semidomain agrees with that of its Grothendieck domain and is therefore either $0$ or a prime number. Its prime subsemiring is isomorphic to $\nn_0$ in characteristic~$0$ and to $\ff_p$ in characteristic~$p$.

\begin{definition}
	A semiring $S$ is \emph{monogenic} if there exists $\alpha\in S$ such that $S=\sss[\alpha]$, where $\sss$ is the prime subsemiring of~$S$. The element $\alpha$ is then called a \emph{generator} of~$S$.
\end{definition}

If $R$ is a subsemiring of a commutative ring and $\alpha$ lies in a ring extension of~$R$, then
\[
	R[\alpha] := \{f(\alpha) : f(x) \in R[x]\}.
\]
In particular, for $\alpha \in \cc$ we set $S_\alpha := \{f(\alpha) : f(x) \in \nn_0[x]\}$.

Let $S = \ff_p[\alpha]$ be a monogenic semidomain of characteristic $p \in \pp$. If $\alpha$ is algebraic over $\ff_p$, then $S$ is a finite integral domain and hence a field. If $\alpha$ is transcendental over $\ff_p$, then $S \cong \ff_p[x]$. Therefore every monogenic semidomain of positive characteristic is a UFD. Accordingly, we restrict our attention to characteristic zero.

\begin{prop} \label{prop:monogenic semidomain of char 0 are realizable in the complex plane}
	Let $S$ be a monogenic semidomain of characteristic zero. Then there exists $\alpha \in \cc$ such that $S$ is isomorphic to the semidomain $S_\alpha$.
\end{prop}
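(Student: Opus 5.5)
Since $S$ is monogenic, we can write $S=\sss[\beta]$ for some generator $\beta\in S$. Because $\operatorname{char}S=0$, the prime subsemiring $\sss$ is isomorphic to $\nn_0$. Consider the evaluation map $\varphi\colon \nn_0[x]\to S$ given by $f(x)\mapsto f(\beta)$. It is a surjective semiring homomorphism.

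The plan is to embed $S$ into $\cc$ in such a way that $\beta$ goes to a complex number $\alpha$; the image of $S$ will then be exactly $S_\alpha$. We do this through the Grothendieck domain and its fraction field:
\[
	S\hookrightarrow \mathcal D(S)\hookrightarrow K:=\qf(\mathcal D(S)).
\]
The field $K$ has characteristic zero, since the characteristic of $S$ agrees with that of $\mathcal D(S)$. Moreover, every element of $\mathcal D(S)$ is a difference of two elements of $S$, so $\mathcal D(S)=\zz[\beta]$ and $K=\qq(\beta)$.

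We now split into two cases according to whether $\beta$ is algebraic or transcendental over $\qq$.
\begin{enumerate}
	\item If $\beta$ is algebraic over $\qq$ with minimal polynomial $m(x)$, then $K\cong\qq[x]/(m(x))$. Choosing any complex root $\alpha$ of $m(x)$ gives a field embedding $\iota\colon K\to\cc$ with $\beta\mapsto\alpha$.
	\item If $\beta$ is transcendental over $\qq$, then $K\cong\qq(x)$. Choosing any transcendental $\alpha\in\cc$ (for instance $\alpha=\pi$, which exists because $\cc$ is uncountable while $\aaa$ is countable) gives an embedding $\iota\colon K\to\cc$ with $\beta\mapsto\alpha$.
\end{enumerate}

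In either case, $\iota$ is an injective ring homomorphism, so its restriction to $S$ is an injective semiring homomorphism. Its image is $\{\iota(f(\beta)) : f\in\nn_0[x]\}=\{f(\alpha): f\in\nn_0[x]\}=S_\alpha$, and therefore $S\cong S_\alpha$.

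The only step that needs any care is justifying that $K=\qq(\beta)$ and that $K$ embeds into $\cc$. Both follow from standard field theory once we have $\mathcal D(S)=\zz[\beta]$, which uses the fact that the Grothendieck group of $(S,+)$ consists of differences of elements of $S$. No deep obstacle is expected.
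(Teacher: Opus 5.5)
Your proposal is correct and follows essentially the same route as the paper: embed $S$ into the fraction field of its Grothendieck domain, then send the generator either to a complex root of its minimal polynomial (algebraic case) or to a transcendental complex number (transcendental case), and restrict the resulting field embedding to $S$. Your extra identification $\mathcal D(S)=\zz[\beta]$ and $K=\qq(\beta)$ is valid but not needed, since the paper only uses that $\qq(\beta)$ is a subfield of $K$.
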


\begin{proof}
	Since $S$ has characteristic zero, its prime subsemiring is isomorphic to $\nn_0$. As $S$ is monogenic, there exists $\sigma \in S$ such that $S=S_\sigma$. View $S$ inside its Grothendieck domain $\mathcal{D}(S)$, and let $K$ be the field of fractions of $\mathcal{D}(S)$. Then $\qq(\sigma)$ is a subfield of $K$.

	Suppose first that $\sigma$ is algebraic over $\qq$. Choose a complex root $\alpha$ of the minimal polynomial of $\sigma$ over $\qq$. Then there is a field isomorphism $\varphi \colon \qq(\sigma) \longrightarrow \qq(\alpha)$ that fixes $\qq$ and maps $\sigma$ to $\alpha$. Its restriction to $S=S_\sigma$ is therefore a semiring isomorphism from $S$ onto $S_\alpha$.

	If $\sigma$ is transcendental over $\qq$, choose a transcendental number $\tau \in \cc$. The assignment $\sigma \mapsto \tau$ induces a field isomorphism $\qq(\sigma) \to \qq(\tau)$ fixing~$\qq$, whose restriction to $S$ is a semiring isomorphism from $S$ onto $S_\tau$.
\end{proof}

Proposition~\ref{prop:monogenic semidomain of char 0 are realizable in the complex plane} allows us to study monogenic semidomains of characteristic zero inside the complex plane. If $\tau\in\cc$ is transcendental, then evaluation at~$\tau$ gives an isomorphism $\nn_0[x]\cong S_\tau$. Since the factorization and ideal theory of $\nn_0[x]$ was studied by Campanini and Facchini~\cite{CF19}, we henceforth focus on semidomains $S_\alpha$ generated by algebraic numbers $\alpha\in\cc$.

We conclude with a characterization of those algebraic generators for which $S_\alpha$ is a ring; see also~\cite{GHLL26}.

\begin{prop}\label{prop:monogenic semidomains that are ID}
	For $\alpha\in\aaa$, the following conditions are equivalent.
	\begin{enumerate}
		\item[(a)] $S_\alpha$ is a ring, in which case $S_\alpha=\zz[\alpha]$ is an integral domain.
		\smallskip

		\item[(b)] No conjugate of $\alpha$ belongs to $\rr_{\ge0}$.
	\end{enumerate}
\end{prop}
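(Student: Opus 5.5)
The plan is to reduce both directions to a single claim: $S_\alpha$ is a ring if and only if $-1\in S_\alpha$. Indeed, if $-1 = g(\alpha)$ with $g\in\nn_0[x]$, then for every $f(\alpha)\in S_\alpha$ we have $-f(\alpha) = g(\alpha)f(\alpha)\in S_\alpha$, so $S_\alpha$ is closed under negation. Being a ring containing $\zz$ and $\alpha$ and contained in $\zz[\alpha]$, it then equals $\zz[\alpha]$, which is an integral domain as a subring of $\cc$. The converse is trivial.

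For (a)$\Rightarrow$(b), suppose $-1 = g(\alpha)$ with $g\in\nn_0[x]$. Then $\alpha$ is a root of $g(x)+1$, so $m_\alpha$ divides $g(x)+1$ in $\qq[x]$, and every conjugate $\alpha'$ satisfies $g(\alpha')+1=0$. If some conjugate $\alpha'$ lay in $\rr_{\ge 0}$, then $g(\alpha')+1\ge 1>0$, a contradiction.

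For (b)$\Rightarrow$(a), it suffices to produce $g\in\nn_0[x]$ with $m_\alpha \mid g+1$ in $\qq[x]$, since then $g(\alpha)=-1$. Equivalently, we want some multiple of $\omega_\alpha$ of the form $h\,\omega_\alpha$ with $h\in\qq[x]$ (clearing denominators, one can take $h\in\zz[x]$ and then target $N(g+1)$ for some $N\in\nn$) all of whose coefficients are nonnegative except that the constant term is positive. The cleanest route is to find a polynomial $P\in\zz[x]$, divisible by $\omega_\alpha$, whose coefficients are all strictly positive. Then $P(x) = \sum_i c_i x^i$ with $c_i\ge 1$, and working in $\qq[x]/(m_\alpha)$ gives $\sum_i c_i\alpha^i = 0$, so $-c_0 = \sum_{i\ge1} c_i\alpha^i\in S_\alpha$. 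If $c_0=1$ we are done. In general, I would use $-1 = (c_0-1) + \sum_{i\ge1}c_i\alpha^i$, noting that $c_0-1\ge 0$, so this expression lies in $S_\alpha$.

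The main step, and the expected obstacle, is producing $P$. The hypothesis says $\omega_\alpha$ has no roots in $\rr_{\ge0}$. Since $\omega_\alpha$ is irreducible with positive leading coefficient and no positive real root, it is positive on $[0,\infty)$; note that $\alpha=0$ is excluded by (b). Homogenize it as $F(X,Y)=Y^d\omega_\alpha(X/Y)$. Then $F$ is strictly positive on the simplex $\Delta_1$: at interior points this is positivity of $\omega_\alpha$ on $(0,\infty)$, at $(1,0)$ the value is the leading coefficient, and at $(0,1)$ the value is $\omega_\alpha(0)\ne 0$, which is positive by continuity and the absence of roots in $[0,\infty)$. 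By P\'olya's theorem, $(X+Y)^N F$ has strictly positive coefficients for large $N$. Dehomogenizing with $Y=1$ gives $P(x)=(x+1)^N\omega_\alpha(x)\in\zz[x]$, whose coefficients in every degree $0,\dots,N+d$ are positive integers. This provides the required $P$ and completes the argument.
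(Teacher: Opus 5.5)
Your proposal is correct and follows essentially the same route as the paper. For (a)$\Rightarrow$(b) you evaluate $g+1$ at a nonnegative conjugate, and for (b)$\Rightarrow$(a) you apply P\'olya's theorem to the homogenization of $\omega_\alpha$ to get $(x+1)^N\omega_\alpha(x)$ with positive coefficients, then write $-1=(c_0-1)+\sum_{i\ge1}c_i\alpha^i\in S_\alpha$. Your explicit checks that $-1\in S_\alpha$ forces closure under negation, and that the homogenized form is positive at both endpoints of the simplex, spell out details the paper leaves implicit.
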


\begin{proof}
	Suppose first that $S_\alpha$ is a ring. Then $-1\in S_\alpha$, so there exists $f(x)\in\nn_0[x]$ such that $f(\alpha)=-1$. Hence the polynomial $h(x):=f(x)+1$ belongs to $\nn_0[x]$, has positive constant term, and satisfies $h(\alpha)=0$. Every conjugate $\beta$ of~$\alpha$ is also a root of~$h$. If $\beta\in\rr_{\ge0}$, then $h(\beta)>0$, a contradiction. Thus, no conjugate of~$\alpha$ is nonnegative real.

	Conversely, suppose that no conjugate of $\alpha$ belongs to $\rr_{\ge0}$. Let $\omega_\alpha(x)\in\zz[x]$ be the primitive integer multiple of $m_\alpha(x)$ with positive leading coefficient, and set $d:=\deg\omega_\alpha$. Since $\omega_\alpha$ has no roots in $\rr_{\ge0}$ and has positive leading coefficient, we have $\omega_\alpha(r)>0$ for every $r\in\rr_{\ge0}$. Consider its homogenization
	\[
		\Omega(X,Y):=Y^d\omega_\alpha\bigg( \frac{X}Y\bigg).
	\]
	The polynomial $\Omega$ is strictly positive on the simplex $\{(X,Y)\in\rr_{\ge0}^2:X+Y=1\}$. By P\'olya's theorem, there exists $N\in\nn_0$ such that
	\[
		H(X,Y):=(X+Y)^N\Omega(X,Y)
	\]
	has nonnegative integer coefficients. Setting $Y=1$ gives
	\[
		h(x):=H(x,1)=(x+1)^N\omega_\alpha(x) \in \nn_0[x].
	\]
	Since $h(\alpha)=0$ and $h(0)>0$, we obtain that $-h(0) = (h-h(0))(\alpha)\in S_\alpha$, and so $-1 \in S_\alpha$ because we can write $-1 = -h(0)+(h(0)-1)$. This implies that $S_\alpha$ is a ring. Because $S_\alpha$ is a semidomain, it is then an integral domain, and the inclusion $S_\alpha\subseteq\zz[\alpha]$ is immediate, while the fact that $S_\alpha$ is a ring gives the reverse inclusion. Hence $S_\alpha=\zz[\alpha]$.
\end{proof}

\bigskip
\section{A Few Words on the Additive Structure}
\label{sec:additive structure}

In this section, we consider the underlying additive monoid of monogenic semidomains, and we determine when they are free commutative monoids. We proceed to answer the question of whether the additive monoid of a monogenic semiring of characteristic zero is a free commutative monoid.

\begin{prop}
    Let $S$ be a monogenic semidomain of characteristic zero, identify $S$ with $S_\alpha$ for a chosen generator $\alpha \in \cc$. Theb the following conditions are equivalent.
    \begin{enumerate}
        \item[(a)] $\alpha$ is transcendental.
		\smallskip

        \item[(b)] $(S\,+)\cong(\nn_0[x],+)$.
		\smallskip

        \item[(c)] Every element of $S$ has a unique expression $\sum_{n\ge0}a_n\alpha^n$, where $(a_n)_{n\ge0}$ is a finitely supported family in $\nn_0$.
    \end{enumerate}
\end{prop}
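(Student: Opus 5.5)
I will prove the cycle (a) $\Rightarrow$ (c) $\Rightarrow$ (b) $\Rightarrow$ (a). Throughout, $S$ is identified with $S_\alpha$, so every element of $S$ has the form $f(\alpha)$ for some $f(x)\in\nn_0[x]$. Existence of an expression $\sum_{n\ge0}a_n\alpha^n$ is therefore automatic, and (c) is really a uniqueness statement.

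For (a) $\Rightarrow$ (c), suppose $\alpha$ is transcendental and $f(\alpha)=g(\alpha)$ for some $f,g\in\nn_0[x]$. Then $f-g\in\zz[x]$ vanishes at $\alpha$, so $f=g$, and the coefficient families coincide.

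For (c) $\Rightarrow$ (b), consider the additive map
\[
	\nn_0[x]\to S, \qquad f\mapsto f(\alpha).
\]
It is a monoid homomorphism and is surjective by the definition of $S_\alpha$. Condition (c) says exactly that it is injective, so it is an isomorphism of additive monoids.

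For (b) $\Rightarrow$ (a), I argue by contraposition: assuming $\alpha$ is algebraic, I show that $(S,+)$ is not isomorphic to $(\nn_0[x],+)$. First, $(\nn_0[x],+)$ is free on the countably infinite set $\{x^n:n\ge0\}$, so its Grothendieck group is $\zz[x]$ and its rank is infinite. Any monoid isomorphism induces an isomorphism of Grothendieck groups, so a monoid isomorphic to $(\nn_0[x],+)$ must have infinite rank. Now let $\alpha$ be algebraic of degree $d$. The Grothendieck group $\gp(S)$ embeds in $\cc$ as the group of differences $S-S\subseteq\zz[\alpha]$. Since $\zz[\alpha]\subseteq\qq(\alpha)$, which is a $d$-dimensional $\qq$-vector space, we get $\rank\gp(S)\le d<\infty$. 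This contradicts the infinite rank required by (b).

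The only step needing care is justifying that $\gp(S)$ is the subgroup $S-S$ of $\cc$. This holds because $S$ is a cancellative submonoid of the group $(\cc,+)$, so its universal group of differences is realized inside $\cc$. With that, the rank comparison in the previous paragraph settles (b) $\Rightarrow$ (a), and no deeper input is needed.
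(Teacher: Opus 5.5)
Your proof is correct and is essentially the paper's argument. Both use injectivity of evaluation $\nn_0[x]\to S_\alpha$ for transcendental $\alpha$, and both prove (b) $\Rightarrow$ (a) by comparing the finite rank of the difference group of $(S_\alpha,+)$ with the infinite rank of $\zz[x]$. The only difference is organizational. You close the cycle via (c) $\Rightarrow$ (b), noting that (c) is exactly injectivity of the surjective evaluation map. The paper instead proves (c) $\Rightarrow$ (a) directly: it splits $\omega_\alpha=p_\alpha-q_\alpha$ to exhibit two distinct coefficient families with $p_\alpha(\alpha)=q_\alpha(\alpha)$.
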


\begin{proof}
    (a) $\Rightarrow$ [(b),(c)]: Just notice that evaluation at $\alpha$ is injective on $\nn_0[x]$ when $\alpha$ is transcendental.
	\smallskip

	(b) $\Rightarrow$ (a): The difference group of $(S_\alpha,+)$ is $\zz[\alpha]$. If $\alpha$ is algebraic, this group has finite rank $[\qq(\alpha):\qq]$, whereas the difference group of $(\nn_0[x],+)$ has infinite rank.
	\smallskip

    (c) $\Rightarrow$ (a): Finally, if $\alpha$ is algebraic, split its nonzero primitive integer polynomial as $\omega_\alpha = p-q$ with $p,q \in \nn_0[x]$ and disjoint supports. The distinct coefficient families of $p$ and $q$ give $p(\alpha) = q(\alpha)$, contradicting condition~(c).
\end{proof}

\begin{prop}\label{prop:algebraic-additive-free}
    Let $\alpha \in \aaa$ have degree $d$. Then $(S_\alpha,+)$ is free if and only if
    \[
        m_\alpha(x)=x^d-\sum_{i=0}^{d-1}c_i x^i
    \]
    for some coefficients $c_0, \dots, c_{d-1} \in \nn_0$, in which case, $(S_\alpha,+)$ is free on $\{1,\alpha, \dots, \alpha^{d-1}\}$.
\end{prop}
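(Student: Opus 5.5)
The plan is to prove the two directions separately. The reverse direction is a direct computation. For the forward direction, I will locate the free basis among the powers of $\alpha$ and then read off $m_\alpha$.

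\emph{Sufficiency.} Assume $m_\alpha(x)=x^d-\sum_{i=0}^{d-1}c_ix^i$ with all $c_i\in\nn_0$. Then $\alpha^d=\sum_{i<d}c_i\alpha^i$. By induction on $n$ (multiply by $\alpha$ and substitute for $\alpha^d$), every $\alpha^n$ is an $\nn_0$-combination of $1,\alpha,\dots,\alpha^{d-1}$. Hence $S_\alpha=\sum_{i<d}\nn_0\alpha^i$. These $d$ elements are $\qq$-linearly independent because $\deg m_\alpha=d$, so the representation is unique. Therefore $(S_\alpha,+)$ is free on $\{1,\alpha,\dots,\alpha^{d-1}\}$.

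\emph{Necessity.} Assume $(S_\alpha,+)$ is free on a set $X$.
\begin{enumerate}
\item A free monoid is reduced, and its atoms are exactly the elements of $X$. Its Grothendieck group is $\zz^{(X)}$, while the difference group of $(S_\alpha,+)$ is $\zz[\alpha]$, which has rank $d$. Hence $|X|=d$.
\item Every nonzero element of $S_\alpha$ has the form $f(\alpha)$ with $f\in\nn_0[x]$ nonzero, so it is a sum of powers $\alpha^n$. It follows that every atom is a power of $\alpha$, and thus $X\subseteq\{\alpha^n:n\ge0\}$.
\item I will show that the set of exponents of atoms is an initial segment. Suppose $\alpha^k$ is not an atom. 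Since $\alpha^k\neq 0$ and the monoid is reduced, we can write $\alpha^k=\sum_{i=1}^r\alpha^{n_i}$ with $r\ge2$. Multiplying by $\alpha^j$ gives $\alpha^{k+j}=\sum_i\alpha^{n_i+j}$, a sum of at least two nonzero elements. Hence $\alpha^{k+j}$ is not an atom for any $j\ge0$.
\item Atoms exist because the monoid is free and nonzero. Combined with the previous step, $1$ is an atom, and the atoms are exactly $1,\alpha,\dots,\alpha^{m-1}$ for some $m$. These are pairwise distinct elements of $X$, so $m=|X|=d$. If the powers of $\alpha$ repeated, the set of distinct powers would be finite; this causes no problem because $X$ still consists of the first $d$ powers.
\item Finally, $\alpha^d\in S_\alpha$ is an $\nn_0$-combination of the basis, so $\alpha^d=\sum_{i<d}c_i\alpha^i$ with $c_i\in\nn_0$. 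Thus $x^d-\sum_{i<d}c_ix^i$ is a monic polynomial of degree $d$ vanishing at $\alpha$, and it must therefore equal $m_\alpha$. This also shows that the basis is $\{1,\alpha,\dots,\alpha^{d-1}\}$.
\end{enumerate}

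The main obstacle is step~3, showing that the atoms form an initial segment of powers and in particular that $1$ is an atom. The key point is that a nontrivial decomposition of $\alpha^k$ propagates to all higher powers after multiplying by $\alpha^j$. This uses only that $\alpha$ is nonzero, which holds unless $\alpha=0$; in that degenerate case $d=1$ and $m_\alpha=x$, which has the required form.
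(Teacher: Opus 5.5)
Your proof is correct and follows essentially the same route as the paper. In both, the atoms are powers of $\alpha$, a nontrivial decomposition propagates to all higher powers under multiplication by $\alpha^j$, the rank count gives atoms exactly $1,\alpha,\dots,\alpha^{d-1}$, and writing $\alpha^d$ in this basis yields $m_\alpha$. The one difference is that the paper first uses Proposition~\ref{prop:monogenic semidomains that are ID} to pass to a positive conjugate and treats $\alpha\in\{0,1\}$ separately, so the powers are automatically distinct. You instead handle possible repetitions directly; this works, though you should justify it in a line: a repetition among $1,\dots,\alpha^{m-1}$ would make $\alpha^m$ equal to an earlier atom.
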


\begin{proof}
    If the displayed identity holds, it reduces every power of $\alpha$ to a nonnegative integer combination of the first $d$ powers. Their linear independence over $\qq$ gives uniqueness.

    Conversely, suppose $(S_\alpha,+)$ is free. The case $\alpha=0$ is immediate. A nonzero free monoid is reduced and cannot be an abelian group, so Proposition~\ref{prop:monogenic semidomains that are ID} allows us to replace $\alpha$ by a positive conjugate. The case $\alpha=1$ is immediate as well. Otherwise its powers are distinct. Every additive atom is a power of $\alpha$, because these powers generate $S_\alpha$. Also, $1$ is an atom: a decomposition of $1$ into two nonzero elements would, upon multiplication, decompose every nonzero element.

    The difference group of $S_\alpha$ has rank $d$, so a free additive basis has exactly $d$ atoms. Choose the least $n$ such that $\alpha^n$ is not an atom. Every later power is also a nonatom, by multiplying a nontrivial decomposition of $\alpha^n$ by a positive power of $\alpha$. Consequently the atoms are exactly $1,\alpha,\ldots,\alpha^{n-1}$ and $n=d$. Expressing $\alpha^d$ in this basis gives the displayed monic degree-$d$ polynomial, which must be $m_\alpha$.
\end{proof}

\bigskip
\section{The Group of Units}

This section is devoted to the group of units of $S_\alpha$, where $\alpha \in \aaa$. Observe that when $\alpha \in \nn$ the equality $S_\alpha = \nn_0$ holds, and so $S_\alpha^\times$ is the trivial group.

\begin{prop}\label{prop:nontrivial units when N_0[alpha] is not an ID}
	Let $\alpha$ be a positive algebraic number such that $\alpha \notin \nn$. Then $\alpha \in S_\alpha^\times$ if and only if $S_\alpha^\times$ is nontrivial.
\end{prop}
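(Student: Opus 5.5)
The forward direction is immediate: if $\alpha \in S_\alpha^\times$ then, since $\alpha \ne 1$ (as $\alpha \notin \nn$), the group $S_\alpha^\times$ contains the nonidentity element $\alpha$ and so is nontrivial. The content lies in the converse, which I would prove by showing that any nontrivial unit forces $\alpha^{-1} \in S_\alpha$.

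So suppose $u \in S_\alpha^\times$ with $u \ne 1$, and write $u = f(\alpha)$ and $u^{-1} = g(\alpha)$ with $f,g \in \nn_0[x]$. Then $f(\alpha)g(\alpha) = 1$. Since $\alpha > 0$, every element of $S_\alpha$ is a nonnegative real number, so $u$ and $u^{-1}$ are positive reals with $u \ne 1$. I first claim that $fg \ne 1$ as polynomials. If $fg = 1$ in $\nn_0[x]$, then $f$ and $g$ are both the constant $1$, giving $u = 1$, a contradiction. Hence the polynomial $h(x) := f(x)g(x) \in \nn_0[x]$ satisfies $h(\alpha) = 1$ but is not the constant polynomial $1$.

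Next write $h(x) = c_0 + x\,k(x)$ with $c_0 = h(0) \in \nn_0$ and $k \in \nn_0[x]$. If $c_0 = 0$, then $\alpha k(\alpha) = 1$, so $\alpha^{-1} = k(\alpha) \in S_\alpha$ and $\alpha$ is a unit, as desired. If $c_0 \ge 2$, then $h(\alpha) \ge c_0 > 1$ because $\alpha > 0$ and the coefficients are nonnegative, which is impossible. So the only remaining case is $c_0 = 1$. In that case $\alpha k(\alpha) = 0$ with $k \in \nn_0[x]$ nonzero (because $h \ne 1$). But $\alpha > 0$ forces $k(\alpha) > 0$, a contradiction. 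Therefore $c_0 = 0$, and the argument is complete.

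The only delicate point is making sure the constant-term case analysis uses positivity of $\alpha$ correctly, in particular that a nonzero polynomial in $\nn_0[x]$ evaluates to a strictly positive number at $\alpha > 0$. The hypothesis $\alpha \notin \nn$ is used only in the forward direction, to guarantee $\alpha \ne 1$. In fact $\alpha \notin \nn$ also rules out $S_\alpha = \nn_0$, but the converse argument does not even need this.
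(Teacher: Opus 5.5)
Your proof is correct and takes essentially the paper's route: both write $u=f(\alpha)$ and $u^{-1}=g(\alpha)$, show that $fg$ has zero constant term, and read off $\alpha^{-1}\in S_\alpha$ from the identity $(fg)(x)=x\,k(x)$. The only difference is how the zero constant term is obtained: the paper notes that $\min\{f(\alpha),g(\alpha)\}<1$ forces $f(0)=0$ or $g(0)=0$, whereas you analyze the constant term of $fg$ directly, using $fg\neq 1$.
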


\begin{proof}
	Set $S := S_\alpha$. If $\alpha \in S^\times$, then $S^\times$ is nontrivial because $\alpha \notin \nn$ and, in particular, $\alpha \neq 1$.
	\smallskip
	
	Conversely, suppose that $S^\times$ is nontrivial, and fix $u \in S^\times \setminus \{1\}$. Then we can take polynomials $f(x), g(x) \in \nn_0[x]$ such that $f(\alpha) = u$ and $g(\alpha) = u^{-1}$. Since $u \neq 1$, neither $f(\alpha)$ nor $g(\alpha)$ is equal to~$1$. Also, as $\alpha > 0$ and $f(\alpha)g(\alpha) = 1$, both $f(\alpha)$ and $g(\alpha)$ are positive, and so $\min\{f(\alpha), g(\alpha)\} < 1$. If $f(\alpha) < 1$, then $0 \le f(0) \le f(\alpha) < 1$, which gives $f(0) = 0$. Similarly, $g(0) = 0$ whenever $g(\alpha) < 1$. Hence $(fg)(0) = 0$, so $fg = xh$ for some $h(x) \in \nn_0[x]$. Therefore
	\[
		1 = f(\alpha)g(\alpha) = \alpha h(\alpha),
	\]
	and $h(\alpha) \in S$. Thus, $\alpha$ is a unit of $S$, as desired.
\end{proof}

If the generator $\alpha$ is a nontrivial unit then the underlying additive monoid of $S_\alpha$ contains no additive atoms.

\begin{prop}\label{prop:nontrivial unit group additive antimatter}
	For an algebraic number $\alpha \in \aaa$, the following statements hold.
	\begin{enumerate}
		\item[(1)] If $\alpha \in S_\alpha^\times \setminus \{1\}$ then $S_\alpha$ is additively antimatter. 
		\smallskip
		
		\item[(2)] If $\alpha$ has a positive conjugate then the converse is also true.
	\end{enumerate}
\end{prop}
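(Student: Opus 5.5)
To prove (1), I would first dispose of the case where $S_\alpha$ has nontrivial additive units. Then I would show that $1$ is additively decomposable and propagate this decomposition to every nonzero element. For (2), I would pass to a positive conjugate and reuse the argument of Proposition~\ref{prop:nontrivial units when N_0[alpha] is not an ID}.

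\emph{Step 1 (the ring case).} Suppose $(S_\alpha,+)$ has a nonzero additive unit $u=f(\alpha)$, with $-u=g(\alpha)$ and $f,g\in\nn_0[x]$. Then $h:=f+g$ is a nonzero polynomial in $\nn_0[x]$ with $h(\alpha)=0$. Since $\alpha$ is a unit, $\alpha\neq0$, so we may divide $h$ by the largest power of $x$ dividing it and assume $h(0)>0$. Every conjugate of $\alpha$ is a root of $h$, and $h$ has no roots in $\rr_{\ge0}$. By Proposition~\ref{prop:monogenic semidomains that are ID}, $S_\alpha$ is then a ring. In a ring every element is an additive unit, so there are no additive atoms and $S_\alpha$ is antimatter. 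Hence we may assume that $(S_\alpha,+)$ is reduced.

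\emph{Step 2 (decomposing $1$).} Write $\alpha^{-1}=h(\alpha)$ with $h\in\nn_0[x]$, so that $1=\sum_i c_i\alpha^{i+1}$.
\begin{itemize}
\item If $\sum_i c_i\ge2$, then $1=a+b$ with $a,b$ nonzero elements of $S_\alpha$; each is a nonzero power of $\alpha$ or a sum of such.
\item If $h=x^k$ is a single monomial, then $\alpha^{k+1}=1$. Since $\alpha\neq1$, $\alpha$ is a nontrivial root of unity of some order $n\ge2$, and $1+\alpha+\cdots+\alpha^{n-1}=0$. Thus $1$ has an additive inverse in $S_\alpha$, and we are back in Step 1.
\end{itemize}
In the reduced case, for any nonzero $s\in S_\alpha$ we get $s=sa+sb$. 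Both summands are nonzero because $S_\alpha$ has no zero divisors, and nonzero means non-unit since $(S_\alpha,+)$ is reduced. So $s$ is not an atom, and $S_\alpha$ is additively antimatter.

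\emph{Step 3 (converse under a positive conjugate).} Let $\beta>0$ be a conjugate of $\alpha$. The field isomorphism $\qq(\alpha)\to\qq(\beta)$ restricts to a semiring isomorphism $S_\alpha\to S_\beta$ sending $\alpha\mapsto\beta$. This preserves both additive atoms and whether the generator is a unit different from $1$, so we may assume $\alpha>0$. Then $S_\alpha\subseteq\rr_{\ge0}$ is additively reduced. If $\alpha=1$ then $S_\alpha=\nn_0$, which has the atom $1$, so $\alpha\neq1$.

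Now suppose $S_\alpha$ is antimatter. Then $1$ is not an atom, so $1=f(\alpha)+g(\alpha)$ with $f,g\in\nn_0[x]$ and both values positive. Hence $0<f(\alpha),g(\alpha)<1$. Exactly as in the proof of Proposition~\ref{prop:nontrivial units when N_0[alpha] is not an ID}, $f(0)\le f(\alpha)<1$ forces $f(0)=0$, and likewise $g(0)=0$. Writing $f=xf_1$ and $g=xg_1$ gives $1=\alpha\,(f_1+g_1)(\alpha)$, so $\alpha\in S_\alpha^\times\setminus\{1\}$.

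The main subtlety I expect is Step 1. Without first ruling out additive units, the decomposition $s=sa+sb$ alone would not show that $s$ is not an atom, and the monomial case in Step 2 needs the same reduction.
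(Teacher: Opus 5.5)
Your proof is correct and follows essentially the same route as the paper. Like the paper, you dispose of the ring case, write $1=\alpha h(\alpha)$ as a sum of two nonzero elements and multiply through, and for (2) pass to a positive conjugate and force the constant terms to vanish. The only difference is organizational: in (1) you split on reducedness and treat the root-of-unity monomial case explicitly, whereas the paper passes to a positive conjugate once $S_\alpha$ is not a ring, which makes $(S_\alpha,+)$ reduced and rules out the monomial case in one stroke.
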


\begin{proof}
	(1) Fix $\alpha \in \aaa$ and assume that $\alpha$ is a unit of $S_\alpha$ with $\alpha \neq 1$. If $S_\alpha$ is an integral domain then the additive monoid of $S_\alpha$ is an abelian group and so it is trivially antimatter. Therefore we assume that $S_\alpha$ is not an integral domain. As additive antimatterness is preserved under isomorphism, after replacing $\alpha$ by one of its positive conjugates we can further assume that $S_\alpha \subseteq \rr_{\ge 0}$. Hence $(S_\alpha,+)$ is a  reduced monoid. As $\alpha \in S_\alpha^\times$, we can take $f(x) \in \nn_0[x]$ such that $\alpha f(\alpha) = 1$. Note the this equality yields a nontrivial additive decomposition of $1$ because $f(x)$ is not a monic monomial.
	
	From the fact that $\alpha \in \rr_{> 0} \setminus \{1\}$ one deduces that $f(x)$ is not a monic monomial. This, along with the fact that $(S_\alpha, +)$ is reduced, implies that $1$ is not an additive atom and so, after taking nonzero $\sigma_1,\sigma_2 \in S_\alpha$ such that $1=\sigma_1+\sigma_2$, we see that $\beta = \beta \sigma_1 + \beta\sigma_2$ for every nonzero $\beta\in S_\alpha$. Both summands are nonzero because the semidomain has no zero-divisors. Hence $S_\alpha$ is additively antimatter.
	\smallskip

    (2) Now suppose $\alpha$ has a positive conjugate and $S_\alpha$ is additively antimatter. Since $S_\alpha$ and $S_\beta$ are isomorphic for conjugates $\alpha$ and $\beta$, we may assume without loss of generality that $\alpha$ is positive. Then there exist $f(x), g(x)\in\nn_0[x]$ such that $f(\alpha),g(\alpha)>0$ and $f(\alpha) + g(\alpha)=1$. Both values are less than~$1$, so both constant terms are zero. Writing $f=xf_1$ and $g=xg_1$ gives $\alpha(f_1(\alpha) + g_1(\alpha)) = 1$. Also, $\alpha \ne 1$, since $(S_1,+) = (\nn_0,+)$ has the atom $1$. Thus, $\alpha\in S_\alpha^\times\setminus\{1\}$.
\end{proof}

We have seen in Proposition~\ref{prop:nontrivial units when N_0[alpha] is not an ID} that if $S_\alpha$ is not an integral domain then either $S_\alpha^\times$ is trivial or $\alpha$ is a unit. Let us now look at the case when $S_\alpha$ is an integral domain and, in this case, determine what are the conditions under which $\alpha$ is a unit of $S_\alpha$.  For $\alpha \in \aaa$, recall that $\omega_\alpha(x) \in \zz[x]$ is the unique primitive integer multiple of $m_\alpha(x)$ with positive leading coefficient.

\begin{prop}\label{prop:when alpha is a unit if N_0[alpha] is an ID}
	Let $\alpha$ be an algebraic number such that $S_\alpha$ is an integral domain. Then $\alpha$ is a unit in $S_\alpha$ if and only if $|\omega_\alpha(0)| = 1$.
\end{prop}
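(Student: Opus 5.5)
By Proposition~\ref{prop:monogenic semidomains that are ID}, the hypothesis that $S_\alpha$ is an integral domain gives $S_\alpha=\zz[\alpha]$. The claim therefore reduces to a statement about $\zz[\alpha]$: we must show that $\alpha^{-1}\in\zz[\alpha]$ if and only if $|\omega_\alpha(0)|=1$. Note that $\alpha\neq 0$, since otherwise $S_\alpha=\nn_0$ is not a ring. Write $\omega_\alpha(x)=\sum_{i=0}^d a_i x^i$ with $a_d>0$. Because $\omega_\alpha$ is irreducible and $\alpha\ne0$, we have $a_0\neq0$.

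For the direction ($\Leftarrow$), suppose $a_0=\pm1$. Evaluating $\omega_\alpha(\alpha)=0$ gives
\[
	\alpha\sum_{i=1}^{d} a_i\alpha^{i-1}=-a_0=\mp1.
\]
Hence $\alpha^{-1}=\mp\sum_{i=1}^d a_i\alpha^{i-1}\in\zz[\alpha]=S_\alpha$, so $\alpha$ is a unit.

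For the direction ($\Rightarrow$), suppose $\alpha g(\alpha)=1$ for some $g\in\zz[x]$. Then $xg(x)-1$ vanishes at $\alpha$, so $m_\alpha$ divides it in $\qq[x]$, and therefore so does $\omega_\alpha$. Since $\omega_\alpha$ is primitive, Gauss's lemma in the form recalled in the preliminaries lets us write $xg(x)-1=\omega_\alpha(x)h(x)$ with $h\in\zz[x]$. Evaluating at $x=0$ gives $-1=a_0h(0)$, so $a_0$ divides $1$ in $\zz$, and thus $|\omega_\alpha(0)|=1$.

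The main point requiring care is the appeal to Gauss's lemma, which ensures that $h$ has integer coefficients. This is precisely why $\omega_\alpha$ is used rather than $m_\alpha$: for non-integral $\alpha$ the minimal polynomial $m_\alpha$ has non-integer coefficients, and the divisibility argument would not yield an integer constraint. The rest of the argument is routine.
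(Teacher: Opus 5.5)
Your proposal is correct and follows essentially the same route as the paper. For the forward direction you factor $xg(x)-1=\omega_\alpha(x)h(x)$ with $h\in\zz[x]$ by Gauss's lemma and evaluate at $0$. For the converse you read off $\alpha^{-1}=\mp\sum_{i\ge1}a_i\alpha^{i-1}\in\zz[\alpha]=S_\alpha$ from $\omega_\alpha(\alpha)=0$.
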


\begin{proof}
	Take $\alpha \in \aaa$ such that $S_\alpha$ is an integral domain, which means that $S_\alpha = \zz[\alpha]$.
	
	For the direct implication, assume that $\alpha \in S_\alpha^\times$. Then we can pick $f(x) \in \nn_0[x]$ such that $\alpha f(\alpha) = 1$. Now set $g(x) := xf(x) - 1 \in \zz[x]$. As $g(\alpha) = 0$, we can write $g(x) = \omega_\alpha(x)h(x)$ for some polynomial $h(x) \in \qq[x]$. As $\omega_\alpha(x)$ has content $1$, Gauss's lemma ensures that $h(x) \in \zz[x]$. Hence $\omega_\alpha(0) h(0) = g(0) = -1$, which implies that $|\omega_\alpha(0)| = 1$.
	
	Conversely, assume that $|\omega_\alpha(0)| = 1$. Because $\omega_\alpha(x) - \omega_\alpha(0) \in x \zz[x]$, we can pick a polynomial $g(x) \in \zz[x]$ such that $\omega_\alpha(x) - \omega_\alpha(0) = xg(x)$. Then $\alpha g(\alpha) = - \omega_\alpha(0) = \pm 1$. Therefore either $\alpha g(\alpha) = 1$ or $\alpha (-g(\alpha)) = 1$. As $g(x) \in \zz[x]$, it follows that $\pm g(\alpha) \in \zz[\alpha] = S_\alpha$, where the last equality holds because $S_\alpha$ is an integral domain. Hence either $-g(\alpha)$ or $g(\alpha)$ is the inverse of $\alpha$ in $S_\alpha$. Thus, $\alpha \in S_\alpha^\times$.
\end{proof}
	
Let $K$ be a number field. Recall that for a finite set of nonzero prime ideals of $\OK$, an element $\alpha \in K^\times$ is an $S$-unit if the principal fractional ideal $\alpha\OK$ belongs to the group generated by the ideals in $S$. For the ring of rational integers $\zz$ one may take $S$ to be a finite set of prime numbers and define an $S$-unit to be a rational number whose numerator and denominator have all prime divisors in~$S$. It is well known that $\OK^\times$ is contained in the group of $S$-units. In addition, the following theorem holds.
	
\begin{theorem}[Dirichlet's $S$-unit Theorem]\label{thm:s unit theorem}
	The group of $S$-units is finitely generated, with rank (maximal number of multiplicatively independent elements) equal to $r+s$, where $r := \rank \, \OK^\times$ and $s = |S|$. More precisely, let~$\mathcal V$ consist of the archimedean places of~$K$ and the primes in~$S$, with the standard product-formula absolute values. Then
	\[
		\big\{(\log|u|_v)_{v\in\mathcal V}:u\text{ is an $S$-unit}\big\}
	\]
	is a full lattice in $\{(x_v)\in\rr^{\mathcal V}:\sum_vx_v=0\}$; that is, it is discrete and spans this hyperplane.
\end{theorem}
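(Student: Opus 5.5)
The plan is the classical proof: the $S$-unit group embeds, modulo a finite kernel, as a discrete subgroup of a finite-dimensional real vector space, and its image spans the trace-zero hyperplane. Write $U_S$ for the group of $S$-units and consider the logarithmic map
\[
	\lambda \colon U_S \to \rr^{\mathcal V}, \qquad u \mapsto (\log|u|_v)_{v\in\mathcal V},
\]
where for $\mfp\in S$ we use $|u|_\mfp = N(\mfp)^{-\nu_\mfp(u)}$. The first step is to check that $\lambda$ lands in the hyperplane $H:=\{\sum_v x_v=0\}$. For an $S$-unit $u$, the valuations $\nu_\mfp(u)$ vanish outside $S$, so the product formula reads $\prod_{v\in\mathcal V}|u|_v = |N_{K/\qq}(u)|\cdot\prod_{\mfp\in S}N(\mfp)^{-\nu_\mfp(u)}$. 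This equals $1$ because $|N_{K/\qq}(u)| = N(u\OK)=\prod_{\mfp\in S}N(\mfp)^{\nu_\mfp(u)}$.

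The second step is discreteness and finiteness of the kernel. If $\lambda(u)$ lies in a bounded set, then every $\nu_\mfp(u)$ with $\mfp\in S$ is bounded, so $u$ lies in a fixed fractional ideal $\mathfrak a$. All archimedean absolute values of $u$ are also bounded. Since $\mathfrak a$ is a lattice in $K\otimes\rr$ under the Minkowski embedding, only finitely many such $u$ exist. Hence $\lambda(U_S)$ is discrete and $\ker\lambda$ is finite; the kernel consists of elements with all absolute values $1$ and is therefore $\mu(K)$. It follows that $U_S\cong \mu(K)\times\lambda(U_S)$ with $\lambda(U_S)$ free of finite rank, so $U_S$ is finitely generated.

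The third step, which is the main obstacle, is to show that $\lambda(U_S)$ spans $H$, i.e.\ has rank $r+s$ with $r = r_1+r_2-1$. I would not redo Dirichlet's unit theorem; I assume it for $\OK^\times$, whose image is a full lattice in the archimedean hyperplane $H_\infty = H\cap\rr^{\text{arch}}$. For each $\mfp\in S$, finiteness of the class group $h_K$ gives $\mfp^{h_K}=\pi_\mfp\OK$ for some $\pi_\mfp\in U_S$. Its image $\lambda(\pi_\mfp)$ has $\mfp$-coordinate $-h_K\log N(\mfp)\neq 0$ and zero coordinates at the other primes of $S$. These $s$ vectors, together with a basis of $\lambda(\OK^\times)$ supported on the archimedean coordinates, form a triangular system. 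They are therefore linearly independent, giving $r+s=\dim H$ independent vectors in $H$.

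Finally, I would note that the rank equals $r+s$ because the image is a discrete subgroup spanning $H$, hence a full lattice. This lattice has exactly $\dim H=(r_1+r_2)+s-1=r+s$ generators. The only external inputs are the classical unit theorem, finiteness of the class group, and the product formula. The spanning step is where I expect the care to lie, and reducing it to these classical facts is what makes it manageable.
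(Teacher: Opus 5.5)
Your proof is correct, and it is the standard textbook argument. The key steps are:
\begin{itemize}
\item the identity $|N_{K/\qq}(u)|=N(u\OK)$ puts the logarithmic image in the trace-zero hyperplane;
\item the Minkowski lattice $\mathfrak a\subset K\otimes\rr$ gives discreteness and a finite kernel $\mu(K)$, and hence finite generation via the splitting $U_S\cong\mu(K)\times\lambda(U_S)$;
\item the ordinary unit theorem, together with generators $\pi_\mfp$ of $\mfp^{h_K}$ whose $S$-coordinates form a nonzero diagonal block, yields $\dim H=r_1+r_2+s-1=r+s$ independent vectors, hence a full lattice.
\end{itemize}

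The paper gives no proof of this statement. It quotes the $S$-unit theorem as a classical black box and uses it twice: for the structure of $\mathcal O_{K,S}^\times$ in Theorem~\ref{thm:group of unity main}, and for the full lattice $\Lambda$ in the proof of Theorem~\ref{thm:ffcharacterization}. So there is no competing route to compare with. Citing keeps the paper short. Your argument shows that the only nontrivial inputs are Dirichlet's unit theorem for $\OK^\times$ and finiteness of the class group. Taking the former as given is appropriate here, since the statement itself defines $r$ as $\rank\OK^\times$.
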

	
As the last result of this section, we provide a description of the group of units of $S_\alpha$ for any choice of the algebraic generator~$\alpha$. 
	
\begin{theorem}[Generalized Dirichlet's Unit Theorem] \label{thm:group of unity main}
	For any $\alpha \in \aaa$, the group of units of the semidomain $S_\alpha$ has the form $\mu(S_\alpha) \times \zz^r$, where $r \in \nn_0$ and $\mu(S_\alpha)$ is the cyclic group consisting of all the roots of unity inside $S_\alpha$.
\end{theorem}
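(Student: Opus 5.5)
The plan is to embed $S_\alpha^\times$ into a group of $S$-units of the number field $K:=\qq(\alpha)$ and then read off the structure from Theorem~\ref{thm:s unit theorem}.

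First, we show that $S_\alpha^\times$ is contained in a group of $S$-units. Write $\omega_\alpha(x)=a_dx^d+\dots+a_0$ with $a_d\in\nn$. Then $a_d\alpha$ is an algebraic integer, so every element of $S_\alpha\subseteq\zz[\alpha]$ has the form $\beta/a_d^m$ with $\beta\in\OK$ for some $m\in\nn_0$. Let $S$ be the finite set of prime ideals of $\OK$ dividing $a_d\OK$.

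Now take $u\in S_\alpha^\times$, so that both $u$ and $u^{-1}$ lie in $\zz[\alpha]$. For every prime $\mfp\notin S$ we have $\nu_\mfp(u)\ge 0$ and $\nu_\mfp(u^{-1})\ge0$, hence $\nu_\mfp(u)=0$. This shows that $u\OK$ lies in the group generated by $S$, that is, $u$ is an $S$-unit. Thus $S_\alpha^\times$ is a subgroup of the group $U_S$ of $S$-units.

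Second, we describe $U_S$. By Theorem~\ref{thm:s unit theorem}, the map $L\colon U_S\to\rr^{\mathcal V}$ given by $u\mapsto(\log|u|_v)_v$ has image a lattice, hence a free abelian group of finite rank. Its kernel consists of the elements all of whose archimedean absolute values equal $1$ and which have valuation $0$ at every prime in $S$. Such elements are algebraic integers with all conjugates of absolute value $1$, so by Kronecker's theorem they are roots of unity. The kernel is therefore $\mu(K)$, which is finite and cyclic, being a finite subgroup of the multiplicative group of a field. Hence $U_S$ is a finitely generated abelian group with torsion subgroup $\mu(K)$, so $U_S\cong\mu(K)\times\zz^{r+s}$.

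Third, we pass to the subgroup. A subgroup of a finitely generated abelian group is finitely generated, so $S_\alpha^\times$ is finitely generated. Its torsion subgroup is $S_\alpha^\times\cap\mu(K)$, which is exactly the set of roots of unity lying in $S_\alpha$. Indeed, any root of unity $\zeta\in S_\alpha$ is automatically a unit of $S_\alpha$: since $\zeta^n=1$, we have $\zeta^{-1}=\zeta^{n-1}\in S_\alpha$. This torsion subgroup is cyclic as a subgroup of the cyclic group $\mu(K)$. By the structure theorem for finitely generated abelian groups, the torsion subgroup splits off, giving $S_\alpha^\times\cong\mu(S_\alpha)\times\zz^r$ with $r\le r_K+|S|$. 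Here $\mu(S_\alpha)$ denotes the roots of unity in $S_\alpha$; the statement's notation reads $\mu$ on the semidomain.

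The main obstacle is the first step: making sure the denominators of elements of $\zz[\alpha]$ are controlled by a fixed finite set of primes when $\alpha$ is not integral. The remark that $a_d\alpha\in\OK$ handles this. The remaining steps are standard consequences of Theorem~\ref{thm:s unit theorem} together with Kronecker's theorem.
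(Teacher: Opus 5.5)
Your proof is correct and follows essentially the same route as the paper. Both arguments control the denominators of $\zz[\alpha]$ by a fixed integer (you use the leading coefficient of $\omega_\alpha$, the paper writes $\alpha=\omega/d$), embed $S_\alpha^\times$ in the finitely generated $S$-unit group $\mathcal{O}_{K,S}^\times\cong\mu(K)\times\zz^{r+s}$, and apply the structure theorem for finitely generated abelian groups; your extra checks (Kronecker's theorem for the torsion of the $S$-units, and the observation that every root of unity in $S_\alpha$ is automatically a unit) fill in details the paper leaves implicit.
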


\begin{proof}
	The main idea of this proof is to embed the unit group of $S_\alpha$ into the unit group of a specific ring of $S$-units within a number field so that we can then apply Dirichlet's $S$-unit theorem for $S$-units to obtain the desired result.
	\smallskip
	
	Write $\alpha = \omega/d$ with $\omega$ an algebraic integer and $d\in\nn$. Put $K = \qq(\alpha)$ and let $S$ be the finite set of nonzero prime ideals of $\mathcal O_K$ dividing $d \mathcal O_K$. Define
	\[
		\mathcal O_{K,S}:=\{0\} \cup
		\{z \in K^\times : \nu_{\mathfrak p}(z) \ge 0
		\text{ for every nonzero prime }\mathfrak p \notin S \}.
	\]
	Since $d$ has valuation zero at every nonzero prime outside $S$,
	\[
		S_\alpha \subseteq \zz[\alpha] \subseteq \mathcal O_K[1/d] = \mathcal O_{K,S}.
	\]
	In particular, $S_\alpha^\times$ is a subgroup of the finitely generated group
	\[
		\mathcal O_{K,S}^\times \cong \mu(K)\times
		\zz^{r_1+r_2+|S|-1}.
	\]
	The structure theorem for finitely generated abelian groups therefore gives
	\begin{align}\label{eq:subgroup of OKS I}
		S_\alpha^\times\cong C_m\times\zz^n
	\end{align}
	for some $(m,n) \in \nn \times \nn_0$, where $C_m$ is the torsion subgroup of $S_\alpha^\times$:
    \[
        C_m = \mu(S_\alpha) := \{ \zeta \in S_\alpha^\times : \zeta^k = 1 \text{ for some } k \in \nn \} = S_\alpha^\times \cap \mu(K),
    \]
    because the torsion elements of $K^\times$ are precisely the roots of unity in $K$.
\end{proof}
	
As a consequence of Theorem~\ref{thm:group of unity main}, we obtain the following corollary.
	
\begin{cor}
    If $\alpha$ is a positive algebraic number then the group of units of $S_\alpha$ is a finite-rank free abelian group.
\end{cor}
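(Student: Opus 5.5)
The corollary follows from Theorem~\ref{thm:group of unity main} once we know the torsion part is trivial. That theorem gives $S_\alpha^\times \cong \mu(S_\alpha)\times\zz^r$ for some $r\in\nn_0$, where $\mu(S_\alpha)$ is the finite cyclic group of roots of unity lying in $S_\alpha^\times$. So it suffices to prove $\mu(S_\alpha)=\{1\}$ whenever $\alpha>0$; then $S_\alpha^\times\cong\zz^r$, a free abelian group of finite rank.

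First I use positivity. Every element of $S_\alpha$ has the form $f(\alpha)$ with $f(x)\in\nn_0[x]$. Since $\alpha>0$ and the coefficients of $f$ are nonnegative, $f(\alpha)\in\rr_{\ge0}$, so $S_\alpha\subseteq\rr_{\ge 0}$. Now take any $\zeta\in\mu(S_\alpha)$. It is a complex root of unity that is also a nonnegative real number. Moreover $\zeta\neq0$, because it is a unit. The only root of unity in $\rr_{>0}$ is $1$: if $\zeta>0$ and $\zeta^k=1$, then $\zeta=1$ by monotonicity of $t\mapsto t^k$ on $\rr_{>0}$. Hence $\mu(S_\alpha)$ is trivial.

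The argument involves no real obstacle. The only point needing care is that Theorem~\ref{thm:group of unity main} identifies the torsion subgroup with the roots of unity, which is exactly what the positivity argument rules out. The rank $r$ may be zero, for instance when $\alpha\in\nn$. In that case the group is the trivial free abelian group, which is still consistent with the statement.
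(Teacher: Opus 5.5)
Your proposal is correct and follows the same route as the paper. You invoke Theorem~\ref{thm:group of unity main} and then use $S_\alpha \subseteq \rr_{\ge 0}$ to force $\mu(S_\alpha)=\{1\}$, spelling out the positivity step (the only positive real root of unity is $1$) that the paper leaves implicit.
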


\begin{proof}
    This follows immediately from Theorem~\ref{thm:group of unity main} because if $\alpha$ is a positive algebraic number then the only root of unity that is contained in $S_\alpha$ is~$1$ and so $\mu(S_\alpha)$ is the trivial group.
\end{proof}

We conclude this section with examples.

\begin{example}
	Consider the polynomial $m(x) = x^2 + 3x + 1$, which has two negative real roots $\alpha = (-3 - \sqrt{5})/2$ and $(-3 + \sqrt{5})/2$. Since $m(x)$ is irreducible in $\qq[x]$, it is the minimal polynomial of $\alpha$ and so $\alpha$ is an algebraic integer. As the minimal polynomial has no positive roots, it follows from Proposition~\ref{prop:monogenic semidomains that are ID} that the semiring $S_\alpha$ is an integral domain, which means that $S_\alpha=\zz[\alpha]=\zz[(1+\sqrt5)/2]=\mathcal O_{\qq(\sqrt5)}$. Indeed, with $\varphi=(1+\sqrt5)/2$, we have $\alpha=-1-\varphi$ and $\varphi=-1-\alpha$.

    Since $m(\alpha) = 0$, we see that $\alpha(-\alpha-3) = 1$. Also $N_{\qq(\sqrt5)/\qq}(\alpha) = 1$, the constant term of its monic quadratic minimal polynomial. Hence $\alpha\in\zz[\alpha]^\times = S_\alpha^\times$.
    
    Since $\qq(\alpha)$ is a real quadratic field, the only roots of unity inside $\zz[\alpha]$ are $\{\pm 1\}$. As $\zz[\alpha]$ is the full ring of integers of $\qq(\sqrt5)$, it follows from Dirichlet's Unit Theorem that $S_\alpha^\times = \mathbb{Z}[\alpha]^\times \cong C_2 \times \mathbb{Z}$. \hfill $\blacksquare$
\end{example}

\begin{example}\label{ex:units of S alpha sqrt2}
	Let $\alpha$ denote the algebraic number $1/\sqrt{2}$ and let $S_\alpha$ be the monogenic semidomain generated by $\alpha$. Then $\alpha^2 = 1/2$, and so every element of $S_\alpha$ can be written uniquely as $d_1 + d_2 \alpha$ for nonnegative dyadic rationals, whence
	\[
		S_\alpha = S_{2^{-1}} \oplus S_{2^{-1}} \alpha.
	\]
	Since $\alpha>0$, we see that $S_\alpha\subseteq\rr_{\ge0}$ and, therefore,  $S_\alpha$ is not a ring. On the other hand, the identity $\alpha(2\alpha) = 1$ shows that $\alpha \in S_\alpha^\times$ and $\alpha^{-1} = 2\alpha$. Hence every integral power of~$\alpha$ is a unit.

	Conversely, take $\omega := d_1 + d_2 \alpha \in S_\alpha^\times$, and write its inverse as $\omega^{-1} = e_1 + e_2 \alpha$, where $d_1, d_2, e_1, e_2 \in S_{2^{-1}}$. Since $\alpha^2 = 1/2$, we obtain
	\[
		1 = (d_1 + d_2\alpha)(e_1 + e_2 \alpha)
		  = d_1e_1 + \frac{d_2e_2}{2} + (d_1e_2 + d_2e_1)\alpha.
	\]
	Since $\alpha$ is irrational, both equations $2d_1e_1+d_2e_2=2$ and $d_1e_2+d_2e_1 = 0$ hold. 
	As all four coefficients are nonnegative, either $d_2 = e_2 = 0$ or $d_1 = e_1 = 0$. In the first case, $d_1e_1=1$, while in the second case, $d_2 e_2 = 2$. Because a positive dyadic rational has a dyadic inverse precisely when it is an integral power of~$2$, we obtain $\omega = 2^k = \alpha^{-2k}$ or $\omega = 2^k\alpha = \alpha^{1-2k}$ for some $k \in \zz$. Hence
	\[
		S_\alpha^\times = \langle \alpha \rangle = \{\alpha^n : n \in \zz \} \cong \zz.  
	\] \hfill $\blacksquare$
\end{example}

\bigskip
\section{Atomicity}

The primary goal of this section is to construct a monogenic semiring that is not atomic. We say that a nonconstant polynomial $f(x) \in \qq[x]$ is simple if the only pair $(n, g(x)) \in \nn \times \qq[x]$ that satisfies the equality $f(x) = g(x^n)$ is $(1, f(x))$, which means that the GCD of the support of a simple polynomial must be $1$ (in $\mathbb{N}$). 
For $\alpha\in\aaa$, write $M_\alpha:=(S_\alpha,+)$, set $g=\gcd\supp m_\alpha$, and define the simplified polynomial $h$ by $m_\alpha(x)=h(x^g)$. The polynomial $h$ is irreducible and is the minimal polynomial of $\alpha^g$. Its support has greatest common divisor $1$. We call $M_{\alpha^g}$ the simplified monoid; grouping powers modulo $g$ and using linear independence over $\qq(\alpha^g)$ gives
\[
    M_\alpha\cong M_{\alpha^g}^{\,g}.
\]
For $\alpha=0$, interpret this as the tautological case $g=1$.

\begin{prop}\label{prop: accp counterexample}
	Let $\beta$ be a positive algebraic integer with minimal polynomial $m_\beta(x)$, and set
	\[
		g:=\gcd \emph{supp} \, m_\beta(x).
	\]
	Suppose that $g$ is even, that the additive monoid $(S_\beta,+)$ is antimatter, and that $\beta$ has a conjugate $\rho$ such that $\rho^g \notin \rr$. Then the multiplicative monoid $S_\beta \setminus \{0\}$ is not atomic.
\end{prop}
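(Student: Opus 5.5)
The plan is to exhibit a nonzero nonunit of $S_\beta$ that cannot be written as a finite product of atoms. The intended mechanism is a positivity-based membership test for $S_\beta$, combined with a family of divisors built from $\beta^{g/2}$ whose behaviour at the conjugate $\rho$ is controlled by the hypothesis $\rho^g\notin\rr$.

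\textbf{Step 1: normalize.} Since $(S_\beta,+)$ is antimatter and $\beta>0$, Proposition~\ref{prop:nontrivial unit group additive antimatter}(2) gives $\beta\in S_\beta^\times\setminus\{1\}$. Because $\beta$ is an algebraic integer, this yields $|\omega_\beta(0)|=1$, $S_\beta\subseteq\zz[\beta]\subseteq\OK$, and every unit of $S_\beta$ has norm $\pm1$. In particular $2$, and more generally every element of norm of absolute value greater than $1$, is a nonunit. Since $m_\beta(x)=h(x^g)$ with $g$ even, the conjugates of $\beta$ are closed under multiplication by $g$-th roots of unity. In particular $-\beta$ is a conjugate, and $\gamma:=\beta^{g/2}$ satisfies $\gamma^2=\beta^g$, a root of $h$.

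\textbf{Step 2: a membership criterion.} I would prove, or invoke in the spirit of Proposition~\ref{prop:monogenic semidomains that are ID} via P\'olya's theorem, the following criterion. An element $\theta\in\zz[\beta]=\zz[\beta,\beta^{-1}]$ lies in $S_\beta$ as soon as it is strictly positive at every positive real conjugate of $\beta$. The argument would write $\theta=F(\beta)$ with $F\in\zz[x]$, homogenize $F$ modulo $\omega_\beta$, and use that the only obstructions to nonnegative coefficients come from nonnegative real roots. Units of $S_\beta$ allow us to clear denominators in $\beta$.

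\textbf{Step 3: build the divisibility chain.} Take a candidate element such as $a=1+\gamma$ or $a=2$. For each $n$, aim for a factorization $a=b_n c_n$ with $b_n,c_n\in S_\beta$ nonunits, where every factorization of $a$ is forced to pass through some $b_n$, and $b_n$ again splits. Concretely, I would look for elements of the form $1+\gamma\zeta$-type combinations, realized inside $S_\beta$ via Step 2, whose quotients remain positive at the positive real conjugates $\pm$-twisted by $g$-th roots of unity. The hypothesis $\rho^g\notin\rr$ is what should guarantee that these quotients are nonunits. It gives a conjugate $\rho$ at which $|\rho^g|$ is not forced to equal a real positive conjugate's value, so the norms of the quotients are not $\pm1$ and no step in the chain collapses to a unit. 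From this I would deduce that every divisor of $a$ that is a nonunit has a proper nonunit divisor. Equivalently, no atom divides $a$, so $a$ is not a product of atoms.

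\textbf{Main obstacle.} The hardest step is Step 3: producing quotients that stay inside $\zz[\beta]$, not merely inside $\qq(\beta)$, while remaining nonunits. Positivity alone is not enough, because units of $\zz[\beta]$ that are positive at the positive conjugates are automatically units of $S_\beta$. So integrality has to come from choosing divisors whose norms divide the norm of $a$, and the nonunit property has to be witnessed at the nonreal conjugate $\rho$ via $\rho^g\notin\rr$. My first attempt would be to work in the subsemiring $S_{\beta^g}$, where $M_\beta\cong M_{\beta^g}^{\,g}$ makes the additive structure explicit. There I would try to split $a$ as a product of two conjugate-like factors involving $\gamma$ and $-\gamma$, both of which lie in $S_\beta$ because $-\beta$ is a conjugate of $\beta$.
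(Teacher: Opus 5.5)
Step~1 is fine, but your Step~2 is false, and if it held the proposition would fail. Under that criterion, $S_\beta^*$ would be exactly the set of nonzero $\theta\in\zz[\beta]$ that are positive at all positive real conjugates of $\beta$. Every unit of $\zz[\beta]$ in this set would then be a unit of $S_\beta$, because its inverse is positive at the same places. So a strictly ascending chain $x_1S_\beta\subsetneq x_2S_\beta\subsetneq\cdots$ would give a strictly ascending chain $x_1\zz[\beta]\subsetneq x_2\zz[\beta]\subsetneq\cdots$ in the Noetherian ring $\zz[\beta]$, and $S_\beta^*$ would satisfy the ACCP, hence be atomic.

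Membership in $S_\beta$ is not decided by the positive conjugates alone. It also depends on the conjugates $\beta\zeta$ with $\zeta^g=1$, which have the same modulus as $\beta$. The paper makes this precise. Put $\alpha:=\beta^g$ and $M:=S_\alpha$. Then $(M,+)$ is antimatter with $m_\alpha$ simple, so it is a valuation monoid by \cite[Theorem~5.6]{CGLY26}, and $M=\zz[\alpha]\cap\rr_{\ge0}$. Since $1,\beta,\dots,\beta^{g-1}$ is a basis of $\qq(\beta)$ over $\qq(\alpha)$, an element lies in $S_\beta$ if and only if all of its coordinates in this basis lie in $M$. For instance, $1-\beta^{g/2}$ is positive at $\beta$ but is not in $S_\beta$. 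Likewise, nothing involving $-\gamma$ enters $S_\beta$ merely because $-\beta$ is a conjugate. Your claim that positive units of $\zz[\beta]$ are automatically units of $S_\beta$ is false for the same reason, and its failure is exactly what drives the actual proof.

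This is also why Step~3 cannot work as designed. In any factorization of a fixed $a$ into nonunits of $S_\beta$, at most $\log_2|N(a)|$ factors can be nonunits of $\zz[\beta]$. So a refining chain must eventually split off ring units of norm $\pm1$, and their nonunit status in $S_\beta$ can never be certified by norms at $\rho$. Your candidate $a=2$ also fails: by the residue-class support argument, its divisors are, up to powers of the unit $\beta$, elements of $M$, and $M^*\cong(\zz[\alpha]\setminus\{0\})/\{\pm1\}$ is atomic, so $2$ is a product of atoms.

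The missing construction is as follows. Take $\gamma=\beta^{g/2}$ and $\sigma\colon\gamma\mapsto-\gamma$ over $\qq(\alpha)$. The hypothesis $\rho^g\notin\rr$, together with $\alpha>0$, gives $\rank\zz[\gamma]^\times-\rank\zz[\alpha]^\times\ge2$. Hence $\varepsilon\mapsto\log\varepsilon-\log\sigma(\varepsilon)$ has dense image on the units with $\varepsilon>0$ and $\sigma(\varepsilon)>0$. This produces ring units $\varepsilon=u+v\gamma$ with $u,v\in M$ and $v/u$ arbitrarily small. They are nonunits of $S_\beta$, because $\varepsilon^{-1}=(u-v\gamma)/(u^2-\alpha v^2)$ has a negative $\gamma$-coordinate. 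For $v/u$ small enough, $\varepsilon^{-1}(a+b\gamma)$ still has both coordinates in $M\setminus\{0\}$. Therefore no $a+b\gamma$ with $a,b\in M\setminus\{0\}$ is an atom.

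The support argument then shows that every atom dividing $1+\gamma$ would have the form $\beta^je$ with $e\in M$. A product of such elements is supported in a single residue class modulo $g$, unlike $1+\gamma$. Your choice $a=1+\gamma$ and your instinct to use $\gamma\mapsto-\gamma$ are right. But the hypothesis $\rho^g\notin\rr$ is needed to supply unit rank, not to make norms differ from $\pm1$.
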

	
\begin{proof}
	Write $g=2\ell$, and set $\alpha:=\beta^g$ and $\gamma:=\beta^\ell$ so that $\alpha=\gamma^2$. Since $(S_\beta,+)$ is antimatter, we must have $\beta<1$: otherwise, every nonzero element of $S_\beta$ would be at least~$1$, making $1$ an additive atom. Thus, $\alpha,\gamma \in (0,1)$.

	Because every exponent in $\supp m_\beta(x)$ is divisible by $g$, there is a monic polynomial $h(x) \in \zz[x]$ such that $m_\beta(x)=h(x^g)$. The irreducibility of $m_\beta(x)$ implies that $h(x)$ is irreducible, and therefore
	\[
		m_\beta(x)=m_\alpha(x^g).
	\]
	Moreover, $m_\alpha(x^2)$ is irreducible: a nontrivial factorization would remain nontrivial after substituting $x^\ell$ for $x$ and would factor $m_\beta(x)$. Hence $m_\gamma(x)=m_\alpha(x^2)$. In particular,
	\[
		[\qq(\beta):\qq(\alpha)] = g \qquad \text{and} \qquad [\qq(\gamma):\qq(\alpha)] = 2.
	\]

	Set $M:=S_\alpha$. The preceding direct-product decomposition identifies $(S_\beta,+)$ with $(M,+)^g$, so $M$ is antimatter. Moreover, $m_\alpha$ is simple because $g$ is the greatest common divisor of the support of $m_\beta$. Thus \cite[Theorem~5.6]{CGLY26} shows that $(M,+)$ is a valuation monoid. 
 	Since its difference group is $\zz[\alpha]$ and $M \subseteq \rr_{\ge 0}$, we see that $M = \zz[\alpha] \cap \rr_{\ge 0}$. Also, grouping powers of $\beta$ according to their residue classes modulo $g$ shows that every element of $S_\beta$ can be written as
	\begin{equation}\label{eq:beta-normal-form}
		\sum_{i=0}^{g-1} c_i\beta^i
	\end{equation}
	for some $c_0,\dots,c_{g-1}\in M$. This representation is unique because $1,\beta,\dots,\beta^{g-1}$ are linearly independent over $\qq(\alpha)$. Similarly,
	\[
		\zz[\gamma]=\zz[\alpha]\oplus\gamma\zz[\alpha].
	\]
	Since $M$ is antimatter, $1$ is not an additive atom. Choose nonzero $y,z\in M$ such that $1=y+z$, and represent them by polynomials in $\nn_0[x]$. Positivity forces both constant coefficients to be zero, and hence $1=\alpha q(\alpha)$ for some $q(x)\in\nn_0[x]$. Thus, $\alpha^{-1}\in M$. In particular,
	\[
		\gamma^{-1}=\gamma\alpha^{-1}
		\qquad \text{and} \qquad
		\beta^{-1}=\beta^{g-1}\alpha^{-1}
	\]
	belong to $S_\beta$, so $\alpha$, $\gamma$, and $\beta$ are units.

	We next construct sufficiently many units in $\zz[\gamma]$. Let $p$ and $q$ denote the numbers of positive and negative real conjugates of $\alpha$, respectively, and let $s$ denote the number of pairs of nonreal conjugates. The hypothesis $\rho^g\notin\rr$ gives $s\ge1$, while $\alpha>0$ gives $p\ge1$. From $m_\gamma(x)=m_\alpha(x^2)$, we obtain
	\[
		r_1(\gamma)=2p
		\qquad \text{and} \qquad
		r_2(\gamma)=q+2s.
	\]
	Dirichlet's unit theorem for orders now yields
	\begin{equation}\label{eq:unit-rank-gap}
		\rank\zz[\gamma]^\times-\rank\zz[\alpha]^\times=p+s\ge2.
	\end{equation}

	Let $\sigma$ be the $\qq(\alpha)$-automorphism of $\qq(\gamma)$ determined by $\sigma(\gamma)=-\gamma$, and set
	\[
		U:=\big\{\varepsilon\in\zz[\gamma]^\times:\varepsilon>0 \text{ and } \sigma(\varepsilon)>0\big\}.
	\]
	Define $\Delta\colon U\to\rr$ by $\Delta(\varepsilon):=\log\varepsilon-\log\sigma(\varepsilon)$. Since $U$ contains the square of every unit, $\rank U=\rank\zz[\gamma]^\times$. If $\varepsilon=u+v\gamma\in\ker\Delta$, where $u,v\in\zz[\alpha]$, then $\varepsilon=\sigma(\varepsilon)$ and hence $v=0$. Applying the same argument to $\varepsilon^{-1}$ shows that $\varepsilon \in \zz[\alpha]^\times$. Therefore it follows from~\eqref{eq:unit-rank-gap} that
	\[
		\rank\Delta(U)\ge\rank\zz[\gamma]^\times-\rank\zz[\alpha]^\times \ge 2.
	\]
	A noncyclic subgroup of $\rr$ is dense, so $\Delta(U)$ is dense in~$\rr$. Before being able to complete the proof, we need to establish the following claim.
	\smallskip
	
	\noindent \textsc{Claim.} $a + b\gamma \notin \mathcal{A}(S_\beta)$ for any nonzero $a,b \in M$.
	\smallskip
	
	\noindent \textsc{Proof of Claim.} Fix $R := a + b\gamma$, where $a,b \in M \setminus \{0\}$. First, let us verify that $R$ is not a unit: 
 	indeed,
	\[
		R^{-1} = \frac{a-b\gamma}{a^2-\alpha b^2},
	\]
	and the two coefficients in the unique decomposition over $\qq(\alpha)$ have opposite signs, whence the normal form \eqref{eq:beta-normal-form} prevents $R^{-1}$ from belonging to $S_\beta$. By the density of $\Delta(U)$, choose $\varepsilon := u + v\gamma \in U$ such that $\Delta(\varepsilon) > 0$ is sufficiently small. Then $u,v>0$, and so $u,v \in M$. Set $t := v/u$. From
	\[
		e^{\Delta(\varepsilon)} = \frac{1+t\gamma}{1-t\gamma},
	\]
	we see that $t$ can be chosen arbitrarily small. In particular, we may assume that
	\[
		0 < t < \min\left\{\frac{a}{\alpha b},\frac{b}{a}\right\}.
	\]
	Let $\delta := \varepsilon \sigma(\varepsilon) = u^2 - \alpha v^2$. This element is fixed by $\sigma$ and is a product of units, so $\delta$ is a positive unit of $\zz[\alpha]$. Therefore $\delta^{-1} \in M$. Consequently,
	\[
		\varepsilon^{-1}R = \delta^{-1} \big( (ua-\alpha vb)+(ub-va) \gamma \big)
	\]
	has two nonzero coefficients in~$M$. Thus, both $\varepsilon$ and $\varepsilon^{-1}R$ are nonunits of $S_\beta$, and $R = \varepsilon(\varepsilon^{-1}R)$ is a nontrivial factorization. Hence no element $a+b\gamma$ with $a,b\in M\setminus\{0\}$ is an atom.

	Finally, consider $A := 1+\gamma$. We claim that $A$ has no factorization into atoms. Suppose that $A = rs$ with nonzero $r,s \in S_\beta$, and write their normal forms as
	\[
		r=\sum_{i=0}^{g-1}r_i\beta^i
		\qquad \text{and} \qquad
		s=\sum_{j=0}^{g-1}s_j\beta^j,
	\]
	where all coefficients belong to $M$. Since these coefficients are nonnegative, no cancellation is possible in the product. The normal form of $A$ is supported only in the residue classes $0$ and $\ell$ modulo $g$. Fixing any $j$ with $s_j\neq0$, we see that the support of $r$ contains at most two residue classes; if it contains two, their difference is~$\ell$. Therefore $r \in \beta^i S_\gamma$ for some $i \in \ldb 0, \ell-1 \rdb$. The same conclusion holds for every divisor of~$A$. Now let $r$ be an atom dividing $A$. By the preceding support argument, we can write
	\[
		r = \beta^i(c + d\gamma)
	\]
	for some $i\in\ldb0,\ell-1\rdb$ and $c,d \in M$. Since $\beta$ is a unit, the element $c+d\gamma$ must also be an atom of $S_\beta$. The factorization constructed above rules out the case $c,d \neq 0$. Thus, either $c=0$ or $d=0$, and in either case $r = \beta^j e$ for some $j\in\nn_0$ and some $e \in M$. If $A$ factored into atoms, we would therefore have
	\[
		A = \beta^L c
	\]
	for some $L \in \nn_0$ and $c \in M$. The right-hand side has normal form supported in a single residue class modulo $g$, whereas $A = 1 + \beta^\ell$ has support in the two distinct residue classes $0$ and~$\ell$. This contradicts the uniqueness of \eqref{eq:beta-normal-form}. Thus, $A$ has no factorization into atoms, and $S_\beta$ is not atomic.
\end{proof}

Let us take a look at a concrete example of non-atomic monogenic semidomain

\begin{example}
    Consider the polynomial $m(x) := x^6+5x^2-1 \in \qq[x]$, and let us argue that $m(x)$ is irreducible. First, observe that modulo $3$ it can be written as the product of the irreducible cubics:
    \[
        m(x) = (x^3+x^2+2x+1)(x^3+2x^2+2x+2).
    \]
    On the other hand, we can factor $m(x)$ modulo $7$ as follows: 
    \[
        m(x) = (x^2+1)(x^4+6x^2+6).
    \]
    Then any proper irreducible factor of $m(x)$ in $\zz[x]$ would therefore have degree $3$ by reduction modulo $3$, and such a factor over $3$ has no root. However, the quartic over $7$ has neither a root nor a monic quadratic divisor, as direct division verifies. Hence $m(x)$ is irreducible in $\qq[x]$.
	
    Now let $\beta$ be a root of $m(x)$. Then the relation $1 = \beta^6 + 5\beta^2$ makes $1$ additively reducible and hence makes $S_\beta$ additively antimatter. Here $g=2$, and $\beta^2$ has minimal polynomial $x^3+5x-1$, which has a nonreal conjugate pair because its derivative is positive on $\rr$. Thus Proposition~\ref{prop: accp counterexample} applies: $S_\beta$ is not atomic, and $1+\beta$ has no factorization into atoms.
\end{example}

\bigskip
\section{The Bounded and Finite Factorization Properties}

In this section, we study factorization properties in our semidomain of interest.

\medskip
\subsection{The Finite Factorization Property}

We begin our discussion of the finite factorization property by identifying a natural class of algebraic monogenic semidomains generated by algebraic integers whose members are FFS.

\begin{prop}
	Let $\alpha$ be an algebraic integer with number field $K := \qq(\alpha)$, and let $\mathcal{O}_K$ be the ring of integers of~$K$. If the quotient group $\mathcal{O}_K^\times/S_\alpha^\times$ is finite then $S_\alpha$ is an FFS.
\end{prop}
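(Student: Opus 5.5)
We use the characterization from~\cite{fHK92} recalled in the preliminaries: a monoid is an FFM if and only if every element has only finitely many divisors up to associates. So fix a nonzero $b \in S_\alpha$. We aim to show that the set of divisors of $b$ in $S_\alpha^*$ meets only finitely many classes modulo $S_\alpha^\times$.

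First, transfer the problem into $\mathcal O_K$. Since $\alpha$ is an algebraic integer, $S_\alpha \subseteq \zz[\alpha] \subseteq \mathcal O_K$. Suppose $c \mid_{S_\alpha} b$, say $b = cc'$ with $c,c' \in S_\alpha^*$. Then $c \mid_{\mathcal O_K} b$, so the principal ideal $c\mathcal O_K$ contains $b\mathcal O_K$. Because $\mathcal O_K$ is a Dedekind domain, $b\mathcal O_K$ has only finitely many ideal divisors, and each is determined by its prime factorization. Hence the divisors $c$ of $b$ fall into finitely many classes modulo $\mathcal O_K^\times$. More concretely, we can pick representatives $c_1, \dots, c_k \in \mathcal O_K$ such that every divisor $c$ of $b$ in $S_\alpha$ satisfies $c = \eta c_i$ for some $i$ and some $\eta \in \mathcal O_K^\times$.

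Second, pass from $\mathcal O_K^\times$ to $S_\alpha^\times$ using the hypothesis. Note that $S_\alpha^\times \subseteq \mathcal O_K^\times$: an element of $S_\alpha^\times$ and its inverse both lie in $\mathcal O_K$. Since $\mathcal O_K^\times / S_\alpha^\times$ is finite, choose coset representatives $\eta_1, \dots, \eta_m$. Then every divisor $c$ of $b$ has the form $c = \varepsilon \eta_j c_i$ with $\varepsilon \in S_\alpha^\times$.

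Now suppose $c = \varepsilon \eta_j c_i$ and $c' = \varepsilon' \eta_j c_i$ are two divisors of $b$ in $S_\alpha$ attached to the same pair $(i,j)$. Then $c' = (\varepsilon'\varepsilon^{-1})c$, and $\varepsilon'\varepsilon^{-1} \in S_\alpha^\times$, so $c$ and $c'$ are associates in $S_\alpha$. Therefore $b$ has at most $km$ divisors up to associates in $S_\alpha^*$, and $S_\alpha$ is an FFS. We should also confirm that $S_\alpha^*$ is cancellative, which holds because it sits inside a field, so the cited characterization applies.

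The only delicate point is this last comparison. Associativity in $S_\alpha$ requires the unit relating $c$ and $c'$ to lie in $S_\alpha^\times$, not merely in $\mathcal O_K^\times$. Fixing the coset representative $\eta_j$ handles this: two divisors with the same $(i,j)$ differ by a quotient of elements of $S_\alpha^\times$, and that quotient lies in $S_\alpha^\times$. The rest is the standard finiteness of ideal divisors in a Dedekind domain.
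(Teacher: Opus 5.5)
Your proposal is correct and follows essentially the same route as the paper: finiteness of the ideal divisors of $b\mathcal O_K$ in the Dedekind domain $\mathcal O_K$ gives finitely many $\mathcal O_K^\times$-classes of divisors, and the finite index of $S_\alpha^\times$ in $\mathcal O_K^\times$ then bounds the associate classes in $S_\alpha$. The only difference is that the paper also checks atomicity explicitly, via a chain of pairwise nonassociate divisors; you rightly leave this to the divisor-finiteness characterization, which covers it because divisor-finiteness already forces the ACCP.
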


\begin{proof}
	Set $S := S_\alpha$ and fix a nonzero $b \in S$. If $d \mid_{S_\alpha}b$, then $b=dc$ for some $c \in S$. Hence
	\[
		b\mathcal{O}_K = (d\mathcal{O}_K)(c\mathcal{O}_K).
	\]
	Thus, $d\mathcal{O}_K$ is an ideal divisor of $b\mathcal{O}_K$. Since $\mathcal{O}_K$ is a Dedekind domain, the nonzero ideal $b \mathcal{O}_K$ has only finitely many ideal divisors. Therefore the set
	\[
		\mathcal{D}(b) := \big\{d\mathcal{O}_K : d\in S \text{ and } d \mid_{S_\alpha} b \big\}
	\]
	is finite. Fix $I \in \mathcal{D}(b)$, and choose $d_0 \in S_\alpha$ such that $d_0 \mid_{S_\alpha} b$ and $d_0 \mathcal{O}_K = I$. If $d \in S_\alpha$ is another divisor of $b$ with $d\mathcal{O}_K = I$ then $d\mathcal{O}_K = d_0 \mathcal{O}_K$, so $d = ud_0$ for some $u \in \mathcal{O}_K^\times$. If $u_1S_\alpha^\times = u_2S_\alpha^\times$ then $u_1d_0$ and $u_2d_0$ are associates in~$S_\alpha$. Hence the associate classes of the divisors of $b$ lying above~$I$ are parametrized by a subset of $\mathcal{O}_K^\times/S_\alpha^\times$, which is finite by hypothesis. Since $\mathcal{D}(b)$ is finite, $b$ has only finitely many divisors in $S$ up to associates.

	It remains to verify atomicity. Suppose, to the contrary, that some nonzero nonunit $b_0 \in S_\alpha$ does not factor into atoms. Then $b_0$ is reducible, so we may write $b_0 = b_1c_1$, where $b_1$ and $c_1$ are nonunits and $b_1$ does not factor into atoms. Repeating this argument produces a sequence $(b_n)_{n \ge 0}$ such that $b_n = b_{n+1}c_{n+1}$ for every $n \in \nn_0$ such that every $b_n$ and $c_{n+1}$ is a nonunit. Each $b_n$ divides $b_0$, while $b_{n+1}$ is not associate to $b_n$ because the quotient $c_{n+1}$ is not a unit. Consequently, the elements $b_n$ are pairwise nonassociate divisors of $b_0$, contradicting the divisor-finiteness proved above. Thus, $S_\alpha$ is atomic. The divisor-finiteness characterization of finite factorization monoids~\cite{fHK92} now yields that $S_\alpha$ is an FFS.
\end{proof}

Every order in a number field is an FFD; this is already contained in \cite[Example~1]{AM96}. We include a direct proof for completeness.

\begin{theorem}\label{thm:orders are FFDs}
	Every order in a number field is an FFD.
\end{theorem}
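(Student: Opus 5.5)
The plan is to use the divisor-finiteness characterization of FFMs from~\cite{fHK92}: a monoid is an FFM if and only if it is atomic and every element has only finitely many divisors up to associates. The atomicity half can then be handled exactly as in the preceding proposition, since divisor-finiteness alone already excludes an infinite descending chain of proper factorizations. So let $R$ be an order in a number field $K$, and fix a nonzero $b\in R$. We must show that $b$ has only finitely many divisors in $R$ up to associates in~$R$.

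The first step is a norm argument. If $d\mid_R b$, then $d\mid b$ in $\OK$, so $N_{K/\qq}(d)$ divides $N_{K/\qq}(b)$ in~$\zz$ and takes only finitely many values. Alternatively, $d\OK$ is one of the finitely many ideal divisors of $b\OK$ in the Dedekind domain~$\OK$. Either way, divisors of $b$ lying over the same ideal $I=d_0\OK$ have the form $ud_0$ with $u\in\OK^\times$.

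The second step, which is the main point, is to pass from $\OK^\times$ to $R^\times$. For this I will show that $R^\times$ has finite index in $\OK^\times$. Let $f=[\OK:R]$. Then $f\OK\subseteq R$, so $R$ contains the conductor-type ideal $f\OK$. Consider the reduction map $\OK^\times\to(\OK/f\OK)^\times$, whose target is a finite group. Let $u\in\OK^\times$ lie in the kernel, so $u\equiv 1 \pmod{f\OK}$. Then $u\in 1+f\OK\subseteq R$, and likewise $u^{-1}\in 1+f\OK\subseteq R$, so $u\in R^\times$. Hence the kernel is contained in $R^\times$, and $[\OK^\times:R^\times]\le|(\OK/f\OK)^\times|<\infty$. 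This bypasses any need for Dirichlet's theorem.

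Finally, the divisors $ud_0$ of $b$ over $I$ fall into at most $[\OK^\times:R^\times]$ associate classes in $R$: if $u_1R^\times=u_2R^\times$, then $u_1d_0$ and $u_2d_0$ are $R$-associates. Since there are finitely many $I$, the element $b$ has finitely many divisors up to associates. Atomicity then follows by the chain argument already given in the previous proof, and~\cite{fHK92} yields that $R$ is an FFD.

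I expect no serious obstacle. The only care needed is the index-finiteness step, specifically checking that $f\OK\subseteq R$, which holds because $f$ annihilates the finite abelian group $\OK/R$.
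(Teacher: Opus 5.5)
Your argument is correct, but it takes a different route from the paper's.

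The paper never leaves $R$. Since $R$ is free of rank $[K:\qq]$ and multiplication by a nonzero $a\in R$ has determinant $N_{K/\qq}(a)$, one has $[R:aR]=|N_{K/\qq}(a)|<\infty$. Each divisor $d$ of $a$ yields an intermediate subgroup $aR\subseteq dR\subseteq R$. The finite group $R/aR$ has only finitely many subgroups, and two generators of the same principal ideal are associates. That proof uses neither the Dedekind property of $\OK$ nor any comparison of unit groups. It works verbatim for any domain in which nonzero principal ideals have finite index.

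You instead pass up to $\OK$, count the ideal divisors of $b\OK$ there, and come back down via $[\OK^\times:R^\times]\le|(\OK/f\OK)^\times|$, using $f\OK\subseteq R$ for $f=[\OK:R]$. Your kernel argument is sound. Observing that $u^{-1}\equiv 1 \pmod{f\OK}$ as well also avoids the integrality step used for the analogous point in the proof of Proposition~\ref{prop:Z-alpha-is-FFD}.

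What your route buys is explicit information: each ideal divisor of $b\OK$ accounts for at most $[\OK^\times:R^\times]$ associate classes of divisors in $R$. More to the point for this paper, it is a strategy that survives in the semiring setting. For $S_\alpha$ there is no finite additive group $S_\alpha/bS_\alpha$ whose subgroups can be counted. That is why the proposition just before this theorem, and the later Anderson--Mullins argument for $\zz[\alpha]$, rely on exactly your ``finite unit index over a well-behaved overring'' approach.

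One minor point concerns your first alternative in the norm step, that $N_{K/\qq}(d)$ takes finitely many values. This gives finitely many ideals $d\OK$ only once you also use that $\OK$ has finitely many ideals of each norm. Your second alternative, via the ideal divisors of $b\OK$, is complete as written.
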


\begin{proof}
    Let $R$ be an order in a number field $K$. For each nonzero $a\in R$, multiplication by $a$ is an injective endomorphism of the finite-rank free abelian group $R$. Its determinant is the nonzero integer $N_{K/\qq}(a)$, so
    \[
        [R:aR]=|N_{K/\qq}(a)|<\infty.
    \]
    If $d\mid_R a$, then $aR\subseteq dR\subseteq R$. The finite additive group $R/aR$ has only finitely many subgroups, so there are only finitely many possibilities for $dR$. Two nonzero generators of the same principal ideal are associates. Thus every nonzero element has finitely many divisors up to associates, and $R$ is an FFD.
\end{proof}

Given the relevance of ring theory, it is important in the scope of this paper to understand the factorization behavior of algebraic monogenic semidomains that are rings, namely, rings of the form $\zz[\alpha]$ for $alpha \in \aaa$. If $\alpha$ is an algebraic integer of degree $d$ then $\zz[\alpha]$ is a free $\zz$-module with basis $\{1,\alpha,\dots,\alpha^{d-1} \}$. Moreover, $\zz[\alpha]$ is contained in the ring of integers of $\qq(\alpha)$ and has full rank~$d$. Therefore $\zz[\alpha]$ is always an order in $\qq(\alpha)$, and so there is no algebraic integer $\alpha$ for which $\zz[\alpha]$ is not an order. Here the integrality assumption is essential as for $\alpha = 1/2$, the ring $\zz[\alpha]=\zz[1/2]$ is not an order in $\qq$: it is not contained in the ring of integers $\zz$ and is not finitely generated as a $\zz$-module. Orders of the form $\zz[\alpha]$ for some algebraic integer $\alpha$ are called {monogenic orders}. While every order in a quadratic number field $\mathbb{Q}(\sqrt{d})$ is monogenic, this behavior does not hold in general for number fields of degree $n \ge 3$. Indeed, even ring of integers fail to be monogenic in general. The following classic counterexample is due to Dedekind.

\begin{example}
	Consider the cubic field $K = \mathbb{Q}(\theta)$, where $\theta$ is a root of $f(x) = x^3 - x^2 - 2x - 8$. The maximal order $\mathcal{O}_K$ has a $\mathbb{Z}$-module basis $\{ 1, \theta, (\theta^2 + \theta)/2 \}$. Dedekind proved that the ideal $(2)$ generated by $2$ in $\OK$ factors into three distinct primes ideals. If $\mathcal{O}_K$ were equal to $\mathbb{Z}[\alpha]$ for some $\alpha$ then, by Kummer-Dedekind theorem, the factorization of $(2)$ in $\mathcal{O}_K$ would correspond to the factorization of the minimal polynomial $\bar{g}(x)$ of $\alpha$ in $\ff_2[x]$. However, a cubic polynomial $\bar{g}(x) \in \mathbb{F}_2[x]$ can have at most two roots in $\mathbb{F}_2$ (since $\mathbb{F}_2$ has only two elements, $0$ and $1$). Thus, $\bar{g}(x)$ can have at most two distinct linear factors in $\ff_2[x]$, meaning that $(2)$ could split into at most two prime factors of residue degree 1 in $\mathbb{Z}[\alpha]$. Because $(2)$ splits into three degree-1 prime factors in $\mathcal{O}_K$, no such generator $\alpha$ can exist. Hence the ring of integers $\OK$ is not monogenic.
\end{example}

Let us prove now that every simple ring extension of the prototypical integral domain $\zz$ is an FFD.

\begin{prop}\label{prop:Z-alpha-is-FFD}
	For every algebraic number $\alpha$, the integral domain $\zz[\alpha]$ is an FFD.
\end{prop}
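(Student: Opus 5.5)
The plan is to reduce to a finiteness statement about ideals, in the spirit of the proofs of Theorem~\ref{thm:orders are FFDs} and of the preceding proposition. Write $\omega_\alpha(x)=a_dx^d+\dots+a_0$ with $a_d\in\nn$. Set $\theta:=a_d\alpha$. Multiplying $\omega_\alpha(\alpha)=0$ by $a_d^{d-1}$ shows that $\theta$ is an algebraic integer, so $\zz[\theta]$ is an order in $K:=\qq(\alpha)$. Since $\zz[\alpha]\subseteq \zz[\theta][1/a_d]$ and $\theta\in\zz[\alpha]$, we will work inside the localization $T:=\zz[\theta][1/a_d]$. We have $\zz[\theta]\subseteq\zz[\alpha]\subseteq T$. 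The natural approach is the divisor-finiteness criterion of~\cite{fHK92}: for a nonzero $b\in\zz[\alpha]$, we must show that $b$ has only finitely many divisors in $\zz[\alpha]$ up to associates. Atomicity then follows exactly as in the preceding proposition.

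\textbf{Step 1: finitely many principal ideals.} If $c\mid_{\zz[\alpha]} b$, then $b\zz[\alpha]\subseteq c\zz[\alpha]\subseteq\zz[\alpha]$. I would bound the number of such $c\zz[\alpha]$ by showing that $\zz[\alpha]/b\zz[\alpha]$ is finite. Choose $N\in\nn$ with $Nb^{-1}\in\zz[\theta]$; for instance, take $N=|N_{K/\qq}(a_d^kb)|$ for suitable $k$. Then $N\in b\zz[\alpha]$, so $\zz[\alpha]/b\zz[\alpha]$ is a quotient of $\zz[\alpha]/N\zz[\alpha]$. The latter is finite: $\zz[\alpha]$ is a finitely generated module over $\zz[\alpha]\cap\qq$-type rings only in the integral case, so instead I would argue that $\zz[\alpha]/N\zz[\alpha]$ is generated as an abelian group by the images of $1,\alpha,\dots,\alpha^{n}$ for some bounded $n$. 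To see this, split $N=N_1N_2$, with $N_1$ supported on primes dividing $a_d$ and $N_2$ coprime to $a_d$. Modulo $N_2$, $a_d$ is invertible, so $\alpha\equiv a_d^{-1}\theta$ and the quotient is a quotient of $\zz[\theta]/N_2$. Modulo $N_1$, the relation $\omega_\alpha(\alpha)=0$ lets one express high powers of $\alpha$. This is the delicate part, and it is the \textbf{main obstacle}. The cleanest route I would try first is the known fact that $\zz[\alpha]=\{x\in K: \ldots\}$ is a Noetherian one-dimensional domain (it is a quotient of $\zz[x]$, which gives Noetherianity, and integral over $\zz[1/a_d]$ after localization), and that nonzero ideals in a one-dimensional Noetherian domain with finite residue fields have finite colength. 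Every maximal ideal of $\zz[\alpha]$ has a finite residue field, since finitely generated $\zz$-algebras that are fields are finite, and $\zz[\alpha]$ has Krull dimension $1$ because it is a finitely generated $\zz$-algebra of transcendence degree $0$ over $\qq$. Hence $\zz[\alpha]/b\zz[\alpha]$ is Artinian with finite residue fields, so it is finite. Therefore only finitely many ideals lie between $b\zz[\alpha]$ and $\zz[\alpha]$.

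\textbf{Step 2: conclusion.} Two generators of the same principal ideal of the domain $\zz[\alpha]$ are associates. Hence the divisors of $b$ fall into finitely many associate classes. Atomicity follows from this divisor-finiteness by the chain argument used in the preceding proposition, and~\cite{fHK92} yields that $\zz[\alpha]$ is an FFD. So the only real work is the finiteness of $\zz[\alpha]/b\zz[\alpha]$ in Step~1. I would justify it via the Noetherian, dimension-one, finite-residue-field argument rather than through explicit coefficient bookkeeping.
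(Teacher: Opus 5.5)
Your proof is correct, but it takes a different route from the paper. The paper never shows that $\zz[\alpha]/b\zz[\alpha]$ is finite. Instead it uses that $R=\zz[\alpha]\cong\zz[x]/(\omega_\alpha)$ is Noetherian and invokes the Anderson--Mullins criterion, verifying that $T^\times/R^\times$ is finite for every overring $T$ that is a finitely generated $R$-module. To do this it picks $m$ with $mT\subseteq R$, shows $T/mT$ is finite, and notes that the kernel of $T^\times\to(T/mT)^\times$ lies in $R^\times$. The finiteness of $T/mT$ comes from $R/pR\cong\ff_p[x]/(\overline{\omega_\alpha})$, which is finite because $\omega_\alpha$ is primitive, together with the $p$-power filtration and the Chinese remainder theorem.

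You instead prove that every nonzero principal ideal of $R$ has finite index, and then run the argument of Theorem~\ref{thm:orders are FFDs} verbatim. Your justification is sound: a finitely generated $\zz$-algebra that is a domain of transcendence degree $0$ over $\qq$ has Krull dimension $1$ and finite residue fields at its maximal ideals, so $R/bR$ is Artinian of finite length with finite composition factors.

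The elementary version you set aside in Step~1 also closes at once. A nonzero integer lies in $bR$: the constant term $c_0=\omega_b(0)\neq 0$ satisfies $c_0\in bR$, and your norm trick works too. The finiteness of $R/NR$ for every $N\in\nn$, which you flagged as the main obstacle, is exactly the Claim in the paper's proof.

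Your route is shorter and avoids overrings and the external criterion. It also yields the stronger statement that every nonzero ideal of $\zz[\alpha]$ has finite index. The paper's route produces as a by-product the finiteness of $[T^\times:R^\times]$ for module-finite overrings $T$. The paper reuses this in the proof of Theorem~\ref{thm:ffcharacterization} to obtain $[\mathcal O_{L,S}^\times:\zz[\alpha]^\times]<\infty$, which your argument would not supply.
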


\begin{proof}
	Fix $\alpha \in \aaa$, and set $R := \zz[\alpha]$. Let $g(x) \in \zz[x]$ be the primitive integer polynomial obtained by multiplying the minimal polynomial of $\alpha$ by a suitable positive rational number. The evaluation homomorphism $\zz[x] \to R$ has kernel $g(x) \zz[x]$, whence
	\[
		R \cong \zz[x]/(g).
	\]
	In particular, $R$ is a Noetherian domain and, as a result, atomic. In light of Anderson and Mullins~\cite[Theorem~6]{AM96}, we are done once we prove that the quotient group $T^\times/R^\times$ is finite for every overring $T$ of $R$ that is finitely generated as an $R$-module. Fix one of such overrings, say $T$, and then write
	\[
		T = t_1R + \cdots + t_sR
	\] 
	for some $t_1, \dots, t_s \in T$. Since $T \subseteq \qq(\alpha) = \qq[\alpha]$, there exists $m \in \nn$ such that $mt_i \in R$ for every $i \in \ldb 1,s \rdb$. Therefore $mT \subseteq R$. Let us now argue the following claim.
	\medskip

	\noindent \textsc{Claim.} $T/mT$ is finite.
	\smallskip

	\noindent \textsc{Proof of Claim.} We first show that the quotient $R/mR$ is finite and from there we will obtain that $T/mT$ is also finite. For every $p \in \pp$, the primitivity of $g(x)$ ensures that its image $\overline{g}(x)$ in $\ff_p[x]$ is nonzero, and so
	\[
		R/pR \cong \ff_p[x]/(\overline{g})
	\]
	is finite. For each $e \in \nn$, consider now the homomorphism $R/pR \to p^{e-1}R/p^eR$ defined via the assignment $r+pR \mapsto p^{e-1}r+p^eR$ for all $r \in R$, which is clearly surjective. In addition, the following short exact sequence
	\[
		0 \longrightarrow p^{e-1}R/p^eR \longrightarrow R/p^eR \longrightarrow R/p^{e-1}R\longrightarrow0,
	\]
	shows inductively that $R/p^eR$ is finite for every $e \in \nn$. The Chinese remainder theorem now yields that $R/mR$ is finite. Since $T$ is generated by $t_1,\dots,t_s$ as an $R$-module, the natural map $(R/mR)^s\longrightarrow T/mT$ defined as
	\[
		(r_1 + mR,\dots,r_s + mR) \longmapsto \sum_{i=1}^s t_ir_i+ mT,
	\]
	is a surjective homomorphism whose domain is finite, so the quotient group $T/mT$ is also finite, and the claim is established.
	\smallskip

	Now consider the reduction homomorphism $\rho \colon T^\times \longrightarrow(T/mT)^\times$, and let us show that $\ker\rho \subseteq R^\times$. Indeed, if $u \in \ker \rho$ then $u \in 1 + mT$ or, equivalently, $u-1 \in mT \subseteq R$, whence $u \in R$. Moreover, $u^{-1} \in T$ is integral over $R$ because $T$ is a finitely generated $R$-module. Therefore we can write
	\[
		(u^{-1})^n + r_{n-1}(u^{-1})^{n-1} + \dots + r_0 = 0
	\]
	for some $r_0, \dots, r_{n-1} \in R$ and, after multiplying both sides of the equality by $u^{n-1}$, we obtain that $u^{-1} \in R$. Hence $u \in R^\times$, proving that $\ker \rho \subseteq R^\times$.

	Finally, choose a set of representatives for the cosets of $R^\times$ in $T^\times$. If two representatives have the same image under $\rho$, then their quotient belongs to $\ker\rho\subseteq R^\times$, so they represent the same coset. Hence this set of representatives injects into the finite group $(T/mT)^\times$. Then it follows that quotient group $T^\times/R^\times$ is finite. Thus, in light of Anderson and Mullins' criterion, we conclude that $R = \zz[\alpha]$ is an FFD.
\end{proof}

Now we can identify two choices of the generator $\alpha \in \cc$ such that the monogenic semidomain $S_\alpha$ has the finite factorization property.

\begin{prop} \label{prop:S_alpha is an FFD if no + conj & an FFS if conj at least one}
	Let $\alpha$ b a nonzero algebraic number. Then the following statements hold.
	\begin{enumerate}
		\item If $\alpha$ has no positive conjugates then $S_\alpha$ is an FFD.
		\smallskip

		\item If $\alpha$ has a positive conjugate that is at least $1$ then $S_\alpha$ is an FFS.
	\end{enumerate}
\end{prop}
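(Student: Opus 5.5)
For part (1), I would reduce directly to the ring case. Since $\alpha \neq 0$, the number $0$ is not a conjugate of~$\alpha$. The hypothesis that $\alpha$ has no positive conjugates therefore says that no conjugate of $\alpha$ lies in $\rr_{\ge 0}$. Proposition~\ref{prop:monogenic semidomains that are ID} then yields $S_\alpha = \zz[\alpha]$, an integral domain. Proposition~\ref{prop:Z-alpha-is-FFD} shows that $\zz[\alpha]$ is an FFD, which settles~(1).

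For part (2), I would first replace $\alpha$ by its positive conjugate $\rho \ge 1$. As in the proof of Proposition~\ref{prop:monogenic semidomain of char 0 are realizable in the complex plane}, the field isomorphism $\qq(\alpha)\to\qq(\rho)$ restricts to a semiring isomorphism $S_\alpha \cong S_\rho$, and being an FFS is invariant under isomorphism. So we may assume that $\alpha$ is real with $\alpha \ge 1$. If $\alpha = 1$, then $S_\alpha = \nn_0$, whose multiplicative monoid is factorial and hence an FFM. So assume $\alpha > 1$.

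The key observation is that every nonzero element satisfies $f(\alpha) \ge 1$. Indeed, if $0 \ne f \in \nn_0[x]$, then $f(\alpha) \ge \sum_i [x^i]f \ge 1$ because $\alpha^i \ge 1$ for all $i$. Two consequences follow.
\begin{enumerate}
	\item If $u \in S_\alpha^\times$, then both $u$ and $u^{-1}$ are at least $1$, so $u = 1$. Thus $S_\alpha^*$ is reduced, and "up to associates" just means "as elements".
	\item If $d \mid_{S_\alpha} b$ with $b = dc$ and $c \ge 1$, then $1 \le d \le b$.
\end{enumerate}
I would then show that $\{s \in S_\alpha : s \le b\}$ is finite. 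Suppose $s = f(\alpha) \le b$. Each monomial $c_i\alpha^i$ appearing in $f$ satisfies $c_i\alpha^i \le b$. This forces $c_i \le b$ and $i \le \log b/\log\alpha$. Hence only finitely many polynomials $f$ qualify, and so only finitely many values $s$. In particular, each nonzero $b$ has only finitely many divisors.

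It remains to check atomicity, after which the divisor-finiteness characterization of FFMs~\cite{fHK92} gives that $S_\alpha$ is an FFS. I would rerun the argument from the proposition on $\mathcal O_K^\times/S_\alpha^\times$. A nonzero nonunit with no factorization into atoms produces an infinite sequence $b_0, b_1, \dots$ of pairwise nonassociate, here pairwise distinct, divisors of $b_0$. This contradicts finiteness. Alternatively, one can argue directly: in $b_n = b_{n+1}c_{n+1}$ with $c_{n+1} \ne 1$ we have $c_{n+1} > 1$, so the $b_n$ strictly decrease within a finite set. The only mildly delicate step is this finiteness bound on the elements below $b$, and it is handled by the estimate above.
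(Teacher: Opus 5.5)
Your proof is correct. Part~(1) is the paper's argument, and your remark that $\alpha\neq0$ keeps $0$ from being a conjugate is a detail the paper leaves implicit.

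For part~(2) you take a different and more elementary route. The paper passes to the largest positive conjugate and notes that $S_\alpha^*$ is isomorphic via $\ln$ to an increasing positive monoid of $\rr$. It then invokes Gotti's theorem that increasing positive monoids are FF-monoids~\cite{fG19}. You instead argue directly from $f(\alpha)\ge1$ for every nonzero $f\in\nn_0[x]$. This makes $S_\alpha^*$ reduced and confines the divisors of $b$ to the set $S_\alpha\cap[1,b]$. That set is finite because each monomial $c_i\alpha^i$ of a representing polynomial is at most $b$.

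Your argument is self-contained. Its finiteness bound is also exactly what justifies the paper's unexplained assertion that $\ln S_\alpha^*$ is increasing, since it lets the elements of $S_\alpha^*$ be listed in increasing order. You also handle the degenerate case $\alpha=1$, which the paper's ``we can assume that $\alpha>1$'' passes over.

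The paper's citation is shorter and places the result inside a general theorem covering a broader class of positive monoids. Nothing beyond your elementary estimate is needed for this statement, though.
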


\begin{proof} 
	(1) Assume that $\alpha$ does not have a positive conjugate. Then $S_\alpha$ is a ring, which means that $S_\alpha = \zz[\alpha]$. Thus, it follows from Proposition~\ref{prop:Z-alpha-is-FFD} that $S_\alpha$ is an FFD.
	\smallskip

	(2) After replacing $\alpha$ by its largest positive conjugate, we can assume that $\alpha > 1$, and we can do such a replacement because conjugate algebraic numbers generate isomorphic monogenic semidomains. As $\alpha > 1$ we see that $S^*_\alpha$ is isomorphic to the increasing positive monoid $\ln S^*_\alpha$ of the ordered field $\rr$, whence it follows from \cite[Theorem~5.6]{fG19} that $S_\alpha$ is an FFS. 
\end{proof}

\medskip
\subsection{The Bounded Factorization Property}

Next, we prove that in the class of algebraic monogenic semidomain, the bounded factorization property is equivalent to the ACCP. For each positive algebraic number $\alpha$, we set
\[
	D_\alpha:=\{\sigma:\qq(\alpha)\hookrightarrow\cc:
	\sigma\text{ fixes }\qq,\ |\sigma(\alpha)|\le\alpha\},
	\qquad \|z\|_\alpha:=\max_{\sigma\in D_\alpha}|\sigma(z)|
\]
for all $z \in \qq(\alpha)$. A \emph{Perron number} is a real algebraic integer $\lambda > 1$ whose modulus strictly exceeds the modulus of every other conjugate.

\begin{lemma}\label{lem:neighborhoodmembership}    
    Let $\alpha$ be an algebraic number with $\alpha \in (0,1)$ and having no larger positive conjugate. Then there exist $h \in S_\alpha^*$ and $\epsilon > 0$ such that $hz\in S_\alpha$ whenever $z \in \Za$ and $\|z-1\|_\alpha < \epsilon$.
\end{lemma}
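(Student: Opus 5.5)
The plan is to reduce the statement to a positivity certificate for polynomials. The key step is to multiply a fixed polynomial representative of $z$ by a fixed element $h=H(\alpha)$ with $H\in\nn_0[x]$, and then absorb the negative coefficients using multiples of $\omega_\alpha$.

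\textbf{Step 1 (dominant root).} Let $\omega_\alpha=p_\alpha-q_\alpha$ be the minimal pair. By hypothesis, $\alpha$ is the largest positive root of $\omega_\alpha$. Split the conjugates into those indexed by $D_\alpha$, which have modulus $\le\alpha$, and the rest, which have modulus $>\alpha$. I would first build a polynomial $P(x)\in\nn_0[x]$ with $P(\alpha)>0$ for which $\alpha$ is a strictly dominant root in the following sense: every complex root of $\omega_\alpha$ of modulus $\le\alpha$ other than $\alpha$ must lie strictly inside the circle $|x|=\alpha$, or at least be off the positive axis. The intended tool is the P\'olya-type trick already used in Proposition~\ref{prop:monogenic semidomains that are ID}. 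Concretely, I would apply P\'olya's theorem to the homogenization of the quotient
\[
\omega_\alpha(x)\,\big/\,\big((x-\alpha)\cdot\textstyle\prod_{\sigma\notin D_\alpha}(x-\sigma(\alpha))\big),
\]
taken after clearing the factors over $\rr$. The aim is an identity showing that the ``small'' part of $\omega_\alpha$ times $(1+x)^N$ has a sign pattern controlled by the root $\alpha$.

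\textbf{Step 2 (uniform representatives).} The only information about $z$ is the bound $\|z-1\|_\alpha<\epsilon$, which controls only the embeddings in $D_\alpha$. So every quantity I use must depend only on the values $\sigma(z)$ for $\sigma\in D_\alpha$. To arrange this, I would write $z=g(\alpha)$ with $g\in\zz[x]$. I would then reduce $g$ modulo the factor of $\omega_\alpha$ whose roots are the conjugates outside $D_\alpha$. This replaces $g$ by a power-series-type expansion in which only the $D_\alpha$-values matter.

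In practice, I would form the Laurent expansion of $g/\omega_\alpha$ around the annulus between $\max_{\sigma\in D_\alpha}|\sigma(\alpha)|=\alpha$ and the next larger modulus. Its coefficients are given by Cauchy integrals over a circle of radius slightly larger than $\alpha$. They are therefore bounded in terms of $\|z-1\|_\alpha$ plus a fixed contribution coming from $z=1$.

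\textbf{Step 3 (absorbing the error).} For $z=1$, choose $H\in\nn_0[x]$ with $H(\alpha)>0$ such that $H$ plus a suitable multiple of $\omega_\alpha$ has all coefficients strictly positive in a fixed finite window, together with geometrically decaying tails. Then, for $\|z-1\|_\alpha<\epsilon$, the perturbation changes these coefficients by at most $C\epsilon$ times the same geometric profile. Here $C$ depends only on $\alpha$ and $H$. Taking $\epsilon<1/C$ keeps every coefficient nonnegative, and this exhibits $hz=(Hg)(\alpha)$ as a value of a polynomial in $\nn_0[x]$, that is, $hz\in S_\alpha$.

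\textbf{Main obstacle.} I expect the crux to be Step 3, and specifically the requirement that the bound be uniform over all $z\in\Za$. Elements of $\Za$ can have arbitrarily large denominators coming from the leading coefficient of $\omega_\alpha$. Their non-$D_\alpha$ conjugates are also completely uncontrolled. One must therefore check that the construction genuinely discards those embeddings: they should only enter through the choice of the multiple of $\omega_\alpha$, never through the final positivity estimate.

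If the Laurent-expansion approach gets tangled at this point, my fallback is to invoke a Handelman-type eventual-positivity theorem. That theorem says that a polynomial which is positive at the largest positive root, and dominates on the relevant circle, becomes positive after multiplication by a suitable element of $\nn_0[x]$ modulo $\omega_\alpha$. I would then check by compactness that this multiplier can be chosen uniformly on the $\epsilon$-ball around $1$.
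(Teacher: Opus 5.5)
There is a genuine gap. Your outline has the same shape as the paper's proof: fix $h$, find a good representative of the perturbation, and absorb it using multiples of $\omega_\alpha$. But neither of the two steps that make this work is actually supplied.

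\textbf{Step~2 does not control the conjugates outside $D_\alpha$.} Your claim that the Laurent coefficients of $g/\omega_\alpha$ just outside $|x|=\alpha$ are bounded in terms of $\|z-1\|_\alpha$ is false. A Cauchy integral over $|x|=r>\alpha$ sees the values of $g$ on that whole circle, not only the numbers $\sigma(z)$ with $\sigma\in D_\alpha$. By residues, only the negative-index coefficients $\sum_{|\rho|\le\alpha}g(\rho)\rho^{k}/\omega_\alpha'(\rho)$ are controlled. The nonnegative-index part comes from the polynomial quotient of $g$ by $\omega_\alpha$ and from the residues at conjugates with $|\rho|>\alpha$, which is exactly the uncontrolled data. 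Truncating that part far out helps only when every conjugate outside $D_\alpha$ has modulus $>1$. The hypotheses allow moduli in $(\alpha,1]$, e.g.\ for the real root of $10x^3-3x^2+5x-1$, whose nonreal conjugates have modulus about $0.69$. Reducing $g$ modulo the real factor of $\omega_\alpha$ carrying the large roots also destroys integrality.

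What the paper proves instead is a uniform representation claim. There is $M$ such that every $w\in\mathbb{Z}[\alpha]$ with $\|w\|_\alpha<1$ equals $f(\alpha)$ for some $f\in\zz[x]$ with all coefficients $\ge -M$. The tool is \cite[Corollary~2.4]{NW98}, which gives $P\in\rr_{\ge0}[x]$ with $|P(\rho)|<1$ at conjugates of modulus $\le\alpha$ and $|P(\rho)-g(\rho)|<1$ at the others. One then interpolates the bounded error $g-P$ by some $E$ of degree $<\deg\omega_\alpha$ and writes $P+E-g=\omega_\alpha Q$. Rounding $Q$ to $\tilde Q\in\zz[x]$, one takes $f=g+\tilde Q\omega_\alpha=P+E+\omega_\alpha(\tilde Q-Q)$. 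This is where the hypothesis of no larger positive conjugate enters: a polynomial with nonnegative coefficients cannot take prescribed values at a positive real point. Your plan never uses the hypothesis this way. Step~1 is also not executable as written: the quotient you feed to P\'olya vanishes at every smaller positive conjugate of $\alpha$, so its homogenization is not strictly positive on the simplex when such a conjugate exists.

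\textbf{Step~3 cannot absorb the perturbation.} Even granting bounded-below representatives, their degrees are unbounded as $z$ varies, and integer coefficients cannot decay geometrically. Already for $\alpha=1/2$, every integer representative of $2^{-m}$ has degree at least $m$. So a representative of $h$ that is positive on a fixed window with geometrically decaying tails cannot absorb them. You need representatives of $h$ with all coefficients $\ge M$ on arbitrarily long initial windows.

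The paper gets these as follows. It chooses $c$ with $1-x+c\,\omega_\alpha^2>0$ on $\rr_{\ge0}$, which is possible because $\omega_\alpha$ has no root in $[1,\infty)$ (using $\alpha<1$). P\'olya's theorem then gives $A=q(1+x)^n$ with $B=A\,(1-x+c\,\omega_\alpha^2)$ having all coefficients $\ge M$. Iterating $A(\alpha)=B(\alpha)+\alpha A(\alpha)$ yields $C_N=B\,(1+x+\dots+x^N)+x^{N+1}A$. Then $h=A(\alpha)$ and $\epsilon=\|h\|_\alpha^{-1}$ work: $\|h(z-1)\|_\alpha<1$, and $C_N$ plus the bounded-below representative of $h(z-1)$ lies in $\nn_0[x]$ for $N$ large.

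Your Handelman-type fallback does not close the gap either. The set $\{z\in\mathbb{Z}[\alpha]:\|z-1\|_\alpha<\epsilon\}$ is not compact, since the embeddings outside $D_\alpha$ are unbounded on it. Uniformity of the multiplier therefore cannot come from compactness; it has to come from a uniform statement like the representation claim above.
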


\begin{proof}
    We first claim that there exists $M \in \mathbb N$ so that if $w\in \Z[\alpha]$ and $\|w \|_\alpha <1$, then there exists $f(x)\in \Z[x]$ with every coefficient at least $-M$ and $f(\alpha)=w$. Fix such a $w$ and choose $g(x)\in \Z[x]$ so that $g(\alpha)=w$, and let $\Sigma$ be the set of conjugates of $\alpha$. Apply \cite[Corollary~2.4]{NW98} to the function on $\Sigma$ which is 0 at the conjugates of $\alpha$ with magnitude at most $\alpha$, and is equal to $g$ for the other conjugates. In the notation of that result, $\kappa(\Sigma)=\alpha$ and, for $V=\{z:|z|>\alpha \}$, the set $\cc\setminus\Sigma_\kappa = V \setminus \Sigma$ is connected and unbounded, while $V \cap \Sigma$ is finite. The corollary therefore gives $P(x) \in \R_{\ge 0}[x]$ so that, for every $z\in \Sigma$, 
    \[|P(z)|<1 \quad \text{if }|z| \le \alpha, \quad \quad |P(z)-g(z)|<1 \quad \text{if } |z|>\alpha.\] Let $E(x)\in \rr[x]$ be the remainder of dividing $g-P$ by $m_\alpha(x)$. Equivalently, it is the polynomial of degree less than $\deg m_\alpha(x)$ that interpolates the values of $g-P$ on $\Sigma$. Since $\Sigma$ is fixed, and the magnitude of $g-P$ evaluated at $\Sigma$ is bounded by $2$ (recall that $\|w\|_\alpha<1)$, the coefficients of $E$ are bounded independently of $w$. Now $P+E-g$ vanishes on $\Sigma$, so write $P+E-g=\omega_\alpha Q$ with $Q\in \rr[x]$. Round the coefficients of $Q$ to obtain $\tilde{Q}(x)\in \Z[x]$, and let $f=g+\tilde{Q}\omega_\alpha$. Then $f(\alpha)=w$ and $f=P+E+\omega_\alpha (\tilde{Q}-Q)$. The coefficients of the last two terms are uniformly bounded, and $P$ has nonnegative coefficients, so the coefficients of $f$ are bounded below as claimed.

    Recall that every nonnegative zero of $\omega_\alpha$ is smaller than $1$, so $\omega_\alpha$ has no roots in $[1,\infty)$. Since the function $(x-1)/\omega_\alpha(x)^2$ is continuous and tends to $0$ at infinity, it is bounded in $[1,\infty)$. Hence there is some $c \in \nn$ so that $1-x+c\omega_\alpha(x)^2$ is positive on $\rr_{\ge 0}$. P\'olya's theorem gives $n \in \nn_0$ so that
	\[
		(1+x)^n(1 - x + c\omega_\alpha(x)^2)
	\]
	has strictly positive coefficients. Now choose $q \in \nn$ so that for $A(x) \coloneq q(1+x)^n$ and $B(x)\coloneq A(x)(1-x+c\omega_\alpha (x)^2)$, we have that every coefficient of $B$ is at least $M$. By iteratively applying $A(\alpha)=B(\alpha)+\alpha A(\alpha)$, we get that for each $N \in \nn$ the polynomial\[C_N(x) \coloneq B(x)(1+x+\cdots +x^N)+x^{N+1}A(x)\] gives $A(\alpha)$ when evaluated at $\alpha$. For every integer $j$ with $0\le j\le N+\deg B$, the coefficient $[x^j]C_N$ is at least $M$: some term of $B(x)(1+x+\cdots+x^N)$ contributes a coefficient of $B$ at that degree. Moreover, $\deg B=\deg A+2\deg\omega_\alpha>\deg A+1$, so $\deg C_N=N+\deg B$.

    Now let $h = A(\alpha)$, and let $\varepsilon = \|h\|_\alpha^{-1}$. If $\|t-1\|_\alpha < \varepsilon$ then $\| h(t-1) \|_\alpha < 1$. Take the representative of $h(t-1)$ supplied by the earlier claim, and pick $N$ so that its degree is at most $N + \deg B$. Adding this to $C_N(x)$ gives a polynomial in $\nn_0[x]$ representing $h(t-1)+h = ht$, so $ht \in S_\alpha$.
\end{proof}

\begin{lemma}\label{lem:divisorbound}
    Let $\alpha$ be an algebraic number with $\alpha \in (0,1)$ and no larger positive conjugate. For each $x \in S_\alpha^*$, there exists $C \ge 1$ such that each divisor of $x$ in $S_\alpha$ is associate in $S_\alpha$ to a divisor $y$ satisfying
    \[
        C^{-1} \le |\sigma(y)| \le C
    \]
	for all $\sigma \in D_\alpha$.
\end{lemma}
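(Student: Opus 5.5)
The plan is to place divisors of $x$ inside a finitely generated group of $S$-units, and then use Lemma~\ref{lem:neighborhoodmembership} to move each divisor, by a unit of $S_\alpha$, into a bounded region at the places in $D_\alpha$.

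\emph{Reduction to finitely many ideals.} Write $\alpha=\omega/d$ with $\omega$ an algebraic integer and $d\in\nn$. As in the proof of Theorem~\ref{thm:group of unity main}, let $K=\qq(\alpha)$, let $S$ be the set of primes of $\OK$ dividing $d$, and note $S_\alpha\subseteq\zz[\alpha]\subseteq\mathcal O_{K,S}$. If $x=yz$ with $y,z\in S_\alpha$, then $y\mathcal O_{K,S}$ divides $x\mathcal O_{K,S}$ in the Dedekind domain $\mathcal O_{K,S}$. So there are finitely many elements $y_1,\dots,y_k\in S_\alpha^*$ such that every divisor $y$ equals $y_i\eta$ with $\eta\in\mathcal O_{K,S}^\times$. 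It therefore suffices, for each fixed $i$, to show the following: whenever $y_i\eta$ divides $x$ in $S_\alpha$, there is $u\in S_\alpha^\times$ with $|\sigma(u\eta)|$ bounded above and below at every $\sigma\in D_\alpha$ by constants independent of $\eta$.

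\emph{A free upper bound.} For $w=f(\alpha)$ with $f\in\nn_0[x]$ and $\sigma\in D_\alpha$, we have $|\sigma(w)|\le f(|\sigma(\alpha)|)\le f(\alpha)=w$. Hence $|\sigma(y)|\le y$ and $|\sigma(z)|\le z$. This gives the lower bound
\[
|\sigma(y)|=\frac{|\sigma(x)|}{|\sigma(z)|}\ge\frac{|\sigma(x)|}{z}=\frac{|\sigma(x)|\,y}{x},
\]
and symmetrically $|\sigma(y)|\le y$. So once the real value $y$ of the divisor (the identity embedding, which lies in $D_\alpha$) is confined to an interval $[C_0^{-1},C_0]$, both bounds follow with $C=C_0\max_\sigma\max\{x/|\sigma(x)|,1\}$. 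The task thus reduces to normalizing the real size of $y$, and of the cofactor $z$, by units of $S_\alpha$.

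\emph{Normalization via Lemma~\ref{lem:neighborhoodmembership}, the expected main obstacle.} Take $h,\epsilon$ as in Lemma~\ref{lem:neighborhoodmembership}. By Theorem~\ref{thm:s unit theorem}, the log-embedding of $\mathcal O_{K,S}^\times$ is a full lattice. The subgroup $\Gamma$ of $\zz[\alpha]^\times\cap\mathcal O_{K,S}^\times$ consisting of $S$-units that are congruent to $1$ modulo a suitable power of $d$ lies in $\zz[\alpha]$ and has finite index. I would first try to show that any $\varepsilon\in\Gamma$ with $\|\varepsilon-1\|_\alpha<\epsilon$, and also with $\|\varepsilon^{-1}-1\|_\alpha<\epsilon$, satisfies $h\varepsilon^{\pm1}\in S_\alpha$. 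Then I would use the identity $x=(h\varepsilon y)(h\varepsilon^{-1}z)/h^2$ together with the divisibility of $x$ to absorb the fixed factor $h^2$. Doing this requires showing that $h$ divides a bounded power of $x$, or otherwise enlarging the finite list $y_i$ to absorb $h$.

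The genuinely hard point is producing enough units $u\in S_\alpha^\times$ whose logs at the places of $D_\alpha$ span, so that any $\eta$ can be shifted into a fundamental domain. Only elements near $1$ at $D_\alpha$ are handled by Lemma~\ref{lem:neighborhoodmembership}, and their products must stay in $S_\alpha$. I would attempt this by taking $u=\varepsilon^m$ with $\varepsilon\in\Gamma$ large at the places outside $D_\alpha$ and controlled at those in $D_\alpha$. Because $\alpha$ is the largest positive conjugate, the places outside $D_\alpha$ carry no positivity constraint, and this flexibility is what I would use. If this fails, the fallback is to bound $y$ and $z$ directly: from $yz=x$ and $y,z\ge\min(S_\alpha^*\cap\text{appropriate ideal class})$, and argue that divisors with $y$ too small or too large are associate, via Lemma~\ref{lem:neighborhoodmembership}, to ones in the bounded window.
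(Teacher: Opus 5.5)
You correctly reduce the problem to bounding the real value of an associate, but the normalization you propose for that step cannot work, and the missing argument is a direct bound on divisors when $S_\alpha^\times$ is trivial.

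\textbf{What is right.} Your ``free upper bound'' step matches the paper's opening. Since $|\sigma(y)|\le y$ and $|\sigma(z)|\le z$ for $\sigma\in D_\alpha$, you get $y\,|\sigma(x)|/x\le|\sigma(y)|\le y$. So everything reduces to confining the real value of a suitable $S_\alpha$-associate of $y$ to a fixed interval.

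\textbf{Why your normalization fails.} Associates in $S_\alpha$ differ by elements of $S_\alpha^\times$, and Lemma~\ref{lem:neighborhoodmembership} produces no such elements. It only gives $h\varepsilon^{\pm1}\in S_\alpha$. Moreover, $h\varepsilon y$ divides $h^2x$ and is not an associate of $y$, so the ``absorb $h^2$'' step has nothing behind it. More decisively, no normalization by units can work in general. For $\alpha=2/3$, an element of $S_\alpha$ below $1$ has zero constant term and hence positive $2$-adic valuation, so it cannot have an inverse in $S_\alpha$; thus $S_\alpha^\times=\{1\}$. In that case the lemma asserts outright that the divisors of $x$ themselves have real values bounded above and away from $0$. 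Your fallback ``$y,z\ge\min(S_\alpha^*\cap\cdots)$'' only restates this, since $\alpha^n\in S_\alpha^*$ tends to $0$. Aiming for units whose logarithms span all coordinates at $D_\alpha$ is also unnecessary after your own reduction, and impossible in general. In the Perron case with $g\ge2$, the proof of Theorem~\ref{thm:ffcharacterization} shows these logarithms lie on a line, although $D_\alpha$ represents at least two places.

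\textbf{The missing idea: a case split on $S_\alpha^\times$ plus a least-exponent estimate.}

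\emph{If $S_\alpha^\times\ne\{1\}$:} Proposition~\ref{prop:nontrivial units when N_0[alpha] is not an ID} gives $\alpha\in S_\alpha^\times$. A power of $\alpha$ then moves $y$ into $[1,\alpha^{-1})$.

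\emph{If $S_\alpha^\times=\{1\}$:} write $y=f(\alpha)$ and $z=g(\alpha)$ with $f,g\in\nn_0[x]$ having least exponents $j,k$. Then $y\ge\alpha^j$, $z\ge\alpha^k$, and $y=x/z\le x\alpha^{-k}$. So it suffices to bound $j+k$ in terms of $x$ alone.
\begin{itemize}
\item When $\alpha\notin\zz[\alpha]^\times$, some prime $\mathfrak p$ has $v_{\mathfrak p}(\alpha)>0$. Otherwise $\alpha^{-1}$ is integral, and its monic equation puts it in $\zz[\alpha]$. Then $v_{\mathfrak p}(x)\ge(j+k)v_{\mathfrak p}(\alpha)$.
\item When $\alpha\in\zz[\alpha]^\times$, some conjugate $\rho$ satisfies $|\rho|<\alpha$. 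Otherwise $\alpha^{-1}$ is an algebraic integer with no conjugate of larger modulus. Boyd's theorem together with the antimatter criterion of Chen, Gotti, Lu, and Yao would then make $S_\alpha$ additively antimatter, contradicting Proposition~\ref{prop:nontrivial unit group additive antimatter}. For an embedding $\tau$ with $\tau(\alpha)=\rho$, expanding $fg$ gives $0<|\tau(x)|\le(|\rho|/\alpha)^{j+k}x$.
\end{itemize}

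Your $S$-unit reduction to finitely many ideals is harmless but plays no role in this.
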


\begin{proof}
    If $x = yz$ for some $y,z \in S_\alpha^*$ then nonnegative coefficients give $|\sigma(y)| \le y$ and $|\sigma(z)| \le z$ for every $\sigma \in D_\alpha$. Consequently,
    \[
        y \frac{|\sigma(x)|}{x} \le |\sigma(y)| \le y.
    \]
    It suffices to bound the positive real value of a suitable associate of $y$ above and away from zero.

    If the group $S_\alpha^\times$ is nontrivial then $\alpha$ is a unit. Multiplying $y$ by an integral power of $\alpha$ gives an associate divisor in $[1,\alpha^{-1})$, as required. 

    Assume therefore that $S_\alpha^\times$ is the trivial group and so that $S_\alpha$ is reduced. First, consider the case where $\alpha \notin \zz[\alpha]^\times$. Some nonzero prime-ideal valuation $v$ of $\qq(\alpha)$ satisfies $v(\alpha) > 0$: otherwise $\alpha^{-1}$ would be integral over $\zz$, and its monic equation would imply $\alpha^{-1} \in \zz[\alpha]$. At this place, $v(s)\ge 0$ for every nonzero $s \in S_\alpha$. Choose $f,g \in\nn_0[x]$ with $y = f(\alpha)$ and $z = g(\alpha)$, and then set $j := \min \supp f$ and $k := \min \supp g$. Then
    \[
        v(x) = v(y) + v(z) \ge (j+k)v(\alpha).
    \]
    Hence $j,k \le J := \lfloor v(x)/v(\alpha)\rfloor$, and
    \[
        y \ge \alpha^j \ge \alpha^J, \qquad z \ge \alpha^J, \qquad y \le x \alpha^{-J}.
    \]

    Finally, suppose the group $S_\alpha^\times$ is trivial but $\alpha \in \zz[\alpha]^\times$. We claim that some conjugate $\rho$ of $\alpha$ satisfies $|\rho| < \alpha$. Otherwise all conjugates of $\alpha^{-1}$ have modulus at most $\alpha^{-1}$. Since $\alpha F(\alpha) = 1$ for some $F \in \zz[x]$, Gauss's lemma ensures that $|\omega_\alpha(0)| = 1$, so $\alpha^{-1}$ is an algebraic integer. Boyd's theorem~\cite{dB94} implies that its simplified minimal polynomial has a Perron root. There is no second positive conjugate of $\alpha$: maximality excludes larger ones, and the assumed modulus bound excludes smaller ones. Corollary~4.6 of~\cite{CGLY26} therefore makes $S_\alpha$ additively antimatter, contradicting Proposition~\ref{prop:nontrivial unit group additive antimatter}. This proves the claim.
    \smallskip

    Fix an embedding $\tau$ with $\tau(\alpha)=\rho$, and again write $y=f(\alpha)$, $z=g(\alpha)$, with least exponents $j,k$. Put $r=|\rho|/\alpha\in(0,1)$. The polynomial $F=fg$ has least exponent $j+k$, and
    \[
        0 < |\tau(x)|=|F(\rho)|\le F(|\rho|)
        \le r^{j+k}F(\alpha)=r^{j+k}x.
    \]
    Thus $j+k\le\log(|\tau(x)|/x)/\log r$. Taking the integer part of this fixed nonnegative bound gives $J$ with $j,k\le J$, and the preceding inequalities $\alpha^J\le y\le x\alpha^{-J}$ apply again. The initial comparison with $|\sigma(y)|$ finishes the proof.
\end{proof}

\begin{theorem}\label{thm:bfiffaccp}
    The following conditions are equivalent for any $\alpha \in \aaa$.
	\begin{enumerate}
		\item[(a)] $S_\alpha$ is a BFS.
		\smallskip

		\item[(b)] $S_\alpha$ satisfies ACCP.
	\end{enumerate}
\end{theorem}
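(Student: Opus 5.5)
The implication (a)$\Rightarrow$(b) holds for every monoid, as recalled in the preliminaries, so the work is in (b)$\Rightarrow$(a). We first dispose of the easy cases. If $\alpha$ has no positive conjugate, then $S_\alpha=\zz[\alpha]$ is an FFD by Proposition~\ref{prop:S_alpha is an FFD if no + conj & an FFS if conj at least one}(1), hence a BFS. If $\alpha$ has a positive conjugate $\ge 1$, then $S_\alpha$ is an FFS by part~(2) of the same proposition. If $\alpha=0$ or $\alpha=1$, then $S_\alpha$ is $\nn_0$ and is trivially a BFS. Since conjugates generate isomorphic semidomains, the remaining case is that all positive conjugates of $\alpha$ lie in $(0,1)$; we replace $\alpha$ by the largest of them. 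Then $\alpha\in(0,1)$ and $\alpha$ has no larger positive conjugate, so Lemmas~\ref{lem:neighborhoodmembership} and~\ref{lem:divisorbound} are available.

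In this case we assume ACCP and argue by contradiction. Suppose some $x\in S_\alpha^*$ has arbitrarily long factorizations $x=a_1\cdots a_n$ into atoms. By Lemma~\ref{lem:divisorbound}, every partial product $a_1\cdots a_k$, being a divisor of $x$, is associate to a divisor $y$ with $C^{-1}\le|\sigma(y)|\le C$ for all $\sigma\in D_\alpha$. We would like a compactness argument. Consider the vectors $(\sigma(y))_{\sigma\in D_\alpha}$ for divisors $y$ of $x$ normalized as above; they lie in a compact set. The plan is to use a pigeonhole or König-type argument on longer and longer factorizations to extract two divisors $y\mid y'$ (strictly, up to units) whose normalized images are very close. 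Their quotient $z=y'/y\in\qq(\alpha)$ then satisfies $\|z-1\|_\alpha<\epsilon$. Since $z$ need not lie in $\zz[\alpha]$, we first clear denominators: divisors of $x$ in $S_\alpha$ lie in $\zz[\alpha]$, and by refining the pigeonhole we can also force the quotient to lie in $\zz[\alpha]$.

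Lemma~\ref{lem:neighborhoodmembership} then gives $hz\in S_\alpha$ for a fixed $h$. This yields divisibility relations that chain together: from one long factorization we try to build an infinite sequence $y_1,y_2,\dots$ of divisors of $hx$ (or of $h^kx$, suitably controlled) with $y_{i+1}\mid y_i$ properly. That is a strictly ascending chain of principal ideals, contradicting ACCP. An alternative route to the same contradiction is to show that a lack of bounded lengths forces some divisor to be divisible by nonunits of arbitrarily large multiplicity, which again violates ACCP.

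The main obstacle is converting ``long factorizations'' into an infinite strictly ascending chain. Lengths are unbounded only across different factorizations, while ACCP concerns a single chain. The intended route is a diagonal or König's-lemma argument on the tree of normalized partial products, using the compactness from Lemma~\ref{lem:divisorbound} to select infinitely many nested near-equal divisors. The delicate points are that the multiplier $h$ must not accumulate (perhaps by absorbing it once, as $hx$), and that successive quotients must be nonunits, so that the chain is strict.
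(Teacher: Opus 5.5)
\textbf{There is a genuine gap: the step that contradicts ACCP is never supplied.} Your reductions to the case $0<\alpha<1$ with no larger positive conjugate match the paper. So does your use of Lemma~\ref{lem:divisorbound} to place normalized partial products in a compact set. But the decisive step, producing an infinite strictly ascending chain of principal ideals, is exactly the ``main obstacle'' you flag, and the routes you sketch do not work.
\begin{itemize}
\item A K\"onig-type argument needs finite branching. That is unavailable here, since $x$ may have infinitely many nonassociate divisors. A single factorization is finite, so it cannot give an infinite chain by itself.
\item The alternative via high multiplicities is not justified.
\item Your application of Lemma~\ref{lem:neighborhoodmembership} points the wrong way. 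If $y\mid_{S_\alpha}y'$, then $z=y'/y$ already lies in $S_\alpha$, so ``$hz\in S_\alpha$'' carries no information. A chain needs $hz^{-1}\in S_\alpha$. That requires $z^{-1}\in\zz[\alpha]$, i.e., $z$ must be a \emph{unit} of $\zz[\alpha]$. ``Forcing the quotient to lie in $\zz[\alpha]$'' is automatic and beside the point.
\end{itemize}

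\textbf{The missing ideas.} Let $T:=(\zz[\alpha]^\times\cap S_\alpha)\setminus S_\alpha^\times$.
\begin{enumerate}
\item \emph{Split the factors.} Since $\zz[\alpha]$ is a BFD (Proposition~\ref{prop:Z-alpha-is-FFD}), any factorization of $x$ into nonunits of $S_\alpha$ has boundedly many factors that are nonunits of $\zz[\alpha]$. All remaining factors lie in $T$.
\item \emph{ACCP separates $T$ from $1$.} ACCP forces $\|t-1\|_\alpha\ge\varepsilon$ for all $t\in T$. Otherwise, take $(h,\varepsilon_0)$ from Lemma~\ref{lem:neighborhoodmembership}, fix $\delta$ with $e^\delta-1<\varepsilon_0$, and choose $t_n\in T$ with $\|t_n^{-1}-1\|_\alpha<\delta 2^{-n}$.
\begin{itemize}
\item The geometric decay keeps every $p_n^{-1}=(t_1\cdots t_n)^{-1}\in\zz[\alpha]$ within $\varepsilon_0$ of $1$.
\item So the single element $h$ gives $b_n:=hp_n^{-1}\in S_\alpha$ for all $n$.
\item Then $b_n=t_{n+1}b_{n+1}$ with $t_{n+1}$ a nonunit yields $b_0S_\alpha\subsetneq b_1S_\alpha\subsetneq\cdots$.
\end{itemize}
This is how the chain is built from near-$1$ elements harvested from \emph{different} factorizations without $h$ accumulating. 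The chain consists of divisors of $h$, not of $x$ or $hx$.
\item \emph{Bound the $T$-factors.} Your pigeonhole on normalized partial products of the $T$-factors now works. Two close normalized partial products $q_j,q_k$ give $q_jq_k^{-1}\in T$ with $\|q_jq_k^{-1}-1\|_\alpha<\varepsilon$, contradicting the separation. Hence the number of $T$-factors is bounded.
\end{enumerate}
Without the split into $\zz[\alpha]$-nonunits and $T$, and without the summable-product construction, your argument does not reach a contradiction with ACCP.
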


\begin{proof}
	(a) $\Rightarrow$ (b): This is true in general.
	\smallskip

	(b) $\Rightarrow$ (a): Assume that $S_\alpha$ satisfies the ACCP. If $\alpha = 0$ or a conjugate of $\alpha$ belongs to $\nn_0$ then $S_\alpha = \nn_0$. If $\alpha$ has no nonnegative real conjugate then $S_\alpha = \zz[\alpha]$ and Proposition~\ref{prop:Z-alpha-is-FFD} applies. Otherwise, replace $\alpha$ by its largest positive conjugate. If $\alpha > 1$ then $S^*_\alpha$ is isomorphic to the increasing positive monoid $\ln S_\alpha^*$ of the ordered field $\rr$, and so $S_\alpha$ is an FFS. Thus, we assume for the rest of the proof that $0 < \alpha < 1$. Now consider the set
	\[
		T := (\zz[\alpha]^\times \cap S_\alpha) \setminus S_\alpha^\times.
	\]
	
	We claim that there exists $\varepsilon > 0$ such that $\|t-1\|_\alpha \ge \varepsilon$ for all $t \in T$. Suppose otherwise, and choose $h$ and $\varepsilon_0$ from Lemma~\ref{lem:neighborhoodmembership}. Choose $\delta \in (0,1)$ with $e^\delta - 1 < \varepsilon_0$. Continuity of inversion at $1$ allows us to choose for each $n \in \nn$ a value $t_n \in T$ so that
    \[
        \|t_n^{-1}-1\|_\alpha < \frac{\delta}{2^n}.
    \]
    Set $p_0 := 1$ and $p_n := t_1 \cdots t_n$ for every $n \in \nn$. We show that all inverse partial products remain in the same neighborhood of~$1$. Fix $\sigma \in D_\alpha$ and put $z_j := \sigma(t_j^{-1}) - 1$, so that $|z_j|<\delta/2^j$. Since $\sigma$ is multiplicative, $\sigma(p_n^{-1})=\prod_{j=1}^n(1+z_j)$. Expanding this product and subtracting its constant term, then applying the triangle inequality to all remaining terms, gives
    \[
        \bigg|\prod_{j=1}^n(1+z_j)-1\bigg|
        \le \prod_{j=1}^n(1+|z_j|)-1
        \le \prod_{j=1}^n\left(1+\frac{\delta}{2^j}\right)-1.
    \]
    Now use $1+u\le e^u$ for $u\ge0$ and the geometric-sum identity $\sum_{j=1}^n2^{-j}=1-2^{-n}<1$ to obtain
    \[
        \prod_{j=1}^n\left(1+\frac{\delta}{2^j}\right)
        \le \exp\bigg(\delta\sum_{j=1}^n2^{-j}\bigg)
        =e^{\delta(1-2^{-n})}<e^\delta.
    \]
    Thus, $|\sigma(p_n^{-1})-1|<e^\delta-1<\varepsilon_0$, uniformly in $n$ and $\sigma$. Taking the maximum over $\sigma\in D_\alpha$ yields $\|p_n^{-1}-1\|_\alpha<\varepsilon_0$ for every $n\in\nn_0$, with $n=0$ immediate. Moreover, each $p_n^{-1}$ belongs to $\zz[\alpha]$, because every $t_j$ is a unit of this ring. We can therefore apply Lemma~\ref{lem:neighborhoodmembership} to every inverse partial product using the same element~$h$.
    Hence $b_i = hp_i^{-1} \in S_\alpha$ for every $i$, and $b_i=t_{i+1}b_{i+1}$ gives a strictly ascending chain $b_iS_\alpha\subsetneq b_{i+1} S_\alpha$, because $t_{i+1}$ is a nonunit. This contradicts the ACCP and proves the claim.
	\smallskip

	We are now in a position to argue that $S_\alpha$ is a BFS. For this, fix $x \in S_\alpha^*$. As $\zz[\alpha]$ is a BFD, there is a bound on the number of factors that are nonunits of $\zz[\alpha]$ in any factorization of $x$ into nonunit factors of $S_\alpha$. List the remaining factors as $t_1, \dots, t_m \in T$. Now, for each $j \in \ldb 1,m \rdb$, set $p_j := t_1\cdots t_j$ and observe that, as $p_j$ divides $x$ in $S_\alpha$, Lemma~\ref{lem:divisorbound} ensures the existence of $u_j \in S_\alpha^\times$ such that $q_j = p_ju_j$ and
    \[
        C^{-1} \le |\sigma(q_j)| \le C
    \]
    for all $\sigma \in D_\alpha$, where $C$ depends only on~$x$. The corresponding vectors belong to the compact product of complex annuli $\{(z_\sigma):C^{-1}\le |z_\sigma| \le C \}$. Cover this compact set by $N$ sets of diameter less than $\varepsilon/C$ in the maximum metric. If $m > N$ then two vectors, indexed by $j>k$, lie in the same set, so
    \[
        \left\|q_j q_k^{-1}-1\right\|_\alpha < \varepsilon.
    \]
    However,
    \[
        q_jq_k^{-1}=t_{k+1}\cdots t_j\,u_ju_k^{-1}\in T,
    \]
    contradicting the claim. Thus, $m\le N$, which bounds all factorization lengths. The ACCP implies atomicity, so $S_\alpha$ is a BFS.
\end{proof}

In light of Proposition~\ref{prop:S_alpha is an FFD if no + conj & an FFS if conj at least one}, if the generator $\alpha$ of a semidomain $S_\alpha$ has no positive conjugate or a positive conjugate that is at least $1$ then $S_\alpha$ has the finite factorization property. The following theorem provides a complete the characterization of the finite factorization property by addressing the generators $\alpha$ having all their positive conjugates inside the open interval $(0,1)$.

\begin{theorem} \label{thm:ffcharacterization}
    Let $\alpha \in (0,1)$ be an algebraic number with no positive conjugate larger than itself, let $m_\alpha(x)$ be the minimal polynomial of $\alpha$, and set
    \[
        g := \gcd\,\emph{supp}\,m_\alpha(x),\qquad
        \lambda:=\alpha^{-g},\qquad K:=\qq(\lambda).
    \]
    Let $e$ be the number of conjugates $\rho$ of $\alpha$ with $|\rho|>\alpha$, counting each nonreal conjugate pair once, and let $s$ be the number of denominator primes of $\alpha$ in $\mathcal O_{\qq(\alpha)}$. Then the following statements hold.
    \begin{enumerate}
    	\item[(1)] If $\lambda$ is not a Perron number 
		then $S_\alpha$ is an FFS if and only if $e+s \le 1$.
		\smallskip
    
		\item[(2)] If $\lambda$ is a Perron number then $S_\alpha$ is an FFS precisely in the following cases: 
		\begin{itemize}
			\item $g=1$;
			\smallskip

			\item  $g=2$, the field $K$ is totally real, and the ideals $\lambda \mathcal O_{\qq(\lambda)}$ and $\lambda \mathcal O_{\qq(\alpha)}$ have the same number of distinct prime divisors;
			\smallskip

			\item $g \ge 3$, the field $K$ equals $\Q$, and the ideals $\lambda \mathcal O_{\qq(\lambda)}$ and $\lambda \mathcal O_{\qq(\alpha)}$ have the same number of distinct prime divisors.
		\end{itemize} 
    \end{enumerate}
\end{theorem}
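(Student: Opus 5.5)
The plan is to reduce the finite factorization property to a discreteness statement about a lattice of logarithmic embeddings, and then count ranks.

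\textbf{Reduction to unit classes.} Fix $x\in S_\alpha^*$. By Proposition~\ref{prop:Z-alpha-is-FFD}, $x$ has only finitely many divisors in $\zz[\alpha]$ up to $\zz[\alpha]^\times$. Choose one divisor $d_0$ in $S_\alpha$ from each such class. Then every divisor of $x$ in $S_\alpha$ has the form $ud_0$ with $u\in\zz[\alpha]^\times$. Atomicity follows from divisor-finiteness, exactly as in the proof of the first proposition of this section, and \cite{fHK92} applies. So $S_\alpha$ is an FFS if and only if, for every such $d_0$, the set
\[
\{u\in\zz[\alpha]^\times : ud_0\mid_{S_\alpha}x\}
\]
meets only finitely many cosets of $S_\alpha^\times$.

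By Lemma~\ref{lem:divisorbound}, after adjusting by $S_\alpha^\times$ we may assume $C^{-1}\le|\sigma(ud_0)|\le C$ for all $\sigma\in D_\alpha$. So the vectors $(\log|\sigma(u)|)_{\sigma\in D_\alpha}$ lie in a fixed bounded box. Conversely, Lemma~\ref{lem:neighborhoodmembership} shows that unit classes close enough to $1$ in $\|\cdot\|_\alpha$, after multiplying by a fixed $h$, do land in $S_\alpha$. I therefore expect the following criterion. $S_\alpha$ is an FFS if and only if the image of
\[
\Gamma:=\zz[\alpha]^\times/S_\alpha^\times
\]
under the partial log map $L_D\colon u\mapsto(\log|\sigma(u)|)_{\sigma\in D_\alpha}$ is discrete with finite kernel. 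If the image is not discrete, I will produce infinitely many pairwise nonassociate divisors of the single element $h$ (or of $h^2$) by combining the two lemmas, as in the proof of Theorem~\ref{thm:bfiffaccp}.

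\textbf{Rank count (case (1)).} Next I compute $\rank\zz[\alpha]^\times$. Writing $\alpha=\omega/d$, the units of $\zz[\alpha]$ are commensurable with the $S'$-units of $\qq(\alpha)$, where $S'$ is the set of $s$ denominator primes. Theorem~\ref{thm:s unit theorem} then gives rank $r_1+r_2-1+s$. The full log map goes into the sum-zero hyperplane in $\rr^{\mathcal V}$. The places split into:
\begin{itemize}
\item[(i)] the $a$ archimedean places in $D_\alpha$;
\item[(ii)] the $e$ places with $|\rho|>\alpha$;
\item[(iii)] the $s$ denominator primes.
\end{itemize}
Since the full image is a lattice spanning the hyperplane, projecting onto the coordinates in (i) gives a discrete image if and only if $a\ge r_1+r_2-1+s$ and the projection is injective on the span. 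This amounts to $e+s\le1$. When $\lambda$ is not Perron, I expect $S_\alpha^\times$ to be trivial, or at least not to absorb any rank. I will justify this with Proposition~\ref{prop:nontrivial units when N_0[alpha] is not an ID} together with Boyd's theorem and \cite[Corollary~4.6]{CGLY26}, as in the proof of Lemma~\ref{lem:divisorbound}. Moreover, no two places in $D_\alpha$ are forced to have equal absolute values. So in this case the criterion is exactly $e+s\le1$, which is statement~(1).

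\textbf{The Perron case (2).} When $\lambda$ is Perron, $\alpha$ is a unit of $S_\alpha$ (the semidomain is additively antimatter). The conjugates $\alpha\zeta$ with $\zeta^g=1$ all lie in $D_\alpha$ and satisfy $|\sigma(\alpha)|=\alpha$, so the corresponding coordinates of $L_D$ coincide on powers of $\alpha$. Here I will:
\begin{itemize}
\item[(a)] quotient out $\langle\alpha\rangle\subseteq S_\alpha^\times$;
\item[(b)] use the structure $M_\alpha\cong M_{\alpha^g}^g$ to write the problem over $K=\qq(\lambda)$;
\item[(c)] compare unit ranks of $\zz[\alpha]$ and of the subring over $K$, in the spirit of~\eqref{eq:unit-rank-gap}.
\end{itemize}
For $g=1$, the only conjugate with $|\rho|=\alpha$ is $\alpha$ itself, and I expect the rank count to give discreteness automatically.

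For $g\ge2$, the extra rank equals the gap between unit (and $S$-unit) ranks of $\qq(\alpha)$ and $K$. This is a sum of contributions:
\begin{itemize}
\item archimedean ones, which vanish exactly when $g=2$ and $K$ is totally real, or when $g\ge3$ and $K=\qq$;
\item finite ones, which vanish exactly when $\lambda\mathcal O_K$ and $\lambda\mathcal O_{\qq(\alpha)}$ have the same number of distinct prime divisors.
\end{itemize}
When the extra rank is positive, I will build the density argument as in Proposition~\ref{prop: accp counterexample}: a dense subgroup of $\Delta$-values yields units $\varepsilon\in S_\alpha$ near $1$ in the coordinates of $D_\alpha$. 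This gives infinitely many nonassociate divisors of a fixed element.

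\textbf{Main obstacle.} The hardest step is this last bookkeeping in the Perron case. I must identify precisely which subgroup $S_\alpha^\times$ is (I expect $\langle\alpha\rangle$ times roots of unity) and show that the equal-modulus coordinates do not secretly restore discreteness. I would first try to handle $g=2$ with the explicit $\sigma$-involution argument above, and then generalize using the cyclic action $\rho\mapsto\zeta\rho$.
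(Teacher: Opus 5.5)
Your architecture matches the paper's. You reduce to cosets of $\zz[\alpha]^\times$ via Proposition~\ref{prop:Z-alpha-is-FFD}. You get sufficiency from Lemma~\ref{lem:divisorbound} and necessity from Lemma~\ref{lem:neighborhoodmembership} (via $h^2=(hq_j)(hq_j^{-1})$). You count coordinates in the sum-zero hyperplane $H_L\subseteq\rr^{\Sigma_L}$, where $L=\qq(\alpha)$ and $\Sigma_L$ consists of the archimedean places and the $s$ denominator primes.

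Part~(1) goes through this way once $S_\alpha^\times=\{1\}$ is known for non-Perron $\lambda$. Note that \cite[Corollary~4.6]{CGLY26} is the converse implication; the paper uses Propositions~\ref{prop:nontrivial units when N_0[alpha] is not an ID} and~\ref{prop:nontrivial unit group additive antimatter} with \cite[Proposition~3.4 and Theorem~5.6]{CGLY26}. The genuine gap is in part~(2).

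In the Perron case your criterion is not well posed. Since $\alpha\in S_\alpha^\times$ and $L_D(\alpha)\neq0$, the map $L_D$ does not descend to $\Gamma$. Moreover, $L_D(S_\alpha^\times)$ is in fact a subgroup of rank $|\Sigma_K|-1$ of the diagonal line, hence dense in it once $|\Sigma_K|\ge3$; here $\Sigma_K$ consists of the archimedean places of $K$ and the primes dividing $\lambda\mathcal O_K$. The paper instead works in $H_L$, where $\log(S_\alpha^\times)$ is a genuine lattice. It proves that $S_\alpha$ is an FFS if and only if $\ker\pi\subseteq W:=\operatorname{Span}_\rr\log(S_\alpha^\times)$, where $\pi$ is the projection onto the places in $D_\alpha$.

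More seriously, your guess $S_\alpha^\times=\langle\alpha\rangle\times(\text{roots of unity})$ is false. The missing idea is as follows. For $\eta=\alpha^g$, the monoid $(S_\eta,+)$ is antimatter with simple minimal polynomial, hence a valuation monoid by \cite[Theorem~5.6]{CGLY26}, so $S_\eta=\zz[\eta]\cap\rr_{\ge0}$. The normal form $S_\alpha=\bigoplus_{i<g}S_\eta\alpha^i$ and a no-cancellation argument then give $S_\alpha^\times=\{u\alpha^i:u\in\zz[\eta]^\times\cap\rr_{>0},\ 0\le i<g\}$, of rank $|\Sigma_K|-1$. This is exactly why $g=1$ works: there $W=H_L$. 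With your $S_\alpha^\times=\langle\alpha\rangle$ and $g=1$, you would get $W\cap\ker\pi=0$ and $\dim\ker\pi=|\Sigma_L|-2$. That wrongly predicts failure of the FFS property whenever $|\Sigma_L|\ge3$.

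Your rank bookkeeping for $g\ge2$ is also off. The gap between the $S$-unit ranks of $L$ and $K$ is $\sum_{v\in\Sigma_K}(t_v-1)$, where $t_v$ is the number of places of $L$ above $v$. This sum contains the term $t_{v_0}-1\ge1$ (for $g\ge2$) coming from the identity place $v_0$ of $K$. So its archimedean part never vanishes when $g\ge2$, contrary to your claim. The paper removes this term. By the Perron property, $\pi$ retains exactly the places above $v_0$, and $\dim\pi(W)=1$, whence
\[
\dim\big((\ker\pi+W)/W\big)=\sum_{v\in\Sigma_K\setminus\{v_0\}}(t_v-1).
\]
The three cases then come from $t_v=g$ at complex places and $t_v=\lceil g/2\rceil$ at real places $v\neq v_0$. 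The latter holds because $\tau(\eta)<0$ there, as $\alpha$ has no other positive conjugate. As written, your part~(2) states the target conditions rather than deriving them.
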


\begin{proof}
    Let $L$ and $\eta$ denote the field $\qq(\alpha)$ and the element $\alpha^g :=\lambda^{-1}$, so that $K=\qq(\lambda)=\qq(\eta)$. It follows from the definition of~$g$ that $m_\alpha(x) = f(x^g)$ for some simple monic polynomial $f \in \qq[x]$. The irreducibility of $m_\alpha(x)$ implies the irreducibility of $f(x)$, whence the equality $f(\eta)=0$ ensures that $f(x) = m_\eta(x)$. After comparing degrees, we obtain that $[L:K] = g$, so $x^g - \eta$ is the minimal polynomial of $\alpha$ over $K$. In particular, $1, \alpha, \dots, \alpha^{g-1}$ form a $K$-basis for~$L$.

    Let $S$ be the set of denominator primes of $\alpha$ in $\mathcal O_L$, and let $\Sigma_L$ consist of $S$ and the Archimedean places of $L$. For $a\in L^\times$, write $\log a:=(\log|a|_v)_{v\in\Sigma_L}$, using the product-formula normalization, and set
    \[
        H_L:=\Big\{(x_v)_{v\in\Sigma_L}\in\rr^{\Sigma_L}:
                    \sum_{v\in\Sigma_L}x_v=0\Big\}.
    \]
    The product formula puts the logarithmic image of every $S$-unit in $H_L$ as its absolute values at finite places outside $S$ are all~$1$.

    By the description of overrings of a Dedekind domain~\cite[Proposition~1.1]{lC65},
    \[
        \mathcal O_L[\alpha]
        =\bigcap_{\substack{\mathfrak q\text{ a nonzero prime of }\mathcal O_L\\
                   \mathcal O_L[\alpha]\subseteq(\mathcal O_L)_{\mathfrak q}}}
                   (\mathcal O_L)_{\mathfrak q}
        =\bigcap_{\mathfrak q\notin S}(\mathcal O_L)_{\mathfrak q}
        =\mathcal O_{L,S}.
    \]
    Here the second intersection also runs over nonzero prime ideals of $\mathcal O_L$. The middle equality follows because $\mathcal O_L[\alpha]\subseteq(\mathcal O_L)_{\mathfrak q}$ if and only if $v_{\mathfrak q}(\alpha) \ge 0$.

    If $\omega_1,\ldots,\omega_d$ form a $\zz$-basis of $\mathcal O_L$ then $\mathcal O_L[\alpha]=\omega_1\Za+\cdots+\omega_d\Za$. Thus, $\mathcal O_L[\alpha]$ is a finitely generated $\Za$-module, and the proof of Proposition~\ref{prop:Z-alpha-is-FFD} yields the following inequality:
    \[
        [\mathcal O_{L,S}^\times : \Za^\times] < \infty.
    \]
    The subgroup $\Za^\times\cap\rr_{>0}$ has index~$2$ in $\Za^\times$. Consequently, it follows from Theorem~\ref{thm:s unit theorem} that
    \[
        \Lambda := \log(\Za^\times\cap\rr_{>0})
    \]
    is a full lattice in $H_L$. Moreover, the logarithmic map is injective on positive elements of $L^\times$: its coordinate at the identity embedding is the ordinary real logarithm.

    Let $\mathcal R_\alpha$ be the set of Archimedean places of $L$ represented by embeddings in $D_\alpha$, and consider the map $\pi \colon H_L\rightarrow \rr^{\mathcal R_\alpha}$ defined as
    \[
        \pi \colon (x_v)_{v\in\Sigma_L} \mapsto (x_v)_{v\in\mathcal R_\alpha}.
    \]
    Put $W:=\operatorname{Span}_{\rr}\log(S_\alpha^\times)$ and $\Gamma:=\Lambda\cap W$. Since $\log(S_\alpha^\times)$ is a subgroup of the lattice $\Lambda$, it is discrete and is a full lattice in its real span~$W$. The same holds for $\Gamma$, so
    \[
        [\Gamma:\log(S_\alpha^\times)] < \infty.
    \]
	
    \noindent \textsc{Claim.} $S_\alpha$ is an FFS if and only if $\ker\pi\subseteq W$.
    \smallskip

    \noindent \textsc{Proof of Claim.} Suppose first that $\ker\pi\subseteq W$, and choose a linear complement $V$ of $W$ in $H_L$. Then $\pi(H_L)=\pi(V)\oplus\pi(W)$ and $\pi|_V \colon V \rightarrow \pi(V)$ is an isomorphism. Indeed, if $\pi(v) = \pi(w)$ with $v\in V$ and $w \in W$ then $v-w \in \ker \pi \subseteq W$, whence $v \in V \cap W = \{0\}$. Taking $w=0$ also proves injectivity of $\pi|_V$.

    Fix $x \in S_\alpha$. Proposition~\ref{prop:Z-alpha-is-FFD} and the divisor-finiteness characterization~\cite[Corollary~2]{fHK92} give only finitely many $\Za^\times$-associate classes of divisors of $x$ in $\Za$. Consider one that contains a divisor $d$ of $x$ in $S_\alpha$, and write its other divisors of $x$ in $S_\alpha$ as $da$, where $a\in\Za^\times\cap\rr_{>0}$. Lemma~\ref{lem:divisorbound} supplies, for each such $a$, a unit $u\in S_\alpha^\times$ for which $|\sigma(dau)|$ is bounded above and away from zero, uniformly over $a$ and $\sigma\in D_\alpha$. Since $d$ is fixed, $\pi(\log(au))$ lies in a fixed bounded set.

    Write $\log a=v+w$ with $v\in V$ and $w\in W$. Since $\log u\in W$, the $\pi(V)$-component of $\pi(\log(au))$ is $\pi(v)$. Linear projections and the inverse of $\pi|_V$ are continuous, so $v$ ranges over a bounded subset of~$V$. Choose a bounded fundamental parallelepiped $P$ for the lattice $\Gamma$ in $W$. For each $w$, choose $\gamma\in\Gamma$ such that $w-\gamma\in P$. Then
    \[
        \log a-\gamma=v+(w-\gamma)\in\Lambda
    \]
    belongs to a fixed bounded set, which contains only finitely many points of~$\Lambda$. Hence the possible $\log a$ occupy finitely many cosets modulo $\Gamma$, and the finite index $[\Gamma:\log(S_\alpha^\times)]$ gives finitely many cosets modulo $\log(S_\alpha^\times)$. Injectivity on positive units now gives finitely many $S_\alpha^\times$-associate classes of the divisors $da$. Applying this argument to the finitely many ring-associate classes shows that $S_\alpha$ is an FFS.

    Conversely, suppose that $\ker\pi\nsubseteq W$, and choose $\xi\in\ker\pi\setminus W$. A bounded fundamental parallelepiped for $\Lambda$ allows us to choose $a_n\in\Za^\times\cap\rr_{>0}$ such that $\log a_n-n\xi$ remains bounded. Since $\pi(\xi)=0$, the values $|\sigma(a_n)|$ are bounded above and away from zero for all $\sigma\in D_\alpha$. The vectors $(\sigma(a_n))_{\sigma\in D_\alpha}$ therefore lie in a compact product of closed annuli. After passing to a subsequence, we may assume that each $\sigma(a_{n_j})$ converges to a nonzero limit; by thinning this subsequence, we may also arrange that $n_{j+1}-n_j\to\infty$.

    Let $q_j:=a_{n_{j+1}}/a_{n_j}$. Both $q_j$ and its inverse belong to $\Za$, since the numerator and denominator are units of this ring. For every embedding $\sigma\in D_\alpha$, the numerator and denominator of $\sigma(q_j)$ approach the same nonzero limit, so both $\sigma(q_j)$ and $\sigma(q_j^{-1})$ approach~$1$. Moreover, the difference between $\log q_j$ and $(n_{j+1}-n_j)\xi$ remains bounded, since it is the difference of two terms of the bounded sequence $\log a_n-n\xi$. Choose a real linear functional $\ell$ on $H_L$ whose restriction to $W$ is the zero linear functional and $\ell(\xi) = 1$. Then $\ell(\log q_j) \to \infty$, and so a further subsequence has pairwise distinct logarithms modulo~$W$, and so pairwise distinct classes modulo $S_\alpha^\times$. Since $D_\alpha$ is finite, both $\|q_j-1\|_\alpha$ and $\|q_j^{-1}-1\|_\alpha$ tend to zero. Lemma~\ref{lem:neighborhoodmembership} therefore gives a single $h\in S_\alpha^*$ such that $hq_j, hq_j^{-1} \in S_\alpha$ for all sufficiently large~$j$. The factorizations
    \[
        h^2 = (hq_j)(hq_j^{-1})
    \]
    exhibit infinitely many pairwise nonassociate divisors of $h^2$. As a consequence, we conclude that $S_\alpha$ is not an FFS, which establishes the claim. 
    \smallskip

    We next determine when $\ker\pi\subseteq W$. By Propositions~\ref{prop:nontrivial units when N_0[alpha] is not an ID} and~\ref{prop:nontrivial unit group additive antimatter}, and \cite[Proposition~3.4 and Theorem~5.6]{CGLY26},
    \[
        S_\alpha^\times\ne\{1\}
        \quad\Longleftrightarrow\quad
        \lambda\text{ is a Perron number}.
    \]
    To check the positive-conjugate condition in the cited theorem, suppose that $\lambda$ is Perron and has another positive conjugate $\lambda'$. Then $0<\lambda'<\lambda$, so the positive root $(\lambda')^{-1/g}$ of $m_\alpha$ would be larger than $\alpha$, contrary to our hypothesis. Thus $\lambda$ has no other positive conjugate. Conversely, the cited equivalences make $S_\eta$ a simple antimatter monoid whenever $S_\alpha$ is additively antimatter, and hence make $\eta^{-1}=\lambda$ a Perron number.

    If $\lambda$ is not Perron, then $S_\alpha^\times=\{1\}$ and $W=\{0\}$. The coordinates omitted by $\pi$ are exactly the $e$ Archimedean places represented by conjugates of modulus greater than $\alpha$ and the $s$ denominator primes. Therefore
    \[
        \dim\ker\pi=\max\{e+s-1,0\}.
    \]
    Indeed, the omitted coordinates satisfy just the relation that their sum is zero; if there are no omitted coordinates, the kernel is already zero. The claim now shows that $S_\alpha$ is an FFS if and only if $e+s\le1$, proving~(1).

    Suppose henceforth that $\lambda$ is Perron. The same positive-conjugate argument shows that $\alpha$ has no other positive conjugate. The preceding equivalences give $\alpha\in S_\alpha^\times$ and make $S_\eta$ additively antimatter. Since $m_\eta$ is simple, \cite[Theorem~5.6]{CGLY26} makes $(S_\eta,+)$ a valuation monoid. Its difference group is $\zz[\eta]$; as $S_\eta\subseteq\rr_{\ge0}$, this implies
    \[
        S_\eta=\zz[\eta]\cap\rr_{\ge0}.
    \]
    Grouping powers of $\alpha$ by their residues modulo $g$ and using the $K$-basis established above gives the unique additive normal form
    \[
        S_\alpha=\bigoplus_{i=0}^{g-1}S_\eta\alpha^i.
    \]
    These normal forms also give
    \[
        S_\alpha^\times
        =\{u\alpha^i:u\in S_\eta^\times,\ 0\le i<g\},
        \qquad [S_\alpha^\times:S_\eta^\times]=g.
    \]
    Indeed, $\eta$ is a unit of $S_\eta$ by Proposition~\ref{prop:nontrivial unit group additive antimatter}. In the product of the normal forms of a unit and its inverse, positivity prevents cancellation. Every pair of nonzero coefficients, therefore, contributes to residue class~$0$. Fixing a nonzero coefficient in either factor forces the other factor's support to be a singleton, so both supports are singletons. The resulting coefficient equation has the form $uv\eta^k=1$, which forces $u,v\in S_\eta^\times$. Conversely, every displayed $u\alpha^i$ is a unit. Finally, if two cosets with $0\le i<j<g$ were equal, then $\alpha^{j-i}\in K$, contradicting the linear independence of $1,\alpha,\ldots,\alpha^{g-1}$ over~$K$. 

    Let $\Sigma_K$ consist of the Archimedean places of $K$ and the nonzero prime ideals dividing $\lambda\mathcal O_K$. For a nonzero prime ideal $\mathfrak q$ of $\mathcal O_L$ above a prime ideal $\mathfrak p$ of $\mathcal O_K$, normalized valuations satisfy $v_{\mathfrak q}|_{K^\times}=e(\mathfrak q/\mathfrak p)v_{\mathfrak p}$, where $e(\mathfrak q/\mathfrak p)>0$ is the ramification index. Since $\alpha^g=\lambda^{-1}$,
    \[
        g v_{\mathfrak q}(\alpha)
        =-e(\mathfrak q/\mathfrak p)v_{\mathfrak p}(\lambda).
    \]
    Hence $v_{\mathfrak q}(\alpha)<0$ if and only if $\mathfrak p\mid\lambda\mathcal O_K$. Together with the Archimedean places, this shows that $\Sigma_L$ is precisely the set of places of $L$ lying above those in $\Sigma_K$.

    For $v\in\Sigma_K$, let $t_v$ be the number of places of $L$ above $v$. Applying the overring and finite-unit-index arguments over $K$, with $\eta$ in place of $\alpha$, and then the $S$-unit theorem gives
    \[
        \rank S_\eta^\times = |\Sigma_K| - 1.
    \]
    Here, the denominator primes of $\eta=\lambda^{-1}$ are exactly the prime divisors of $\lambda\mathcal O_K$, and the equality $S_\eta = \zz[\eta] \cap \rr_{\ge0}$ gives $S_\eta^\times = \zz[\eta]^\times \cap \rr_{>0}$. The index calculation above and the injectivity of the logarithmic map on positive units, therefore, yield
    \[
        \dim W = \rank S_\alpha^\times=|\Sigma_K|-1.
    \]
    Let $v_0$ be the real place of $K$ represented by the identity embedding. For an embedding $\sigma:L\hookrightarrow\cc$ with restriction $\tau:=\sigma|_K$, we have
    \[
        |\sigma(\alpha)|^g=|\tau(\lambda)|^{-1}.
    \]
    If $\tau$ is the identity, then $|\sigma(\alpha)|=\alpha$; otherwise, the Perron inequality gives $|\sigma(\alpha)|>\alpha$. Thus, $\pi$ retains exactly the places above $v_0$. For $u=a\alpha^i\in S_\alpha^\times$, with $a\in S_\eta^\times$, each embedding above $v_0$ fixes $a$ and sends $\alpha$ to a number of modulus $\alpha$. Consequently, $\pi(\log u)$ is a scalar multiple of the vector with entry~$1$ at real places and entry~$2$ at complex places. Since $\alpha\in S_\alpha^\times$ and $\alpha\ne1$, its logarithmic image gives a nonzero such vector, and hence $\dim\pi(W)=1$.

    At least one coordinate is omitted by $\pi$. Indeed, if $K\ne\qq$, then $K$ has a real embedding and degree greater than one, so it has an Archimedean place other than $v_0$. If $K=\qq$, then the Perron number $\lambda$ is an integer greater than~$1$, and therefore has a prime divisor. The omitted coordinates are indexed by the places above $v\in\Sigma_K\setminus\{v_0\}$ and satisfy one independent zero-sum relation. Thus
    \[
        \dim \ker \pi = \sum_{v \in \Sigma_K \setminus \{v_0\}} t_v-1,
        \qquad
        \dim(W\cap\ker\pi) = \dim W - \dim \pi(W)= |\Sigma_K| - 2.
    \]

    The natural isomorphism $(\ker\pi+W)/W\cong\ker\pi/(W\cap\ker\pi)$ now gives
    \[
        \dim \big((\ker \pi + W)/W \big)
        = \sum_{v \in \Sigma_K \setminus \{v_0\}}(t_v - 1).
    \]
    Every place of $K$ extends to $L$, so each $t_v\ge1$. The claim therefore shows that $S_\alpha$ is an FFS if and only if $t_v=1$ for every $v \in \Sigma_K \setminus \{v_0\}$.

    To interpret the finite-place condition, let $\mathcal P_K$ and $\mathcal P_L$ be the sets of distinct nonzero prime ideals dividing $\lambda\mathcal O_K$ and $\lambda\mathcal O_L$, respectively. The valuation calculation above shows that $\mathcal P_L$ is the disjoint union of the sets of primes above $\mathfrak p\in\mathcal P_K$. Hence
    \[
        |\mathcal P_L|-|\mathcal P_K|
        = \sum_{\mathfrak p\in\mathcal P_K}(t_{\mathfrak p}-1).
    \]
    Thus $t_{\mathfrak p}=1$ for every $\mathfrak p\in\mathcal P_K$ if and only if the two principal ideals have the same number of distinct prime divisors.

    It remains to count the Archimedean places above $v\ne v_0$. Choose an embedding $\tau:K\hookrightarrow\cc$ representing $v$. Since $x^g-\eta$ is the minimal polynomial of $\alpha$ over $K$, there are exactly $g$ embeddings $\sigma:L\hookrightarrow\cc$ extending $\tau$, corresponding to the distinct roots of $x^g-\tau(\eta)$. If $v$ is complex, complex conjugation pairs each embedding above $\tau$ with an embedding above $\bar\tau$. Each complex place above $v$ contains exactly one embedding above $\tau$, so $t_v=g$.

    If $v \ne v_0$ is real, then $\tau(\eta) < 0$. Otherwise the positive $g$th root of $\tau(\eta)$ would be a positive conjugate of $\alpha$ distinct from $\alpha$, contrary to the uniqueness already established. For even $g$, the polynomial $x^g-\tau(\eta)$ has no real roots and gives $g/2$ complex places. For odd $g$, it has one real root and $(g-1)/2$ nonreal conjugate pairs. Hence in either case
    \[
        t_v=\lceil g/2\rceil.
    \]

    For $g=1$, we have $L=K$, so every place count is one and the finite-place condition is automatic. For $g=2$, every other real place has one extension, whereas a complex place has two; thus the Archimedean condition is equivalent to $K$ being totally real. For $g\ge3$, every Archimedean place other than $v_0$, if present, has at least two extensions. The condition is therefore equivalent to $v_0$ being the only Archimedean place of $K$. Because $v_0$ is real, the signature formula then gives $[K:\qq]=1$, or $K=\qq$. Combining these alternatives with the finite-place condition proves~(2).
\end{proof}

We conclude this section with another characterization of the finite factorization property over the class of algebraic monogenic semidomains. This characterization was first established by the second author, Hong, and Li in a project that is still in progress.

\begin{prop}
    Let $\alpha$ be an algebraic number, and set
    \[
        U_\beta := \{ u \in \zz[\alpha]^\times : u\beta, u^{-1}\beta \in S_\alpha \}.
    \]
    for every $\beta \in S_\alpha^*$. Then $S_\alpha$ is an FFS if and only if $U_\beta/S_\alpha^\times$ is finite for every nonzero $\beta \in S_\alpha$.
\end{prop}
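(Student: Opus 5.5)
The plan is to prove both directions through the divisor-finiteness characterization of FFMs from~\cite{fHK92}: an atomic monoid is an FFM if and only if every element has only finitely many divisors up to associates. Since Proposition~\ref{prop:Z-alpha-is-FFD} shows that $\zz[\alpha]$ is an FFD, the only new information the sets $U_\beta$ must carry is how the units of $\zz[\alpha]$ move divisors around inside $S_\alpha$. Two preliminary observations will be used throughout.
\begin{itemize}
\item For every $\beta \in S_\alpha^*$, the set $U_\beta$ is stable under multiplication by $S_\alpha^\times \subseteq \zz[\alpha]^\times$. Hence $U_\beta/S_\alpha^\times$ makes sense as a set of cosets $uS_\alpha^\times$.
\item For $u,u' \in \zz[\alpha]^\times$ and $\beta\ne 0$, the elements $u\beta$ and $u'\beta$ are associates in $S_\alpha$ if and only if $u/u' \in S_\alpha^\times$. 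This follows by cancelling $\beta$ in the field $\qq(\alpha)$.
\end{itemize}

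For the forward implication, assume $S_\alpha$ is an FFS and fix $\beta\in S_\alpha^*$. For each $u\in U_\beta$ we have the factorization
\[
\beta^2=(u\beta)(u^{-1}\beta)
\]
in $S_\alpha$, so $u\beta$ divides $\beta^2$. By~\cite{fHK92}, $\beta^2$ has only finitely many divisors up to associates. By the second observation, the map $uS_\alpha^\times\mapsto$ (associate class of $u\beta$) is well defined and injective. Therefore $U_\beta/S_\alpha^\times$ is finite.

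For the converse, assume every $U_\beta/S_\alpha^\times$ is finite and fix $x\in S_\alpha^*$.
\begin{enumerate}
\item Every divisor of $x$ in $S_\alpha$ is also a divisor of $x$ in $\zz[\alpha]$. Since $\zz[\alpha]$ is an FFD, these divisors fall into finitely many $\zz[\alpha]^\times$-associate classes.
\item Fix one such class that meets $S_\alpha$, and choose in it a representative $d_0\in S_\alpha$ with $x=d_0c_0$ and $c_0\in S_\alpha$.
\item Any other divisor $d$ of $x$ in $S_\alpha$ lying in the same class has the form $d=ud_0$ with $u\in\zz[\alpha]^\times$ and $x=dc$ for some $c\in S_\alpha$. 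Cancelling gives $c=u^{-1}c_0$.
\item Then both
\[
ux=(ud_0)c_0\in S_\alpha \qquad\text{and}\qquad u^{-1}x=d_0(u^{-1}c_0)\in S_\alpha,
\]
so $u\in U_x$. This is the key trick: the single element $\beta=x$ witnesses membership for all divisors at once.
\item By the second observation, the $S_\alpha$-associate classes of divisors in this $\zz[\alpha]$-class inject into $U_x/S_\alpha^\times$, which is finite by hypothesis. Summing over the finitely many $\zz[\alpha]$-classes shows that $x$ has finitely many divisors up to associates in $S_\alpha$.
\end{enumerate}

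Atomicity then follows exactly as in the proof of the first proposition of this section. A nonunit with no factorization into atoms would produce an infinite sequence of pairwise nonassociate divisors $b_0,b_1,\dots$ of itself, with each $b_{n+1}$ a proper divisor of $b_n$. This contradicts divisor-finiteness, so~\cite{fHK92} yields that $S_\alpha$ is an FFS.

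I do not expect a serious obstacle. The step needing the most care is step~4: one must notice that the correct choice of $\beta$ is $x$ itself, rather than $d_0$ or $c_0$. Otherwise the membership conditions $u\beta\in S_\alpha$ and $u^{-1}\beta\in S_\alpha$ do not both follow.
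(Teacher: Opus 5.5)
Your proposal is correct and follows essentially the same route as the paper. The forward direction uses the same injection $uS_\alpha^\times \mapsto u\beta$ into the divisor classes of $\beta^2$. The converse likewise reduces to the finitely many $\zz[\alpha]$-associate classes of divisors and places each relevant unit $u$ into a single set $U_\beta$. The differences are cosmetic: you take the witness $\beta = x$ (via $ux=(ud_0)c_0$ and $u^{-1}x=d_0(u^{-1}c_0)$), whereas the paper uses $\beta = d_i x$ separately for each class, and you check atomicity explicitly where the paper leaves it to the divisor-finiteness characterization of~\cite{fHK92}.
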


\begin{proof}
    For the direct implication, assume that $S_\alpha$ is an FFS. Fix $\beta \in S_\alpha^*$ and let us argue that $U_\beta/S_\alpha^\times$ is finite. Let $\mathcal{D}_{\beta^2}$ denote the $S_\alpha$-associate classes of the set of divisors of $\beta^2$ in $S_\alpha$. Define the map $U_\beta/S_\alpha^\times \to \mathcal{D}_{\beta^2}$ by $uS_\alpha^\times \mapsto u\beta$. To see that this is a valid definition, it suffices to observe that for each $u \in U_\beta$ the divisibility relation $u \beta \mid_{S_\alpha} \beta^2$ holds because $u\beta \in S_\alpha$ and $\beta^2/u\beta = u^{-1}\beta \in S_\alpha$. In addition, as $\beta \neq 0$ the defined map is injective. As $S_\alpha$ is an FFS, $\mathcal{D}_{beta^2}$ is finite, and the injectivity of the map $U_\beta/S_\alpha^\times \to \mathcal{D}_{\beta^2}$ ensures that the orbit set $U_\beta/S_\alpha^\times$ is also finite.

    For the other direction, assume that the quotient $U_\beta/S_\alpha^\times$ is a finite set for all $\beta \in S_\alpha^*$. Then fix $\gamma \in S_\alpha$. Proposition~\ref{prop:Z-alpha-is-FFD} gives finitely many $\zz[\alpha]$-associate classes of divisors of $\gamma$ in $\zz[\alpha]$: let them be $d_1, \dots, d_\ell \in S_\alpha$. Let $d$ be another such divisor$d = d_i u$, with $u \in \zz[\alpha]^\times$, write $b=dc$ with $c\in S_\alpha$. For $a_i=d_i b$, we have
    \[
        a_i u = bd \in S_\alpha,
        \qquad a_i u^{-1} = d_i^2c\in S_\alpha.
    \]
    Hence $u \in U_{a_i}$. Each $S_\alpha^times$-orbit of such units contributes at most one associate class of divisors. Hence every $b$ has finitely many divisors up to associates, which proves that $S_\alpha$ is an FFS.
\end{proof}

The following example shows that there exist infinitely many positive $\alpha < 1$ of degree $3$ so that $S_\alpha$ is an FFS but not an integral domain.

\begin{example}
	Fix $t \in \zz$ with $t \ge 5$, and consider the polynomial $m_t(x) := x^3 + tx - 1$. A brief look at its derivative shows that $m_t(x)$ has a unique real root: $m_t'(x) = 3x^2+t > 0$ for all $x \in \rr$. Let $\alpha$ be the unique real root of $m_t(x)$, and notice that $\alpha > 0$ because $m_t(0) = -1 < 0$. On the other hand, $0 < m_t(1/t) = 1/t^3$, from which we conclude that $0 < \alpha < 1/t \le 1/5$. The rational root theorem shows that $m_t(x)$ is irreducible over $\qq$, and so $[\qq(\alpha):\qq] = 3$. Its other two roots form a nonreal conjugate pair, which we denote by $\beta$ and $\overline{\beta}$.

	We first determine the units of $S_\alpha$. The identity $\alpha(\alpha^2+t)=1$ shows that $\alpha$ is a unit of $\zz[\alpha]$. Since the order $\zz[\alpha]$ has signature $(1,1)$, Dirichlet's Unit Theorem gives
	\[
		\zz[\alpha]^\times \cong \{\pm1\} \times \zz.
	\]
	Let $u>1$ generate the group of positive units of $\zz[\alpha]$. Since $0 < \alpha < 1$ is a positive unit, there exists $m \in \nn$ such that $\alpha = u^{-m}$. Suppose, by way of contradiction, that $m \ge 2$. Then $u^m = \alpha^{-1} = t+\alpha^2$. As $t < \alpha^{-1} < t+1$ and $m \ge 2$, we obtain
	\[
		1 < u \le \sqrt{\alpha^{-1}} < \sqrt{t+1} < t.
	\]

	Write $\beta=\rho e^{i\theta}$, where $\rho>0$ and $\theta \in (0, \pi)$. Since $N(\alpha) = 1$, we see that $\rho=|\beta|=\alpha^{-1/2}$. Vieta's formulas give $\alpha + \beta + \overline{\beta} = 0$, and therefore
	\[
		\rho\cos\theta = -\frac{\alpha}{2}
		\qquad\text{and}\qquad
		\rho\sin\theta = \sqrt{\alpha^{-1}-\frac{\alpha^2}{4}}.
	\]
	Write $u = a+b \alpha + c\alpha^2$ for some $a,b,c \in \zz$, and set $u_\beta := a+b\beta + c\beta^2$. As $u$ is a unit, $|N(u)|=1$, and hence $1=|N(u)|=u|u_\beta|^2$. Thus, $|u_\beta| = u^{-1/2} < 1$. Direct calculation gives
	\[
		\Re(u_\beta) = a-\frac{\alpha}{2}b+\left(\frac{\alpha^2}{2}-\alpha^{-1}\right)c
		\qquad\text{and}\qquad
		\Im(u_\beta) = (b-c\alpha)\rho\sin\theta.
	\]
	It follows that
	\[
		|b-c\alpha|
		\le \frac{|u_\beta|}{\rho\sin\theta}
		\le \frac{1}{\sqrt{t-1/(4t^2)}}
		< \frac12.
	\]
	Consequently, $|b|<|c|\alpha+\frac12$. In particular, if $|c|\le1$, then $|b|<1$, and so $b=0$.

	We now show that $|c|\ge2$ is impossible. Substituting $a=u-b\alpha-c\alpha^2$ into the expression for $\Re(u_\beta)$ yields
	\[
		\Re(u_\beta)=u-\frac{3\alpha}{2}b - \left(\alpha^{-1}+\frac{\alpha^2}{2}\right)c.
	\]
	Using $u<t$, $\alpha^{-1}=t+\alpha^2$, and $|b|<|c|\alpha+1/2$, we obtain
	\begin{align*}
		|\Re(u_\beta)| \ge |c|\left(\alpha^{-1} + \frac{\alpha^2}{2}\right) - u - \frac{3\alpha}{2}|b| > (|c|-1)t-\frac{3\alpha}{4} > 1,
	\end{align*}
	contrary to $|\Re(u_\beta)|\le|u_\beta|<1$. Hence $|c|\le1$, and the preceding paragraph gives $b=0$. Therefore
	\[
		u = a, \qquad u = a+\alpha^2, \qquad\text{or}\qquad u=a-\alpha^2
	\]
	for some $a\in\zz$.

	If $u=a$ then $1=|N(u)|=|a|^3$, contradicting $u>1$. If $u=a+\alpha^2$, then $1\le a\le t-1$, and so
	\[
		|u_\beta|=|a+\beta^2|
		\ge\big||\beta|^2-a\big|
		=\alpha^{-1}-a>1,
	\]
	again a contradiction. Finally, if $u = a-\alpha^2$, then $a \ge 2$, and
	\[
		|u_\beta| \ge |\Re(u_\beta)|
		= a + \alpha^{-1} - \frac{\alpha^2}{2} > 1,
	\]
	which is also impossible. Therefore $m=1$, and the positive units of $\zz[\alpha]$ are precisely the powers of~$\alpha$.

	Since $\alpha^{-1}=t+\alpha^2\in S_\alpha$, every integral power of $\alpha$ belongs to $S_\alpha$. Conversely, every unit of $S_\alpha$ is a positive unit of $\zz[\alpha]$. Thus,
	\[
		S_\alpha^\times=\{\alpha^j : j \in \zz \} \qquad\text{and} \qquad [\zz[\alpha]^\times : S_\alpha^\times] = 2.
	\]
	By Proposition~\ref{prop:Z-alpha-is-FFD}, the domain $\zz[\alpha]$ is an FFD. Hence every nonzero element of $S_\alpha$ has only finitely many divisor classes in $\zz[\alpha]$, and each such class splits into at most two associate classes in $S_\alpha$. The divisor-finiteness characterization~\cite{fHK92} now shows that $S_\alpha$ is an FFS. Since $S_\alpha\subseteq\rr_{\ge0}$, we have $-1\notin S_\alpha$, and so $S_\alpha$ is not an integral domain. Finally, the identity $t=\alpha^{-1}-\alpha^2$ shows that distinct values of $t$ yield distinct parameters~$\alpha$.
	\hfill $\blacksquare$
\end{example}

\medskip
\subsection{The Unique Factorization Propoerty}

In this last section we will take a look at the unique factorization property.

\begin{prop}\label{prop:rational elements of Z beta}
    Let $\beta$ be any algebraic number, let $n\in\nn$, and let $M=\omega_\beta\in\zz[x]$. Let $\delta$ be the positive generator of $(n\zz[x]+M\zz[x])\cap\zz$. Then, for every $k\in\zz$,
    \[
        \frac{k}{n}\in\zz[\beta]\quad\Longleftrightarrow\quad\delta\mid k.
    \]
    In particular, $1/n\in\zz[\beta]$ if and only if $n\zz[x]+M\zz[x]=\zz[x]$. If $\beta$ has no nonnegative real conjugate, $\zz[\beta]$ may be replaced throughout by $S_\beta$.
\end{prop}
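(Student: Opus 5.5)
The idea is to pull everything back to $\zz[x]$ through the presentation $\zz[\beta]\cong\zz[x]/(M)$ used in the proof of Proposition~\ref{prop:Z-alpha-is-FFD}. The kernel of evaluation at $\beta$ is $M\zz[x]$ because $M=\omega_\beta$ is primitive and Gauss's lemma applies. Set $I:=n\zz[x]+M\zz[x]$. Then $I\cap\zz$ is an ideal of $\zz$; it contains $n$, so it is nonzero and $\delta$ is well defined.

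For the forward direction, suppose $k/n=f(\beta)$ with $f\in\zz[x]$. Then $nf(\beta)-k=0$, so $nf-k\in M\zz[x]$. Hence $k=nf-(nf-k)\in I\cap\zz=\delta\zz$.

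For the converse, it suffices to treat $k=\delta$. By definition, $\delta\in I$, so we can write $\delta=na+Mb$ with $a,b\in\zz[x]$. Evaluating at $\beta$ kills the second term and gives $\delta=na(\beta)$, so $\delta/n=a(\beta)\in\zz[\beta]$. Any multiple of $\delta$ over $n$ is then an integer multiple of $a(\beta)$, and this closes the equivalence.

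For the ``in particular'' statement, take $k=1$. Then $1/n\in\zz[\beta]$ if and only if $\delta=1$, which happens if and only if $1\in I$, i.e., $I=\zz[x]$. For the last sentence, Proposition~\ref{prop:monogenic semidomains that are ID} gives $S_\beta=\zz[\beta]$ whenever $\beta$ has no nonnegative real conjugate, so the replacement is immediate.

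The only point that needs care is identifying the kernel of evaluation with $M\zz[x]$. A rational multiple of $m_\beta$ lies in $\zz[x]$ only if it is an integer multiple of the primitive $\omega_\beta$, so this is again Gauss's lemma as stated in the preliminaries. I do not expect any genuine obstacle beyond this.
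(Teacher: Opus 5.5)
Your proof is correct and follows the paper's argument essentially verbatim. You identify the kernel of evaluation at $\beta$ with $M\zz[x]$ via Gauss's lemma, translate $k/n=f(\beta)$ into $k\in(n\zz[x]+M\zz[x])\cap\zz$, and invoke Proposition~\ref{prop:monogenic semidomains that are ID} for the semiring version; the only difference is that you split the paper's chain of equivalences into two directions and write $\delta=na+Mb$ explicitly.
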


\begin{proof}
    Since $n$ belongs to the intersection ideal, its positive generator exists and divides $n$. Gauss's lemma identifies the kernel of evaluation at $\beta$ with $M\zz[x]$. Therefore
    \begin{align*}
        k/n\in\zz[\beta]
        &\Longleftrightarrow nf(\beta)=k\text{ for some }f\in\zz[x]\\
        &\Longleftrightarrow k\in(n\zz[x]+M\zz[x])\cap\zz\\
        &\Longleftrightarrow\delta\mid k.
    \end{align*}
    The assertion for $1/n$ follows by setting $k=1$, and the semiring assertion follows from Proposition~\ref{prop:monogenic semidomains that are ID}.
\end{proof}

\begin{prop}\label{prop:localization preserves UFD}
    Let $\alpha\in\aaa$, $n\in\nn$, $\beta=\alpha/n$, and $M=\omega_\beta$. If
    \[
        n\zz[x]+M\zz[x]=\zz[x],
    \]
    then $\zz[\beta]=\zz[\alpha][1/n]$. Consequently, if $\zz[\alpha]$ is a UFD, so is $\zz[\beta]$. If $\alpha$ has no nonnegative real conjugate, the same assertions hold with $S_\alpha$ and $S_\beta$ in place of these rings.
\end{prop}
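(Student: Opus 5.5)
The plan is to prove the ring equality $\zz[\beta]=\zz[\alpha][1/n]$ by two inclusions, using Proposition~\ref{prop:rational elements of Z beta} to get $1/n$ into $\zz[\beta]$. Then deduce the UFD statement from the standard fact that localizations of UFDs are UFDs. The semiring version then reduces to the ring version via Proposition~\ref{prop:monogenic semidomains that are ID}.

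\emph{Step 1: $1/n\in\zz[\beta]$.} The hypothesis $n\zz[x]+M\zz[x]=\zz[x]$ with $M=\omega_\beta$ is exactly the condition in the ``in particular'' clause of Proposition~\ref{prop:rational elements of Z beta}. Applying it to $\beta$ gives $1/n\in\zz[\beta]$.

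\emph{Step 2: the two inclusions.}
\begin{itemize}
\item Since $\alpha=n\beta\in\zz[\beta]$ and $1/n\in\zz[\beta]$, the subring $\zz[\alpha][1/n]$ generated by $\alpha$ and $1/n$ lies in $\zz[\beta]$.
\item Conversely, $\beta=\alpha\cdot(1/n)\in\zz[\alpha][1/n]$, so $\zz[\beta]\subseteq\zz[\alpha][1/n]$.
\end{itemize}
Hence $\zz[\beta]=\zz[\alpha][1/n]$, which is the localization of $\zz[\alpha]$ at the multiplicative set $\{n^k:k\in\nn_0\}$.

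\emph{Step 3: the UFD assertion.} I will invoke, or briefly reprove, the standard fact that a localization $R_T$ of a UFD $R$ at a multiplicative set $T$ is a UFD. Write a nonzero element of $R_T$ as $r/t$ and factor $r=u p_1\cdots p_k$ into primes of $R$. Each $p_i$ that divides some element of $T$ becomes a unit in $R_T$. Each remaining $p_i$ stays prime in $R_T$: if $p_i\mid (a/s)(b/s')$ in $R_T$, clearing denominators gives $p_i\mid abt'$ in $R$ for some $t'\in T$, and since $p_i\nmid t'$ we get $p_i\mid a$ or $p_i\mid b$. So every nonzero nonunit of $R_T$ is a product of primes, and $R_T$ is a UFD. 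Apply this with $R=\zz[\alpha]$ and $T=\{n^k\}$.

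\emph{Step 4: the semiring version.} The conjugates of $\beta=\alpha/n$ are exactly the conjugates of $\alpha$ divided by $n>0$. Hence $\alpha$ has no nonnegative real conjugate if and only if $\beta$ has none. In that case Proposition~\ref{prop:monogenic semidomains that are ID} gives $S_\alpha=\zz[\alpha]$ and $S_\beta=\zz[\beta]$, and both assertions transfer verbatim to the semirings.

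I do not expect a real obstacle here. The only step needing care is Step 1, where one must apply Proposition~\ref{prop:rational elements of Z beta} to $\beta$ rather than to $\alpha$, since the hypothesis is stated in terms of $\omega_\beta$. The localization fact in Step 3 is standard and can simply be cited if preferred.
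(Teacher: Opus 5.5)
Your proposal is correct and follows the paper's proof essentially step for step. You get $1/n\in\zz[\beta]$ from Proposition~\ref{prop:rational elements of Z beta} applied to $\beta$, prove the two inclusions giving $\zz[\beta]=\zz[\alpha][1/n]$, use that localizations of UFDs are UFDs, and note that dividing by $n>0$ preserves having a nonnegative real conjugate, so Proposition~\ref{prop:monogenic semidomains that are ID} transfers everything to $S_\alpha$ and $S_\beta$. Your added sketch of the localization fact is fine; it tacitly needs that a prime dividing no element of $T$ stays a nonunit in $R_T$, which holds because $p_i c = s\in T$ would contradict that choice.
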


\begin{proof}
    Proposition~\ref{prop:rational elements of Z beta} gives $1/n\in\zz[\beta]$. Also $\alpha=n\beta\in\zz[\beta]$, so $\zz[\alpha][1/n]\subseteq\zz[\beta]$. The reverse inclusion follows from $\beta=\alpha/n\in\zz[\alpha][1/n]$. Localizations of UFDs are UFDs. The final assertion follows because positive scaling preserves the existence of a nonnegative real conjugate.
\end{proof}

\begin{prop}\label{prop:unconditional-UFD-ascent}
    Let $\alpha$ be an algebraic integer. If $\zz[\alpha]$ is a UFD, then $\zz[\alpha/n]$ is a UFD for every $n\in\nn$. Consequently, if $S_\alpha$ is a UFD, then $S_{\alpha/n}$ is a UFD for every $n\in\nn$.
\end{prop}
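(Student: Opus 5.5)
The idea is to reduce to a localization statement, as in Proposition~\ref{prop:localization preserves UFD}, but without assuming the coprimality condition $n\zz[x]+\omega_\beta\zz[x]=\zz[x]$.

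\emph{Step 1: $\zz[\alpha]$ is a PID.} Set $R:=\zz[\alpha]$. Since $\alpha$ is an algebraic integer, $R$ is an order, so it is Noetherian of Krull dimension one. A Noetherian UFD of dimension one is a PID: every nonzero prime ideal contains a prime element $\pi$, and $(\pi)$ is already maximal because the dimension is one. So $R$ is a PID.

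\emph{Step 2: rewrite $\beta$ as a reduced fraction.} Since $\beta=\alpha/n\in\operatorname{Frac}(R)$ and $R$ is a UFD, we can cancel common prime factors of $\alpha$ and $n$ in $R$. This gives $\beta=a/b$ with $a,b\in R$, $b\ne 0$, and $\gcd_R(a,b)=1$. Concretely, one may take $b=n/\gcd_R(\alpha,n)$ and $a=\alpha/\gcd_R(\alpha,n)$.

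\emph{Step 3: Bezout gives a localization.} Because $R$ is a PID, $\gcd_R(a,b)=1$ yields $u,v\in R$ with $ua+vb=1$. Dividing by $b$ gives
\[
	\frac1b = u\beta+v \in \zz[\beta],
\]
using that $R=\zz[\alpha]\subseteq\zz[\beta]$, since $\alpha=n\beta$. Conversely, $\beta=a\cdot\frac1b\in R[1/b]$. Hence
\[
	\zz[\beta]=R[\beta]=R[1/b],
\]
which is a localization of the UFD $R$ at the multiplicative set generated by $b$. Localizations of UFDs are UFDs, so $\zz[\alpha/n]$ is a UFD.

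\emph{Step 4: the semidomain statement.} For this I would use Proposition~\ref{prop:monogenic semidomains that are ID} together with the remark at the end of Proposition~\ref{prop:localization preserves UFD}. Positive scaling does not change whether some conjugate lies in $\rr_{\ge0}$. So if $\alpha$ has no nonnegative real conjugate, then $S_\alpha=\zz[\alpha]$ and $S_{\alpha/n}=\zz[\alpha/n]$, and the ring statement transfers directly. The case where $\alpha$ has a nonnegative real conjugate must be checked separately, and I expect it to be the main obstacle. The difficulty is that $S_\alpha$ being factorial there is very restrictive. For instance, if $\alpha\in\nn$ then $S_{\alpha/n}$ is the rational semidomain $S_{\alpha/n}$, which by~\cite{DGZ26} is factorial only when $\alpha/n\in\nn\cup\nn^{-1}$. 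So in that case I would have to read the corollary as applying in the ring setting, or restrict to $\alpha$ with no nonnegative real conjugate.

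Of the ring-theoretic steps, the one needing the most care is Step 1: the upgrade from UFD to PID is what makes the Bezout identity available. Without it, $R[a/b]$ need not be a localization, as the example $\zz[x][x/2]$ shows.
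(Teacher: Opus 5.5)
Your proof of the ring statement is correct and follows the paper's proof. You write $\alpha/n=a/b$ with $a,b$ coprime in the PID $R=\zz[\alpha]$, use a B\'ezout relation to get $1/b\in\zz[\alpha/n]$, and conclude $\zz[\alpha/n]=R[1/b]$. The only variation is in upgrading UFD to PID. The paper notes that a UFD is integrally closed, so $R=\mathcal O_K$ is Dedekind, and a Dedekind UFD is a PID. Your dimension-one argument shows that every nonzero prime ideal is principal. To reach PID you still need one more sentence: either Kaplansky's theorem that a ring whose prime ideals are principal is a PID, or the Dedekind factorization of ideals into these principal primes.

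The gap is in the semidomain sentence, which you leave open. You treat the case where $\alpha$ has a nonnegative real conjugate as the main obstacle, but that case cannot occur. In this paper the term UFD is reserved for semidomains that are integral domains, so the hypothesis that $S_\alpha$ is a UFD already says that $S_\alpha$ is a ring. The implication (a)$\Rightarrow$(b) of Proposition~\ref{prop:monogenic semidomains that are ID} then shows that no conjugate of $\alpha$ lies in $\rr_{\ge0}$. The conjugates of $\alpha/n$ are those of $\alpha$ divided by $n$, so the same holds for $\alpha/n$. Hence $S_\alpha=\zz[\alpha]$ and $S_{\alpha/n}=\zz[\alpha/n]$, and your ring statement applies directly; this is the paper's one-line argument. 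Your example with $\alpha\in\nn$ (e.g.\ $\alpha=2$, $n=3$: $S_2=\nn_0$ is factorial while $S_{2/3}$ is not) is correct. What it shows, however, is that the second sentence would be false if ``UFD'' were read as ``unique factorization semidomain''. Under the paper's terminology that case is excluded by the hypothesis itself, so you should use this to close the argument rather than leave a case open.
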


\begin{proof}
    Set $R=\zz[\alpha]$ and $K=\qq(\alpha)$. Since $R$ is a UFD, it is integrally closed in $K$. Every element of $\mathcal O_K$ is integral over $R$, while $R\subseteq\mathcal O_K$; hence $R=\mathcal O_K$. A Dedekind UFD is a PID: its nonzero prime ideals are generated by prime elements, and every nonzero ideal is a product of such ideals.

    Write $\beta=\alpha/n=a/b$ with $a,b\in R$, $b\ne0$, and $a,b$ relatively prime in this PID. There exist $x,y\in R$ with $ax+by=1$. As $R\subseteq\zz[\beta]$ because $\alpha=n\beta$, we obtain
    \[
        b^{-1}=\beta x+y\in\zz[\beta].
    \]
    Therefore $R[1/b]\subseteq\zz[\beta]$. Conversely, $\beta=a/b\in R[1/b]$, so $\zz[\beta]=R[1/b]$, which is a UFD. If $S_\alpha$ is a UFD, it equals $R$, and Proposition~\ref{prop:monogenic semidomains that are ID} gives $S_\beta=\zz[\beta]$ as well.
\end{proof}

\begin{example}\label{example: i/2 unit group}
    We proceed to argue that the group of units of the monoid $\nn_0[i/2]$ can be generated by the set $\{i+1,2\}$, which has minimal size.

    We will first find all the elements of the unit group $\nn_0[i/2]^{\times}$. The primitive minimal polynomial of $i/2$ is $M(x) = 4x^2+1$, and
    \[
        1 = M(x)-2(2x^2).
    \]
    Therefore Proposition~\ref{prop:rational elements of Z beta} yields $1/2\in \nn_0[i/2]$, meaning $2^k \in \nn_0[i/2]^{\times}$ for all $k\in\zz$. Note that every element of $\nn_0[i/2]$ can be written in the form $(a+bi)/2^k$ for $a,b,k\in\zz$. Without loss of generality, it suffices to consider only the case where $k=0$ and $2$ does not divide at least one of $a$ or $b$. Then, if we find that some $u\in\nn_0[i/2]$ of this form is a unit, then $u\cdot 2^n$ will be units for all $n\in \zz$, which includes our original element $(a+bi)/2^k$. We will form two cases: either one of $a$ or $b$ is $0$, or both $a,b\neq 0$.
	\smallskip

	\textsc{Case 1:} $a$ or $b$ is $0$. We only need to consider the case where $a$ or $b$ is an odd integer. In either case, the reciprocal is an element of $\nn_0[i/2]$ only if $a$ or $b$ is $\pm 1$, which means that $\pm 2^n$ and $\pm 2^ni$ are the only units given by this case.
	\smallskip

    \textsc{Case 2:} $a,b\neq 0$. We can assume that $2\nmid \gcd(a,b)$. Now if $u=a+bi$ is a unit, then $1/u=(a-bi)/(a^2+b^2)\in\nn_0[i/2]$. Suppose for the sa                                                ke of contradiction that $\gcd(a,b)\neq 1$, so some odd prime $p$ divides $\gcd(a,b)$. Then, once the fraction $(a-bi)/(a^2+b^2)$ is simplified, the denominator would contain a power of $p$ that is $\nu_p(a^2+b^2)-\min(\nu_p(a),\nu_p(b))\geq 2\min(\nu_p(a),\nu_p(b)) - \min(\nu_p(a),\nu_p(b))$, which takes on a positive value when $p\mid \gcd(a,b)$. Since having an odd prime in the denominator would imply $1/u\not\in\nn_0[i/2]$, we must therefore have $\gcd(a,b)=1$. From here, we see that $a^2+b^2=2^n$ for some $n\geq 1$. Since we know that $a$ and $b$ are not both even, they both must be odd due to the parity of $2^n$. Then $a^2+b^2\equiv2\pmod 4$, which forces $n=1$. Thus, the only possible units this case gives are $\pm 2^n(i+1)$ and $\pm2^n(i-1)$ for $n\in\zz$, and it is easy to check that these are indeed units.

    Now, it is not hard to see that $i+1$ and $2$ generate $\nn_0[i/2]^{\times}$. We can first get all $2^n$, $n\in\zz$ using $2$. Then, we have $2^{-1}\cdot (i+1)^2=i$, and $i\cdot (i+1)=i-1$. Therefore, we can form all  $\pm 2^n(i+1)$, $\pm2^n(i-1)$, $\pm2^n$, $\pm2^ni$.

    To show that two generators are necessary, note that
    \[
        \nn_0[i/2]=\zz[i,1/2],\qquad
        \nn_0[i/2]^\times=\langle i\rangle\times\langle1+i\rangle
        \cong C_4\times\zz.
    \]
    Indeed, $2=-i(1+i)^2$, and the preceding list of units gives the claimed product. Its factors intersect trivially because $|1+i|>1$. This infinite group has a nontrivial torsion element $i$, so it cannot be cyclic. Hence the displayed two-element generating set $\{1+i,2\}$ has minimal size.

    Furthermore, we can note that $i+1$ is an atom of $\nn_0[i]$ but a unit in $\nn_0[i/2]$.
	\hfill $\blacksquare$
\end{example}

We conclude with an explicit nonrational algebraic monogenic semidomain that is an FFS and admits factorizations into atoms of different lengths.

\begin{example}
	Let $\alpha:=1/\sqrt{2}$. As observed in Example~\ref{ex:units of S alpha sqrt2}, the semidomain $S_\alpha$ is not a ring and
	\[
		S_\alpha^\times=\langle\alpha\rangle=\langle\sqrt{2}\rangle\cong\zz.
	\]
	We proceed to show that $S_\alpha$ is an FFS that is not half-factorial. For this, let $R$ denote the ring $\zz[1/2,\sqrt{2}]$, and then consider the following elements of $R$:
	\[
		\varepsilon:=1+\sqrt{2},\qquad
		\pi:=3+\sqrt{2},\qquad \text { and } \qquad
		\overline{\pi}:=3-\sqrt{2}.
	\]
	The ring $R$ is a localization of the Euclidean domain $\zz[\sqrt{2}]$ at the multiplicative set of powers of~$2$, whence $R$ is a UFD. Moreover,
	\[
		R^\times=\langle-1\rangle\times\langle\sqrt{2}\rangle\times\langle\varepsilon\rangle,
	\]
	while $\pi$ and $\overline{\pi}$ are non-associate primes of~$R$ satisfying $\pi\overline{\pi}=7$.

	Let $\sigma$ be the nontrivial automorphism of $\qq(\sqrt{2})$. For a positive element $x=a+b\sqrt{2}\in R$, one has $x\in S_\alpha$ if and only if $a,b\ge0$, or equivalently, if and only if $x\ge|\sigma(x)|$.

	We first prove that $S_\alpha$ is an FFS. Fix a nonzero $x\in S_\alpha$. Since $R$ is a UFD, the divisors of $x$ in~$R$ lie in finitely many associate classes. Fix one such class containing a divisor $d\in S_\alpha$ of~$x$, and write $x=dc$ with $c\in S_\alpha$. Every divisor of $x$ in $S_\alpha$ that is associate to $d$ in~$R$ has the form
	\[
		(\sqrt{2})^n\varepsilon^m d
	\]
	for some $m,n\in\zz$. Since $(\sqrt{2})^n\in S_\alpha^\times$, its associate class in $S_\alpha$ has a representative of the form $\varepsilon^m d$. For every positive $y\in R$, set
	\[
		\lambda(y):=\frac{\log\big(y/|\sigma(y)|\big)}{2\log\varepsilon}.
	\]
	The membership criterion above gives
	\[
		\varepsilon^m d\in S_\alpha
		\quad\Longleftrightarrow\quad
		m\ge-\lambda(d),
	\]
	while the corresponding cofactor belongs to $S_\alpha$ precisely when
	\[
		\varepsilon^{-m}c\in S_\alpha
		\quad\Longleftrightarrow\quad
		m\le\lambda(c).
	\]
	Thus, only finitely many integers $m$ can occur. Therefore each associate class of divisors of~$x$ in~$R$ contains only finitely many associate classes of divisors in~$S_\alpha$. It follows that $x$ has only finitely many divisors up to associates in $S_\alpha$. The divisor-finiteness characterization of finite factorization monoids~\cite{fHK92} now shows that $S_\alpha$ is an FFS.

	We now show that $S_\alpha$ is not half-factorial. After disregarding powers of the unit~$\sqrt{2}$, the element $\varepsilon^m\pi^i\overline{\pi}^{\,j}$ belongs to $S_\alpha$ precisely when
	\[
		m+\theta(i-j)\ge 0, \qquad \text{where}
		\qquad
		\theta:=\frac{\log(\pi/\overline{\pi})}{2\log\varepsilon}.
	\]
	The inequalities $\pi-\varepsilon\overline{\pi}=2-\sqrt{2} > 0$ and $\varepsilon^4\overline{\pi}^{\,3}-\pi^3=24+18\sqrt{2} > 0$ show that $1/2 < \theta \in (1/2, 2/3)$. Now consider the following elements:
	\begin{align*}
		u_1&:=10+\sqrt{2}=\sqrt{2}\,\varepsilon^{-1}\pi^2,
		&u_2&:=6+19\sqrt{2}=\sqrt{2}\,\varepsilon^2\overline{\pi}^{\,3},\\
		u_3&:=7\sqrt{2}=\sqrt{2}\,\pi\overline{\pi},
		&u_4&:=1+2\sqrt{2}=\varepsilon\overline{\pi}.
	\end{align*}
	The exponent triples $(m,i,j)$  of these four elements are $(-1,2,0)$,$(2,0,3)$, $(0,1,1)$, and $(1,0,1)$, respectively. Here a factor has $i,j\in\nn_0$ and $m\in\zz$. Its least allowed $m$ is $\lceil-\theta(i-j)\rceil$; the zero triple corresponds to an $S_\alpha$-unit. For $u_1$, a split of its two $\pi$ factors requires $m\ge0+0$, exceeding $-1$. For $u_2$, a split of its three $\overline\pi$ factors requires $m\ge1+2=3$, exceeding $2$. For $u_3$, separating $\pi$ and $\overline\pi$ requires $m\ge0+1$, exceeding $0$. The element $u_4$ has only one prime factor in $R$. In each of the four cases the displayed $m$ is already minimal, so a factor with $i=j=0$ and $m\ge1$ is also impossible. Since $R$ is a UFD, these exhaust the possible decompositions into two nonunits in $S_\alpha$. Thus all four elements are atoms. 
	Finally,
	\[
		u_1u_2=(10+\sqrt{2})(6+19\sqrt{2})
		=98+196\sqrt{2}
		=(7\sqrt{2})^2(1+2\sqrt{2})=u_3^2u_4.
	\]
	Hence the same element has a factorization of length~$2$ and a factorization of length~$3$. Consequently, $S_\alpha$ is an FFS that is not half-factorial and so no factorial.
\end{example}





\bigskip
\section*{Acknowledgments}

This paper is the result of a collaboration carried out while the authors where part of the first CrowdMath Internship, and the authors would like to thank the CrowdMath organizers and directors for making this research opportunity possible. While working on this paper, the first author was kindly supported by the NSF awards DMS-2213323.

\bigskip

\end{document}